%% file: current_version__4_.tex
\documentclass[reqno]{amsart}
\usepackage[T1]{fontenc}  
\usepackage[utf8]{inputenc} 
\input{packages}

\input{commands}

\input{theoremstyle}

\begin{document}

\title{The logarithmic leaf complex and foliated d-semistability}

\author[M. Corr\^ea]{ Maur\'icio Corr\^ea }
\address[M. Corr\^ea]{Dipartimento di Matematica, Universit\`a degli Studi di Bari,  Via E. Orabona 4, I-70125, Bari, Italy
}
\email{mauricio.correa.mat@gmail.com, mauricio.barros@uniba.it}

\author[ P. Perrella]{  Pablo Perrella }
\address[P. Perrella]{Departamento de Matem\'atica, Facultad de Ciencias Exactas y Naturales, Universidad de Buenos Aires and IMAS (UBA-CONICET), Buenos Aires, Argentina.
}

\email{pperrella@dm.uba.ar }

\author[ S. Velazquez]{ Sebasti\'an Velazquez }
\address[S. Velazquez]{Department of Mathematics, King’s College London, Strand, London WC2R 2LS, United Kingdom}

\email{sebastian.velazquez@kcl.ac.uk}

\subjclass[2020]{14D15,37F75,14D20.}

\keywords{Holomorphic foliations,Logarithmic geometry, Moduli spaces, Deformation theory}

\begin{abstract} 
We study holomorphic foliations on normal crossings varieties arising as semistable degenerations. We do so by we exploring the notion of foliated $d$-semistability using the language of  logarithmic structures in the sense of Fontaine–Illusie. First, we  identify both local and global obstructions to $d$-semistability. In order to analyze the existence of smoothings, we develop a logarithmic deformation theory of foliations, and show that the corresponding moduli functor admits a versal hull.
\end{abstract}

\maketitle

\section{Introduction}

The main goal of this paper is to introduce the notions of $d$-semistability and log smoothness in the context of foliated pairs and their moduli.

In light of recent advances in the moduli theory of foliations \cite{spicer2025moduli}, it has become increasingly clear that a satisfactory understanding of foliations on  varieties with normal crossings and their deformation theory is essential, even if one was primarily interested in foliations on smooth varieties, since these objects arise naturally at the boundary of moduli problems. Moreover, given a family of foliated pairs $(\X,\F)\to \Delta$ whose general fiber is smooth, the Semistable Reduction Theorem produces, after a finite base  $\Delta'\to \Delta$, a birational model
\begin{center}
    \begin{tikzcd}
        \widetilde{\X}\arrow[r]\arrow[d] &\arrow[d] \X\times_\Delta \Delta' \\
        \Delta' \arrow[r]& \Delta'
    \end{tikzcd}
\end{center}
with smooth total space, reduced normal crossings central fiber, and agreeing with the pulled-back family away from the central fiber. Endowing this new model with the pullback foliation, one is naturally led to study the induced foliated structure on the central fiber and its \emph{smoothing} \smash{$(\widetilde{X},\widetilde\F)\to \Delta$}. In this way, the problem of understanding families whose generic fiber is a smooth variety naturally leads to the study of \say{smoothings} of foliations on normal crossing varieties.
This motivates the following: 

\textbf{Question:}\, \,which foliations on varieties with normal crossings admit smoothings?

In the non-foliated setting, this question was first addressed in \cite{friedman1983global}, where the notion of $d$-semistability was introduced. It is not difficult to see that a variety admitting a smoothing as above is necessarily $d$-semistable; however, determining when the converse holds has led to a substantial body of interesting mathematics. These ideas were later further developed through the lens of logarithmic geometry (see \cite{kawamata1994logarithmic,kato1988logarithmic_str,kato1996dlog_def}), which has proven to be particularly well-suited to this problem.

Loosely speaking, logarithmic geometric 
can be viewed as an enriched version of algebraic geometry in which one equips a ringed space $\underline{X}$ with a sheaf of monoids $\M_X$ and a morphism
$$ \alpha:\M_X\to \OO_X,$$
where the latter is regarded as a sheaf of monoids under multiplication. When no confusion arises, we will use the notation $\underline{X}$ for the underlying space and reserve the symbol $X$ for the logarithmic space $(\underline{X},\M_X)$. This notion was first introduced in \cite{kato1988logarithmic_str}, and gives rise to a geometric category closely parallel to that of classical algebraic geometry.
In this setting, there is a notion of log smoothness which behaves, in many respects, analogously to the usual notion of smoothness, allowing one to extend a wide range of results and geometric intuition to more singular spaces.
In the case of a variety with normal crossings, the existence of certain log smooth morphisms $X\to (\ast, \N)$ to the standard log point (see Example \ref{ex: log point}) is equivalent to $\underline{X}$ being $d$-semistable, thereby illustrating the convenience of this approach. 

To a morphism of logarithmic spaces $X\to S$ one can associate a relative tangent sheaf $T_{X/S}$, endowed with a Lie bracket and a forgetful morphism of sheaves of Lie algebras $T_{X/S}\to T_{\underline{X}/\underline{S}}$.
In the case of a semistable morphism $X\to (\ast,\N)$, the forgetful map is an injection and hence it makes sense to say that a foliation $\F$ on $\underline{X}$ is $d$-semistable if $T_{\F}\subseteq T_{X/(\ast,\N)}\subseteq T_{\underline{X}}$.
It is important to point out that, while varieties with normal crossings are automatically \say{locally semistable} \cite[Prop. 11.3]{kato1988logarithmic_str}, this is no longer the case once a foliation is added. Indeed, we show that local semistability imposes strong compatibility conditions on the foliations defined on the irreducible components.

The paper is organized as follows. We begin by recalling the notions from logarithmic geometry that will be needed in what follows.

Section 3 is devoted to the analysis of $d$-semistability. On the one hand, we exhibit different instances of local $d$-semistability and establish a compatibility relation for the Camacho–Sad indices of the induced foliations on the irreducible components.
With this in place, we turn to the study of global $d$-semistability. We show that the obstruction to global $d$-semistability lies in the cohomology group
$$H^1\big(\underline{X},\OO^\ast_{\underline{X}/\F}/G_\F\big),$$
where \smash{$\OO^\ast_{\underline{X}/\F}$} is the sheaf of invertible first integrals of $\F$ and $G_\F$ the kernel of the morphism $\OO^\ast_{\underline{X}/\F}\to \OO_D^\ast$. Among other consequences, this shows that if the double locus is a leaf of $\F$, then the obstruction to $d$-semistability is of topological nature.

In Section 4 we develop  a log-smooth deformation theory for singular log foliations. Inspired by the works of \cite{kato1996dlog_def,gomez1988transverse}, we show that the deformation theory of foliations on log smooth varieties is governed by a logarithmic analogue of the leaf complex and prove that versal hulls exist, provided that $\underline{X}$ is proper.

Combining the results of Sections 3 and 4, we construct families of foliations that admit smoothings and lay the groundwork for a systematic study of this problem.
\medskip

\textbf{Acknowledgements} The first author was supported by CONICET, Argentina. 
MC was supported by the PRIN 2022MWPMAB ``Interactions between Geometric Structures and Function Theories'', and by INdAM–GNSAG. The third author was supported by EPSRC, United Kingdom. Part of this work was carried out during a visit of the first author to King's College London, funded by the London Mathematical Society, to whom we are very grateful.

\section{Preliminaries}
\subsection*{Conventions}

Along the way all monoids are assumed to be commutative and will have a neutral element. We will denote by $P^\gp$ the Grothendieck group of $P$. We write $\N$ for additive monoid of non-negative integer. 

\begin{definition}
A monoid $P$ is said to be
\begin{enumerate}
    \item \textit{finitely generated} if there is a surjective homomorphism $\N^k\rightarrow P$ for some $k\geqslant 1$,
    \item \textit{integral} if the natural homomorphism $P\rightarrow P^\gp$ is injective,
    \item \textit{saturated} if $P$ is integral and for each $x\in P^\gp$ such that $x^k\in P$ for some integer $k\geqslant 1$, then $x\in P$.
\end{enumerate}
\end{definition}

In order to simplify verbose statements, from now on all monoids will be finitely generated, integral and saturated. 

On the other hand, all our spaces will be complex analytic and the sheaves on them will be considered with respect to their analytic topology.

\subsection{Logarithmic structures}
We will begin by introducing the general notions on logarithmic geometry necessary for our purposes. For more complete expositions, the reader is referred to \cite{kato1988logarithmic_str, kato1996dlog_def, abramovich2010logarithmic}.

\begin{definition} \label{def:logscheme}
A \textit{pre-log structure} on a complex analytic space $\underline{X}$ is a sheaf of monoids $\M_X$ on $\underline{X}$ and a multiplicative morphism of sheaves $\alpha: \M_X\rightarrow \OO_{\underline{X}}$. If the restriction $\alpha^{-1}(\OO^\ast_{\underline{X}}) \rightarrow\OO^\ast_{\underline{X}}$ is an isomorphism, we will say that $\M_X$ is a \textit{log structure}. The pair $(\underline{X},\M_X)$ will be called a \textit{log space}, and will be denoted by $X$ to distinguish it from its underlying space $\underline{X}$. We shall use the notation $\OO_X$ instead of $\OO_{\underline{X}}$ for simplicity.
\end{definition}

\begin{definition}\label{def:morphism}
    A \emph{morphism of log spaces} $f:X\rightarrow Y$ consists of a morphism between the underlying complex analytic spaces $f: \underline{X}\rightarrow \underline{Y}$ together with a morphism of sheaves $f^\flat: f^{-1} \M_Y\rightarrow \M_X$ making the diagram
\begin{center}
    \begin{tikzcd}
        f^{-1}\M_Y\arrow[r]\arrow[d] &\arrow[d] \M_X \\
        f^{-1}\OO_Y \arrow[r]& \OO_X
    \end{tikzcd}
\end{center}
commute. Two log structures $\M_X$ and $\M'_X$ on $\underline{X}$ are \textit{equivalent} if there exists an isomorphism of log spaces $(\underline{X},\M_X)\to (\underline{X},\M'_X)$, i.e. an isomorphism of log spaces witch is the identity on $\underline{X}$.
\end{definition}

\begin{example}
Complex analytic spaces $\underline{X}$ can be naturally be viewed a log spaces. In fact, the inclusion $\M_{X} :=\OO_{\underline{X}}^\ast\subseteq \OO_{X}$ is a log structure.
\end{example}

\begin{example}\label{ex:log_str_div}
Given a hypersurface $D$ on a complex manifold $\underline{X}$, the subsheaf of monoids $\M_{(X,D)}\subseteq\OO_X$ given by those local functions $f$ that do not vanish at any point of $\underline{X}\setminus D$ is a log structure. We will denote the corresponding log space by $(X,D)$.
\end{example}

Given a pre-log structure $\alpha:\M_X\rightarrow \OO_X$ we can construct its \textit{associated log structure} $\M_X^a\rightarrow \OO_X$ as the log structure obtained as the push-out diagram

\begin{center}
\begin{tikzcd}
\alpha^{-1}(\OO_X^\ast) \arrow[r] \arrow[d]          & \M_X \arrow[d] \arrow[rdd, bend left] &       \\
\OO_X^\ast \arrow[r] \arrow[rrd, bend right] & \M^a_X \arrow[rd]                     &       \\
                                                     &                                             & \OO_X.
\end{tikzcd}
\end{center}

This procedure allow us to construct new  logarithmic structures in the following way. Given a monoid $P$, a homomorphism $P\rightarrow \Gamma(\underline{X}, \OO_{\underline{X}})$ induces a pre-log structure $P_{X}\rightarrow \OO_{\underline{X}}$ where $P_X$ is a constant sheaf. Therefore we obtain an associated log structure $(P_X)^a\rightarrow \OO_X$ as before.

\begin{example}\label{ex: log point}
Given a monoid $Q$, the homomorphism $\alpha: Q\rightarrow \C$ defined by $\alpha(x) = 0$ for each $1\neq x\in Q$, and $\alpha(1) = 1$ produces a logarithmic structure on the point $\ast = \Spec \C$ that will be denoted by $(\ast, Q)$.  The \textit{standard log point} is the log space $(\ast,\N)$. As a matter of fact, any log structure on $\ast$ is equivalent to one of this form \cite{kato1988logarithmic_str}.
\end{example}

\begin{example}\label{ex: ss_log_model}
This will be the leading log structure of this work. The homomorphism $\N^r\rightarrow \C\{x_1,\dots,x_n\}/(x_1\cdots x_r)$ defined on the canonical basis of $\N^r$ by $e_i \mapsto \overline{x}_i/u_i$ for some units $u_i$ defines a log space $X$ whose underlying space is the simple normal crossing germ $\underline{X} = \Spec \C\{x_1,\dots,x_n\}/(x_1\cdots x_r)$.
\end{example}

\begin{example}\label{ex: monoid_log_str}
Given a commutative analytic ring $R$ and a monoid $P$, there is a canonical homomorphism $P\rightarrow R[P]$ to the monoid algebra $R[P]$ that produces a log structure on $\underline{X}=\Spec R[P]$. 
\end{example}

\begin{example}\label{ex: toric_log_str}
Every toric variety $\underline{X}_\Sigma$ associated to a fan $\Sigma \subseteq N \otimes_{\mathbb{Z}} \mathbb{R}$ carries a natural logarithmic structure $\mathcal{M}_{X_\Sigma}$. Indeed, this variety admits an open cover by affine toric varieties of the form $U_\sigma = \Spec \mathbb{C}[M \cap \sigma^\vee]$, where $\sigma \in \Sigma$ and $M = \Hom(N,\mathbb{Z})$ is the dual lattice. The logarithmic structures on each $U_\sigma$, induced by the monoid algebras as in the previous example, glue to define $\mathcal{M}_{X_\Sigma}$.
\end{example}

\begin{definition}
A \textit{chart} of a log space $X$ is a morphism $P\rightarrow \Gamma(X,\M_X)$ with $P$ a monoid such that the associated map $(P_X)^a\rightarrow \M_X$ of log structures is an isomorphism. If $X$ admits local charts at around each point we will say that $X$ is a \textit{fine log space}.
\end{definition}

All the examples of log spaces presented so far are fine log spaces. Consider for instance the log space $(X,D)$ of induced by a divisor as in Example \ref{ex:log_str_div}. Locally on $\underline{X}$, the hypersurface $D$ can be described as the zero locus of some prime elements $f_1,\dots, f_k\in \Gamma(U,\OO_{X})$. The morphism $\N^k \rightarrow \Gamma(U,\OO_X)$ induced by those functions produce the desired chart on $\M_{(X,D)}|_U$.

\begin{definition}
Let $f: \underline{X}\rightarrow \underline{Y}$ be a morphism of complex analytic spaces. Given a log structure $\M_Y$ on $\underline{Y}$, we the define the \textit{pullback} $f^\ast\M_Y$ as the log structure on $\underline{X}$ associated to the pre-log structure $f^{-1}\M_Y\rightarrow f^{-1}\OO_Y\rightarrow \OO_X$. A morphism $f: X \rightarrow Y$ of log spaces is \textit{strict} if $f^{\flat}: f^\ast\M_Y\rightarrow\M_X$ is an isomorphism.
\end{definition}

\subsection{Semistable logarithmic structures} In his seminal paper \cite{friedman1983global}, Friedman studied what properties inherit a semistable degeneration of smooth varieties. He recognized that the special fiber should satisfy certain topological and analytical constraints. Hence he introduced the notion of \textit{d-semistablity} and proved that d-semistable K3 surfaces are semistable degenerations. With a deformation theoretical approach, Kawamata and Namikawa extended this result to d-semistable Calabi-Yau varieties \cite{kawamata1994logarithmic}. They recognized that $d$-semistable varieties are, in terms of logarithmic geometry, those having a special type of logarithmic structure (see Theorem \ref{thm: semistability_T1}
 below).

\begin{definition}\label{def: semistable str}
Let $\underline{X}$ be a variety with normal crossings. A logarithmic structure $\M_X$ on $\underline{X}$ is of \textit{semistable type} if there exists a morphism of log spaces $f:(\underline{X},\M_X)\rightarrow (\ast,\N)$ such that locally on $X$ we can find charts $\N^r\rightarrow \Gamma(U,\M_X)$ making the diagram
\begin{center}
\begin{tikzcd}
\N \arrow[r] \arrow[d] & {\Gamma(\ast,\M_{(\ast,\N)})} \arrow[d, "f^\flat"] \\
\N^r \arrow[r]         & {\Gamma(U,\M_X)}                              
\end{tikzcd}
\end{center}
\end{definition}
commute, with the left vertical arrow being the diagonal map. Such morphism $f$ will be called a \textit{semistable map}. We will say that $\underline{X}$ is \textit{d-semistable} if it admits a logarithmic structure of semistable type.

From the definition itself, determining whether a given normal crossings variety admits a semistable logarithmic structure appears to be a subtle problem. Before moving on, we present a practical criterion to construct such structures in terms of the module \smash{$T^1_{\underline{X}} = \Ext^1(\Omega^1_{\underline{X}},\OO_{\underline{X}})$}. 

Let $\underline{X}=\Spec\C\{x_1,\dots,x_n\}/(x_1\dots x_r)$ be a normal 
crossing germ with singular locus $D$. Denoting $X_i=V(x_i)\subseteq (\C^n,0)$, we have that  the sheaves $I_{X_i}\vert_D$ are locally free on $D$ and generated by $\overline{x}_i$. Hence the sheaf 
$I_{X_1}\vert_D\otimes\dots\otimes I_{X_r}\vert_D$
is also locally free of rank one and generated by $\overline{x}_1\otimes \dots\otimes\overline{x}_r$. By a straightforward  computation \cite[Prop. 2.3]{friedman1983global}, there exists a canonical isomorphism $$T^1_{\underline{X}}\big\vert_D\simeq\big(I_{X_1}\vert_D\otimes\dots\otimes I_{X_r}\vert_D\big)^\vee.$$

\begin{theorem}[{{\cite[Prop. 1.1]{kawamata1994logarithmic}}}]{\label{thm: semistability_T1}} 
A normal crossing variety $\underline{X}$ with singular locus $D$ is d-semistable if and only if $T_{\underline{X}}^1\big\vert_D \simeq \OO_D$.    
\end{theorem}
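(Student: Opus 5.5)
The plan is to compare both conditions with a single local object: the set of semistable log structures on a neighbourhood of a point of $D$, together with how two such structures differ. In local coordinates $\underline{X}=\Spec\C\{x_1,\dots,x_n\}/(x_1\cdots x_r)$, every semistable log structure is given, as in Example \ref{ex: ss_log_model}, by a chart $e_i\mapsto \overline{x}_i/u_i$ with units $u_i$. Compatibility with the diagonal map $\N\to\N^r$ and with $(\ast,\N)$ forces $\prod_i \overline{x}_i/u_i$ to map to $0$, which holds automatically. The real constraint comes from how such a structure changes when we change the local equations $x_i\mapsto v_i x_i$. Step one is therefore to show that the local datum of a semistable structure is equivalent to a choice of units $u_1,\dots,u_r$ modulo the relevant equivalence. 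The intrinsic invariant of this choice is the element
\[
\sigma=(\overline{x}_1/u_1)\otimes\cdots\otimes(\overline{x}_r/u_r)\in I_{X_1}\vert_D\otimes\cdots\otimes I_{X_r}\vert_D ,
\]
which is a local generator of that invertible sheaf on $D$.

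Step two is to show that $\sigma$ determines the semistable structure up to equivalence, and that any generator arises this way. Given two charts, $\sigma$ and $\sigma'$ differ by the unit $\prod u_i'/u_i$ restricted to $D$. We want an isomorphism of log structures which is the identity on $\underline{X}$ and commutes with the maps to $(\ast,\N)$. Such an isomorphism should exist exactly when the restriction of $\prod u_i'/u_i$ to $D$ equals $1$, because the monoid elements $\overline{x}_i/u_i$ can be rescaled individually by units only subject to their product being fixed on the double locus. To prove this, I will check that any unit on $D$ extends to a unit on $\underline{X}$ that is $1$ on all but one component. I will then verify by hand that the log structure attached to a chart depends only on the classes of the $u_i$ modulo the relation above. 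This is a local computation with pushouts of $\N^r$ and $\OO^\ast$. I expect it to be the main obstacle: one has to be careful that equivalence of log structures over $(\ast,\N)$, as opposed to equivalence of the charts themselves, really is governed exactly by the restriction to $D$.

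Step three is to globalise. If $\underline{X}$ is d-semistable, the global semistable structure produces the local sections $\sigma$ on a cover. By step two, which makes them intrinsic and independent of the chosen chart, they glue to a nowhere-vanishing global section of $I_{X_1}\vert_D\otimes\cdots\otimes I_{X_r}\vert_D$. Dually, via the canonical isomorphism $T^1_{\underline{X}}\vert_D\simeq(I_{X_1}\vert_D\otimes\cdots\otimes I_{X_r}\vert_D)^\vee$ of \cite[Prop. 2.3]{friedman1983global}, we get $T^1_{\underline{X}}\vert_D\simeq\OO_D$. Conversely, a trivialisation gives a global generator $\sigma$. On each chart we choose units $u_i$ realising $\sigma$ locally, which is possible by the extension of units in step two. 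On overlaps the two local semistable structures have the same $\sigma$, so step two gives canonical isomorphisms between them. Because these isomorphisms are canonical (unique), the cocycle condition holds automatically, and the local structures glue to a global log structure of semistable type.

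One subtlety needs checking in this last step: on the points where only some of the components meet, the number $r$ of local branches varies. The tensor product must then be interpreted only over the branches present at that point, and the construction must remain compatible under restriction to smaller strata. I will handle this by working with the stratification of $D$, and by noting that the relevant tensor factors for absent components are trivial away from those components.
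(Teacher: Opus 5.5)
Your proposal is correct and follows the paper's argument. The forward direction glues the local sections $\overline{x}_1\otimes\cdots\otimes\overline{x}_r$, since chart changes compatible with $f^\flat$ rescale them by $\prod_i u_i=1$, and your Steps two and three spell out what the paper compresses into ``reading the argument backwards''. The local computation you defer goes through, and it is also what justifies your unproved claim that the gluing isomorphisms are unique: an isomorphism over $(\ast,\N)$ must send $e_i\mapsto w_ie_i'$ with $w_i\in (u_i'/u_i)\bigl(1+(\hat{x}_i)\bigr)$ (as $\mathrm{Ann}(x_i)=(\hat{x}_i)$) and $\prod_i w_i=1$, which is solvable exactly when $\prod_i u_i/u_i'\equiv 1$ modulo $(\hat{x}_1,\dots,\hat{x}_r)$, and then uniquely because $\hat{x}_i\hat{x}_j=0$ for $i\neq j$ in the reduced ring $\OO_{\underline{X}}$ (so the lemma doing the work is that units trivial on $D$ are products $\prod_i(1+a_i\hat{x}_i)$, while extending units from $D$ is only needed to realise every local generator $\sigma$).
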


We briefly recall the proof, as we will need it in the next section. Assume first that $\underline{X}$ admits a semistable log structure $\M_X$ and take a semistable map $f:X\to (\ast, \N)$. Let  $\N^r\to \Gamma(U,\M_X)$ be a local chart as in Definition \ref{def: semistable str}, and denote $x_i=\alpha(e_i)$. We claim that the local sections $\overline{x}_1\otimes \cdots \otimes \overline{x}_r$ constructed on each $U$ glue to a nowhere-vanishing global section of $T^1_{\underline{X}}$. Indeed, a different choice of chart $\N^r\to \M_X$ will induce some element  $\overline{x}_1'\otimes \cdots \otimes \overline{x}_r'$ such that $x_i=u_i x_i'$ and $\prod_{i=1}^r u_i=1$ (by the good definition of the map $f^b$ above), showing that the constructed sections of $T^1_{\underline{X}}$ coincide on the overlaps. This proves the first implication. Reading the argument backwards we see that an isomorphism \smash{$(T^1_X\big\vert_D)^\vee\simeq \OO_D$} induces a logarithmic structure of semistable type.

\begin{example}[\cite{friedman1983global}]
Consider the \say{tetrahedron} $\underline{X}\subseteq \P^3$ obtained as the union of 4 planes $X_i\subseteq \P^3$ in general position. This variety is no d-semistable because $T_{\underline{X}}^1 = I_{X_1}|_D\otimes I_{X_2}|_D\otimes I_{X_3}|_D\otimes I_{X_4}|_D \simeq \OO_D(4)$. However, the blow up of $\underline{X}$ along 4 general points at each of the 6 irreducible components of $D$ does admits such structure.
\end{example}

\subsection{Log derivations and differential forms}
\begin{definition} \label{def:logderivation}
Let $f:X\to S$ be a morphism of  log spaces. The sheaf $T_{X/S}$ of \textit{relative log derivations} is the sheaf of germs of pairs $\theta = (v,\delta)$ where $v: \OO_{X}\rightarrow \OO_{X}$ is a $f^{-1}\OO_S$-linear derivation and $\delta:\M_X\to \OO_X$ is such that
\begin{enumerate}
    \item $\delta(ab)=\delta(a)+\delta(b)$,
    \item $\alpha(a)\delta(a)= v(\alpha(a))$, 
    \item $\delta\circ f^{\flat}=0$.
\end{enumerate}
\end{definition}
In the logarithmic setting, the sheaf $T_{X/S}$ in endowed with the relative Lie bracket that takes two log derivations $\theta_1 = (v_1,\delta_1)$ and $\theta_2 = (v_2,\delta_2)$ and gives
    $$ \big[\theta_1,\theta_2\big]:=\big([v_1,v_2], v_1\delta_2-v_2\delta_1\big),$$
where $[v_1,v_2]$ is the relative Lie bracket of vector fields on $\underline{X}/\underline{S}$  \cite[Prop. 1.1.2]{ogus2018lectures}.

\begin{definition}
The sheaf of log differentials relative to a morphism $f:X\rightarrow S$ is the quotient 
$$\Omega^1_{X/S}=\big(\Omega^1_{\underline{X}/\underline{S}} \oplus (\OO_X\otimes_\Z \M_X^{gp})\big)/K, $$
where $K$ is the $\OO_X$-module generated by the expressions $(d\alpha(a),0)-(0,\alpha(a)\otimes a)$ for $a\in \M_X$ and $(0,1\otimes \varphi(b))$ for $b\in f^{-1}\mathcal{M}_S$.    
\end{definition}

If the base $S$ is the trivial log point then we will remove part of the subscript out of $T_{X/S}$ and $\Omega^1_{X/S}$, and write instead $T_X$ and $\Omega^1_X$ respectively since those are the usual tangent and cotangent sheaves of a space.

\begin{example}
Consider the log space $(X,D)$ associated to a normal crossing divisor as in Example \ref{ex:log_str_div}. The sheaf of logarithmic differentials of $(X,D)$ is $\Omega^1_{(X,D)} = \Omega^1_{\underline{X}}(\log D)$, famously introduced by Deligne \cite{deligne1970equations}. Similarly, $T_{(X,D)} = T_{\underline{X}}(-\log D)$ is the sheaf of derivations $v$ such that $v(I_{D})\subseteq I_D$.
\end{example}

\begin{example} \label{ex: tangente_toric}
Suppose $X_\Sigma$ is the log space associated to a $\Q$-factorial toric variety without torus factors. Then we have isomorphisms
\[
\Omega^1_{X_\Sigma}\simeq \OO_{X_\Sigma}\otimes N \quad\text{and}\quad T_{X_\Sigma}\simeq \OO_{X_\Sigma}\otimes M,
\]
where $M$ and $N$ are the character and 1-parameter subgroup lattices of $X_\Sigma$ respectively.
\end{example}

\begin{example}\label{ex:tangent_underline}
Let $\underline{X} = \Spec \C\{x_1,\dots,x_n\}/(x_1\cdots x_r)$ be endowed with log structure $\M_X$ defined by $\N^r\to \OO_X$ sending $e_i\mapsto \overline{x}_i/u_i$ as in Example \ref{ex: ss_log_model}. If we write $\hat{x}_i = x_1\cdots \hat{x}_i\cdots x_n$, then the usual (non-logarithmic) module of Kähler differentials is
\[
\Omega^1_{\underline{X}} = \OO_{\underline{X}} dx_1\oplus\cdots \oplus\OO_{\underline{X}} dx_n / (\widehat{x}_1dx_1+\cdots+\widehat{x}_rdx_r)\\
\] and therefore the tangent module is generated by the following derivations
\begin{align*}
T_{\underline{X}} = \big\langle x_1\partial_{x_1},\dots,x_r\partial_{x_r},\partial_{x_{r+1}},\dots, \partial_{x_n}\big\rangle\subseteq \OO_{\underline{X}} \partial_{x_1}\oplus\cdots \oplus\OO_{\underline{X}} \partial_{x_n}.\\
\end{align*}
On the other hand, we can also compute differentials and derivations relative to the morphism $\underline{X}\rightarrow (\ast,\N)$ associated to the diagonal map $\N\rightarrow \N^r$. We denote by $u = u_1\cdots u_r$ the product of the former units. Therefore
\begin{align*}
\Omega^1_{X/(\ast,\N)} & =\bigoplus_{i=1}^r\OO_{X} \frac{dx_i}{x_i}\oplus\bigoplus_{i=r+1}^n\OO_{X} dx_{i}\Big/\Big (\frac{dx_1}{x_1}+\cdots+\frac{dx_r}{x_r}-\frac{du}{u}\Big),\\
T_{X/(\ast,\N)} & = \bigg\{v=\sum^r_{i=1} b_i x_i\partial_{x_i} + \sum^n_{i=r+1} a_i\partial_{x_i}  \in T_{\underline{X}}: v(u) - \bigg(\sum^r_{i=1} b_i\bigg)u=0\bigg\}.
\end{align*}
Remarkably, the logarithmic (co)tangent sheaves computed so far are locally free, even for log spaces whose underlying spaces are quite singular.  This is an incarnation of smoothness in the logarithmic context. In fact, this one of the main reasons to work with log spaces: they provide a broader class of smooth objects.
\end{example}
\begin{definition}
A morphism of log spaces $T\rightarrow T'$ is an \textit{infinitesimal thickening} if it is strict and the underlying morphism $\underline{T}\rightarrow \underline{T}'$ is a closed inmersion with nilpotent sheaf of ideals $I_{T/T'}$. The \textit{order} of this thickening is the minium $k\geqslant 1$ such that $I_{T/T'}^k = 0$.
\end{definition}

\begin{definition}
A morphism $f: X\rightarrow Y$ between fine log spaces is \textit{log smooth} if the underlying map $f:\underline{X}\rightarrow \underline{Y}$ is locally of finite presentation and it satisfies the following \textit{infinitesimal lifting property}: given a diagram of solid arrows
\begin{center}
\begin{tikzcd}
T \arrow[d, hook] \arrow[r]   & X \arrow[d,"f"] \\
T' \arrow[r] \arrow[ru, dotted] & Y          
\end{tikzcd}
\end{center}
where the left vertical arrow is an infinitesimal thickening, locally there exist a dotted arrow making the previous triangles commute.
\end{definition}

\begin{remark}
    A fine saturated log scheme, smooth over the trivial log point corresponds to a smooth variety $X$ equipped with the logarithmic structure of a normal crossings divisor $D$. Over the non-trivial log point, on the other hand, a semistable map $X\to (\ast,\N)$ is log smooth (see \cite{kato1988logarithmic_str,kato1996dlog_def}).
\end{remark}

\section{Log foliations and $d$-semistability}

\begin{definition}
Let $X\rightarrow (\ast,Q)$ be a log smooth morphism from a fine log space $X$ into the log point associated to some monoid $Q$. A \textit{log (singular) foliation} $\F$ of codimension $q$ on $X/(\ast,Q)$ consists of a short exact sequence of coherent sheaves
\[
0\rightarrow T_{\F}\rightarrow T_{X/(\ast,Q)}\rightarrow N_\F\rightarrow 0
\]
such that $N_\F$ is torsion-less of rank $q$ and $[T_\F,T_\F]\subseteq T_\F$. We will call $T_{\F}$ and $N_{\F}$ respectively the \textit{tangent} and \textit{normal sheaf} of the foliation $\F$. 
\end{definition}

Recall that a coherent sheaf $N$ is \textit{torsion-less} if the natural map $N\rightarrow N^{\vee\vee}$ is a monomorphism. This is a replacement of the usual torsion free hypothesis in the singular setting. This allow us to equivalently represent a log foliation by an exact sequence of coherent sheaves
\[
0\rightarrow I_\F\rightarrow \Omega^1_{X/(\ast,Q)}\rightarrow \Omega^1_\F\rightarrow 0
\]
where $\Omega^1_\F$ is torsion-less and, on a neighborhood of all general points of $\underline{X}$, there is a local basis $\omega_1,\dots, \omega_q\in I_\F$ such that $d\omega_i \wedge \omega_1\wedge\cdots \wedge \omega_q$ for all $1\leq i\leq q$ \cite[Lemma 4.2]{Quallbrunn_2015}.

\begin{example}
Let $\underline{X}$ be a smooth manifold endowed with the log structure $\M_{(X,D)}$ associated with a simple normal crossing divisor $D = D_1\cup\cdots\cup D_r$.  Let us take global sections $f_i\in\Gamma(X,\OO_X(D))$ and non-zero complex numbers $\lambda_i$ such that $\sum_{i=1}^r \lambda_i [D_i] = 0$ in $H^2(\underline{X},\C)$. As in \cite{calvo1994irreducible} we can construct an integrable twisted 1-form $$\omega = f_1\cdots f_r \sum_{i=1}^r \lambda_i \frac{df_i}{f_i}$$
defining a foliation on $\underline{X}$. On the other hand, $\omega/(f_1\cdots f_r)$ is a global section of $\Omega^1_X = \Omega^1_{\underline{X}}(\log D)$ and hence defines log foliation $\F$ on the log space $X$. Since this section is nowhere-vanishing, the foliation $\F$ is smooth in the logarithmic sense. 
\end{example}

\begin{example}
Consider a toric variety $\underline{X}_\Sigma$ obtained as the compactification of a torus $T$ with respect to a fan $\Sigma$, and let $N$ be the lattice of 1-parameter subgroups of $T$. A foliation $\F$ on $\underline{X}_\Sigma$ is \textit{toric} if for every $t\in T$ we have $t^\ast T_\F\simeq T_\F$ under the identification $t^\ast T_{\underline{X}_\Sigma}\simeq T_{\underline{X}_\Sigma}$. Those foliations are determined by a complex subspace $W\subseteq N\otimes \C$. More precisely, this is the tangent space of the foliation at the identity element $1\in T$. If $X_\Sigma$ is the corresponding log space of this toric variety (see Example \ref{ex: toric_log_str}),  any toric foliation produces a smooth log foliation via the inclusion $\OO_{X_\Sigma}\otimes_\C W\subseteq \OO_{X_\Sigma}\otimes_\Z N \simeq T_{X_\Sigma}$.
 \end{example}
 
To produce a wide range of examples, we explain below how to glue usual foliations on smooth varieties to obtain foliations on normal crossings singular spaces.

\subsection{Pushout  foliations}

We now move on to viewing foliations on varieties with normal crossings through the lens of the notions explained in the previous sections.
We will begin by briefly discussing the gluing process for foliations.

The gluing of schemes along subspaces corresponds to the notion of pushout. Although arbitrary pushouts may not exist in the same category, we do have an existence theorem that applies to our setting. By \cite[Theorem 5.4 and 7.1]{ferrand2003conducteur}, given a family of smooth varieties $X_i$ of dimension $n$ with simple normal crossing divisors $D_i$ and a family of isomorphisms 
$$ \varphi_{ij}:D_{ij}\subseteq D_i\to D_{ji}\subseteq D_j$$
    satisfying $\varphi_{ji}=\varphi_{ij}^{-1}$ and the cocycle condition on the triple intersections, we can define the pushout scheme $\coprod_D X_i$, which is the variety corresponding to the  gluing of the $X_i$ along this data and will have normal crossings depending on the combinatorics of $\{\varphi_{ij}\}$. When no confusion arises, we will write $\underline{X}$ instead of $\coprod_D X_i$.

\begin{lemma}
    Let $\underline{X}$ be a variety with simple normal crossings with double locus $D$, and let $X_1,\dots,X_r$ be its irreducible components. Then
    \begin{enumerate}
    \item The restriction map $T_{\underline{X}}\to T_{X_i}$
        factors through $T_{X_i}(-\log  (D\cap X_i))$, and
        \item the composition $T_{\underline{X}}\to T_{X_i}(-\log(D\cap X_i))\to i_*T_D$ induces an isomorphism $T_{\underline{X}} \simeq \bigtimes_{T_D}T_{X_i}(-\log D)=\{(v_i)\,\vert\, v_i\vert_D=v_j \vert_D\}$.
    \end{enumerate}
\end{lemma}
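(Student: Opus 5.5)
Both statements are local on $\underline{X}$, and the sheaves involved are coherent, so I would reduce to the local model of Example \ref{ex:tangent_underline}: $\underline{X}=\Spec\C\{x_1,\dots,x_n\}/(x_1\cdots x_r)$ with $X_i=V(\hat{x}_i)$ and $D\cap X_i=\bigcup_{j\neq i,\,j\le r}V(x_j)\cap X_i$. Here $X_i$ is the coordinate hyperplane $\{x_j=0\ \text{for all } j\le r,\ j\ne i\}$ is not right; rather, I will index the components so that $X_i=V(x_i)$, giving $D\cap X_i=\bigcup_{j\neq i}V(x_i,x_j)$. Globally, the restriction maps $T_{\underline{X}}\to T_{X_i}$ exist because a derivation of $\OO_{\underline{X}}$ preserves the ideal of each irreducible component. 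Concretely, $v(x_i)\in\sqrt{(x_i)}$ on $\underline{X}$, which follows from the explicit generators listed in Example \ref{ex:tangent_underline}. Once the local statements are proved, they glue, since all the maps involved are canonical.

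For (1), I would use the generators $x_1\partial_{x_1},\dots,x_r\partial_{x_r},\partial_{x_{r+1}},\dots,\partial_{x_n}$ of $T_{\underline{X}}$. Restricting to $X_i=V(x_i)$ kills $x_i\partial_{x_i}$ and sends the others to $x_j\partial_{x_j}$ for $j\ne i$, $j\le r$, together with $\partial_{x_k}$ for $k>r$. These are exactly the generators of $T_{X_i}(-\log(D\cap X_i))$, where $D\cap X_i=V(\prod_{j\ne i}x_j)$ in the coordinates of $X_i$. This shows the factorization, and in fact shows that the restriction is surjective.

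For (2), the main step is to show that the map $\Phi\colon T_{\underline{X}}\to\prod_i T_{X_i}(-\log D)$ is injective with image the fibered product. Compatibility on $D$ is clear, because both $v|_{X_i}|_{X_i\cap X_j}$ and $v|_{X_j}|_{X_i\cap X_j}$ equal $v$ restricted to $X_i\cap X_j$. Logarithmic fields restrict to tangent fields on the strata, which is what makes $v_i|_D$ meaningful; I would interpret $i_*T_D$ componentwise on the strata $X_i\cap X_j$.

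For injectivity, I would argue algebraically. A derivation $v$ is determined by the values $v(x_k)\in\OO_{\underline{X}}$. Since $\OO_{\underline{X}}\hookrightarrow\prod_i\OO_{X_i}$ is injective, because $\underline{X}$ is reduced, the restrictions determine these values.

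For surjectivity, I would write each $v_i=\sum_{j\ne i}b^{(i)}_j x_j\partial_{x_j}+\sum_{k>r}a^{(i)}_k\partial_{x_k}$. The compatibility conditions say that the coefficient functions $a_k^{(i)}$ and $b_j^{(i)}$ agree on the double loci for $k>r$ and $j\ne i,i'$. To glue them I would use the exact sequence
\[
0\to\OO_{\underline{X}}\to\prod_i\OO_{X_i}\to\prod_{i<j}\OO_{X_i\cap X_j},
\]
which is exact for simple normal crossings, i.e. $\OO_{\underline{X}}$ is the fibered product of the $\OO_{X_i}$ over the double locus. Applying it, the $a_k$ glue to a global $a_k$. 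The coefficient $b_j$ is only specified on the components $X_i$ with $i\ne j$. On $X_j$ the term $x_j\partial_{x_j}$ vanishes, so I would need to choose an extension there that is compatible on $X_j\cap X_i$. This is the delicate point: the sequence must be applied to the tuple in which $b_j$ on $X_j$ is taken to be the restriction of any $b^{(i)}_j$. For the resulting tuple to be compatible, one needs $b^{(i)}_j|_{X_i\cap X_j}$ to agree for different $i$ on triple intersections. I expect this to follow from the componentwise compatibility, or else to be irrelevant, because $x_j b_j$ only matters modulo $x_j$ on $X_j$. I anticipate this bookkeeping to be the main obstacle. I would resolve it by working with $v(x_j)=x_jb_j$ directly, checking that the tuple $(v_i(x_j))_i$, with the value $0$ on $X_j$, is compatible on the double loci and therefore glues to an element of $\OO_{\underline{X}}$ lying in $(x_j)$.
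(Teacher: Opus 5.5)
Your proposal is correct and follows the paper's route. The paper only says that the lemma follows from the local computation of $T_{\underline{X}}$ in Example \ref{ex:tangent_underline}, and you carry out exactly that computation: injectivity comes from reducedness, and surjectivity from the gluing sequence $0\to\OO_{\underline{X}}\to\prod_i\OO_{X_i}\to\prod_{i<j}\OO_{X_i\cap X_j}$. Your final handling of the bookkeeping is the right one: gluing the values $v_i(x_j)$, set to $0$ on $X_j$ (compatible on $X_i\cap X_j$ because $v_i$ is logarithmic along $x_j$), yields an element of $(x_j)$, so the detour through the coefficients $b_j$ is unnecessary.
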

\begin{proof}
This follows straightforward from the computations made in Example \ref{ex:tangent_underline}.  
\end{proof}

Let $\F$ be a codimension 1 foliation on $X$ defined by a holomorphic $1$-form $\omega \in H^0(X,\Omega^1_X\otimes N_\F)$ and $Y$ a smooth $\F$-invariant hypersurface. Since the restriction of $\omega$ to $Y$ vanishes on $T_Y$, it factors trough the conormal bundle of $Y$. Hence we obtain section
$$\omega|_Y \in H^0\big(Y,N_{Y/X}^\vee \otimes N_\F|_Y\big)$$
We define $Z(\F,Y)$ as the zero divisor of $\omega|_Y$ (see \cite[p. 10] {pereira2024closed}).

\begin{lemma}\label{T_F_loc_free}
 Let $X$ be as in the lemma above, and let $\F_i$ be a foliation on $X_i$. Then:
    \begin{enumerate}
        \item the subsheaf $$ T_\F= \{v\in T_{\underline{X}}\,\vert\, v\vert_{X_i}\in T_{\F_i} \}\subseteq T_{\underline{X}}$$
        defines a foliation on $X$.
        \item if $\dim(\underline{X})=r=2$ and $D$ is a leaf of $\F_1$ and $\F_2$, then $T_\F \subseteq T_{\underline{X}}$ restricts to $T_{\F_i}\subseteq T_{X_i}$ if and only if  $Z(\F_1,D)=Z(\F_2,D)$. In this case, this  induces an isomorphism $T_{\F_1}\vert_D\simeq T_{\F_2}\vert_D$ under which $T_\F\simeq T_{\F_1}\times_D T_{\F_2} $, the pushout of line bundles as in \cite{ferrand2003conducteur}.
\end{enumerate}
\end{lemma}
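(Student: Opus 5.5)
For part (1), I will use the previous lemma, which identifies $T_{\underline{X}}$ with the fibre product $\bigtimes_{T_D} T_{X_i}(-\log D)$. Under this identification
$$T_\F=\{(v_i)\in T_{\underline X} \,\vert\, v_i\in T_{\F_i}\}=\bigtimes_{T_D}\big(T_{\F_i}\cap T_{X_i}(-\log D)\big).$$
\emph{Coherence.} $T_\F$ is the kernel of the $\OO_{\underline X}$-linear map
$$T_{\underline X}\to \bigoplus_i T_{X_i}/T_{\F_i} = \bigoplus_i N_{\F_i}.$$
So it is coherent.

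\emph{Involutivity.} The bracket on $T_{\underline X}$ is computed componentwise, since restriction to $X_i$ is a Lie algebra map. Each $T_{\F_i}$ is involutive, hence $[T_\F,T_\F]\subseteq T_\F$.

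\emph{Torsion-less normal sheaf.} The quotient $N_\F=T_{\underline X}/T_\F$ injects into $\bigoplus_i N_{\F_i}$. Each $N_{\F_i}$ is torsion-free on the integral space $X_i$. I then argue that a subsheaf of a direct sum of torsion-free sheaves on the components maps injectively to its double dual over $\OO_{\underline X}$. Concretely, every section of $N_\F$ is detected by some $\OO_{X_i}$-linear functional on $N_{\F_i}$, and these functionals extend to $\OO_{\underline X}$-linear maps $N_\F\to\OO_{X_i}\hookrightarrow \OO_{\underline X}^{\oplus}$ after multiplying by a suitable local equation of the other components. I expect this torsion-less check to need the most care. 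The rank is read off generically on each component.

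For part (2), I will work locally near a point of $D$ in coordinates $\underline X=\{xy=0\}$, with $X_1=\{y=0\}$, $X_2=\{x=0\}$ and $D=\{x=y=0\}$, and $z$ denoting the coordinates along $D$. Since $D$ is $\F_i$-invariant, $\F_1$ is generated near $D$ by a vector field of the form
$$v_1=a(x,z)\,x\partial_x+w_1,$$
and symmetrically $v_2=b(y,z)\,y\partial_y+w_2$. Here $w_i$ is tangent to $D$ to first order, and the restriction $w_i|_D$ generates $T_{\F_i}|_D=T_D$ away from singular points.

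The key computation is the following. The normal part of $\omega_i|_D$ vanishes exactly where the tangency to $D$ degenerates, so $Z(\F_i,D)$ is the divisor on $D$ along which $T_{\F_i}|_D\to T_D$ fails to be an isomorphism. Equivalently, $Z(\F_i,D)$ is cut out by the coefficient $g_i$ with $v_i|_D=g_i\,\partial$, where $\partial$ is a local generator of $T_D$ in dimension $2$.

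\emph{Sufficiency.} A section $(v_1,v_2)$ of $T_{\underline X}$ must satisfy $v_1|_D=v_2|_D$. If $Z(\F_1,D)=Z(\F_2,D)$, then $g_1$ and $g_2$ differ by a unit, so after rescaling $v_2$ the pair $(v_1,v_2)$ glues to a section of $T_\F$. Its restriction to $X_i$ therefore generates $T_{\F_i}$, which proves surjectivity of $T_\F\to T_{\F_i}$.

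\emph{Necessity.} If the divisors differ, say at a point where $g_1$ vanishes to higher order than $g_2$, then every glued pair restricts on $X_2$ into $g\cdot T_{\F_2}$ with $g$ vanishing on $Z(\F_1,D)-Z(\F_2,D)$. Hence the restriction to $X_2$ is not surjective.

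\emph{The isomorphism.} The identification $T_{\F_1}|_D\simeq T_{\F_2}|_D$ is the one obtained by composing the two isomorphisms $T_{\F_i}|_D\simeq \OO_D(Z)\otimes\ldots$ into $T_D$, using that both images equal $T_D(-Z)$. With this gluing, $T_\F=T_{\F_1}\times_D T_{\F_2}$ follows directly from the fibre product description in part (1). The main obstacle in part (2) is making the relation between $Z(\F_i,D)$ and the image of $T_{\F_i}|_D$ in $T_D$ precise, especially at singular points of $\F_i$ on $D$.
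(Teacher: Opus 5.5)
Your proposal is correct and follows essentially the same route as the paper. In (1), $T_\F$ is the kernel of $T_{\underline X}\to\bigoplus_i N_{\F_i}$ with involutivity checked componentwise, and your map $\OO_{X_i}\hookrightarrow\OO_{\underline X}$ (multiplication by the equation of the other components) supplies the torsion-lessness detail the paper leaves implicit; in (2), you compare the images of $T_{\F_i}|_D$ in $T_D$ and rescale a generator by a unit extended from $D$. The step you flag as the main obstacle takes one line in the paper: write $\omega=A\,dx+B\,dy$ with $D=\{x=0\}$ (so $x\mid B$) and take the dual generator $v=B\partial_x-A\partial_y$; then $\omega|_D$ in the conormal bundle and $v|_D$ in $T_D$ both have coefficient $A|_D$, so $Z(\F_i,D)$ is exactly the vanishing divisor of $v_i|_D$, with multiplicities, including at singular points.
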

\begin{proof} The involutivity of the sheaf $T_\F$ follows directly from the description of $T_{\underline{X}}$ in the lemma above. In order to see that the quotient $T_{\underline{X}}/T_\F$ is torsion-less and coherent it suffices to observe that $T_\F$ is the kernel of the morphism of coherent sheaves
$$ T_{\underline{X}}\to \bigoplus_{i=1}^r T_{X_i}\to \bigoplus_{i=1}^r N_{\F_i},$$
where we are omitting the pushforwards via the embeddings $X_i\hookrightarrow {\underline{X}}$. Indeed, since the sheaf on the right side is torsion-less and coherent we can deduce that the same holds for $T_{\underline{X}}/T_\F$ and $T_\F$ respectively. 

For the second assertion, notice that the hypotheses imply that $D$ is a smooth curve. Around each point $p\in D\subseteq X_1$ there exist a choice of coordinates $(x,y)$ with $D=\{x=0\}$. Let $\omega=Adx +Bdy$ be a local $1$-form defining $\F$. Since $N_{D/X_1}$ is spanned by the class of $\frac{\partial}{\partial x}$, we see that, locally around $p$, $Z(\F_1,D)=div(A)$. On the other hand, $v_1=-A\frac{\partial}{\partial y}+B\frac{\partial}{\partial x}$ is a local generator of $T_{\F_1}$, and its restriction to $D$ is $-A\frac{\partial}{\partial y}$. This shows that if $v_2$ is a local generator of $T_{\F_2}$, then $v_2\vert_D=u v_1$ for some $u\in \OO_D^*$. Extending $u$ to some neighborhood of $p$ in $X_1$ we can construct a local generator $v=(uv_1,v_2)$ of $T_\F$ that restricts to local generators on each $X_i$. 

Conversely, if $T_\F$ restricts to each $T_{\F_i}$ then the image of the restriction $T_\F\to i_*T_D$ coincides with the corresponding $T_{\F_i}\to i_*T_D$, giving an isomorphism $T_{\F_1}\vert_D\simeq T_{\F_2}\vert_D$. This allows to promote these objects to a line bundle $T_{\F_1}\times_D T_{\F_2}$ and a global section $v'\in T_{\underline{X}}\otimes (T_{\F_1}\times_D T_{\F_2})^{\otimes -1}$ defining $\F$.
\end{proof}

\begin{definition}
    Let $X$ be a simple normal crossing variety, together with foliations $\F_i$ on its irreducible components. Their pushout will be the foliation $\F=\coprod_D \F_i$ constructed above. In the case where $r=2$ we will often use the notation $\F=\F_1\sqcup_D \F_2$. 
\end{definition}

 \begin{remark}
Let $\underline X=\{xyz=0\}\subset \mathbb A^3$, with irreducible components
$X_1=\{x=0\}$, $X_2=\{y=0\}$ and $X_3=\{z=0\}$. Set
$D_{ij}=X_i\cap X_j$ and $D_{123}=X_1\cap X_2\cap X_3=\{0\}$.
On the components, consider the rank-one subsheaves generated by
$
e_1=y\partial_y-z\partial_z
$
on $X_1$,
$
e_2=x\partial_x-z\partial_z
$
on $X_2$, and
$
e_3=x\partial_x+\lambda y\partial_y
$
on $X_3$, where $\lambda\in\mathbb C^*$ and $\lambda\neq 1$. Thus
$T_{\F_i}=\mathcal O_{X_i}e_i$.
Along $D_{12}$, the restrictions of $e_1$ and $e_2$ are both
$-z\partial_z$. Along $D_{23}$, the restrictions of $e_2$ and $e_3$ are both
$x\partial_x$. Along $D_{31}$, one has
$e_1|_{D_{31}}=y\partial_y$ and
$e_3|_{D_{31}}=\lambda y\partial_y$. Hence the pairwise identifications are
given by
$\phi_{12}(e_1)=e_2$, $\phi_{23}(e_2)=e_3$ and
$\phi_{31}(e_3)=\lambda e_1$.
We claim that the sheaf $T_\F$ is not locally free rank-one sheaf on $\underline X$:
if it did, there would exist a local generator $v$ of $T_{\F}$.
On each component $X_i$, the restriction of $v$ must generate
$T_{\F_i}=\mathcal O_{X_i}e_i$. Hence
$v|_{X_i}=u_i e_i$
for some unit $u_i\in\mathcal O_{X_i,0}^{\times}$.
Since $v$ is a section of $T_{\underline X}$, the three vector fields
$u_i e_i$ must agree on the double strata, i.e., the equations $
u_1|_{D_{12}}=u_2|_{D_{12}}$, 
$u_2|_{D_{23}}=u_3|_{D_{23}}$, 
and 
$u_1|_{D_{31}}=\lambda u_3|_{D_{31}}$ must hold.
Evaluating at the triple point, we obtain
\[
u_1(0)=u_2(0)=u_3(0),
\qquad
u_1(0)=\lambda u_3(0).
\]
Since \(u_3(0)\neq 0\), this forces \(\lambda=1\), a contradiction.
Therefore the corresponding rank-one subsheaf is not locally free at
\(D_{123}\).
Thus, for
$r\geq 3$, compatibility along the double strata must also satisfy the
cocycle condition on the triple strata.
\end{remark}

\subsection{Foliated d-semistability} We can now move on to discussing foliated $d$-semistability.

\begin{definition}\label{def:semiestable_fol}
    Suppose $\F$ is a foliation on a normal crossing complex space $\underline{X}$. A \textit{logarithmic structure of semistable type for $(\underline{X},\F)$} is a logarithmic structure of semistable type $\M_X$ on $\underline{X}$ together with a semistable map  $(\underline{X},\M_X)\rightarrow (\ast,\N)$, such that $$T_\F\subseteq T_{(\underline{X},\M_X)/(\ast,\N)}.$$
We will say that a pair $(\underline{X},\F)$ is \textit{d-semistable} if there exists one such log structure, and $(X,\F)$ would be called a \textit{semistable foliated pair}. If $(\underline{X},\F)$ only admits a logarithmic structure of semistable type locally, we will say that this pair is \textit{locally d-semistable}. An \textit{isomorphism} between two logarithmic structures $\M_X$ and $\M'_X$ of semistable type for $(\underline{X},\F)$ is a commutative triangle 

\begin{center}
    \begin{tikzcd}[column sep=tiny] 
{(\underline{X},\M_X)} \arrow[rd] \arrow[rr,"\varphi"] &                & {(\underline{X},\M'_X)} \arrow[ld] \\
                                             & {(\ast,\N)} &                                   
\end{tikzcd}
\end{center}
where the horizontal map is an isomorphism of logarithmic structures.
\end{definition}

\begin{example}\label{ex:glueleaf} Let $X = X_1\sqcup X_2$ a simple normal crossing variety with double locus $D$. Assume there is a codimension one foliation $\F_i$ on $X_i$, smooth along $D$ and having such divisor as a leaf. We claim that the pushout foliation $\F =\F_1\sqcup_D \F_2$ on $X = X_1\sqcup_D X_2$ is locally d-semistable. Then we can find coordinates $(x_i, z_1,\cdots,z_{n-1})$ on $X_i$ such that $D_i = V(x_i)$ and the foliation $\F_i$ is given by the $1$-form $dx_i$. Then $\F$ is generated by the vector fields $z_0\partial_{z_0},\dots, z_{n-1}\partial_{z_{n-1}}$. 
\end{example}

\begin{example}\label{ex:transverse}
Let us now consider the opposite case, i.e. when two smooth foliations by curves $\F_1$ and $\F_2$ are transverse to the common divisor $D$. Similarly to the previous example, there must exist analytic coordinates 
$(x_i,z_1,\dots,z_{n-1})$ such that $D=V(x_i)\subseteq X_i$ and $T_{\F_i}=\langle \partial_{x_i}\rangle$. The foliation $\F_1\sqcup \F_2$ on
$X_1\sqcup_D X_2$ will be generated by the vector field 
$v=x_1\partial_{x_1}-x_2\partial_{x_2}$,
which is contained in $T_{X/(\ast, \N)}$ for the canonical log smooth morphism $X\to (\ast, \N)$.
\end{example}

\begin{example} \label{ex:babycamachosad} Consider the foliation $\F$ on $\underline{X}=\{xy=0\}\subseteq (\C^3,0)$ whose restriction to each irreducible component is induced by $v_1=\lambda_1y\partial_y+z\partial_z$ and $v_2=\lambda_2x\partial_x+z\partial_z$ respectively. Then $\F$ is semistable provided that $\lambda_1+\lambda_2=0$, since in this case the vector field $\lambda_1y\partial_y+\lambda_2 x\partial_x+z\partial_z\in T_{X/(\ast,\N)}$, where we are taking the standard log structure over the log point. The relation between local semistability and this residues will be further explored in Proposition \ref{prop:camacho-sad}.
\end{example}

The above example suggests that in the case where $D$ is $\F_i$-invariant, $d$-semistability imposes a strong local compatibility constraint between the foliations on each component:

\begin{proposition} \label{prop:camacho-sad}
Let $\F$ be a germ of semistable foliation on $\underline{X} = V(x_1\cdots x_r)\subseteq (\C^n,0)$, and denote by $\F_i$ the restriction of $\F$ to each irreducible component $X_i = V(x_i)$. If we assume that the divisors $D_i = D\cap X_i$ are $\F_i$-invariant, then their Camacho-Sad indexes satisfy the relations$$CS(\F_i,D_{ij},0) + CS(\F_j,D_{ij},0) = r-2.$$
\end{proposition}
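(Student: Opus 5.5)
The plan is to compute everything in a single semistable chart, where the condition $T_\F\subseteq T_{X/(\ast,\N)}$ turns into one linear relation among the logarithmic coefficients of a generator of $T_\F$. For the Camacho--Sad indices to make sense, each $\F_i$ must be a foliation by curves on the surface $X_i$, with $D_{ij}=X_i\cap X_j$ an invariant curve through $0$. So I will work with $n=3$ and $T_\F$ of rank one. I also assume $0$ is a singular point of each $\F_i$; otherwise the index is computed at a regular point and the statement degenerates.

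First I fix a chart $\N^r\to\Gamma(U,\M_X)$ as in Definition \ref{def: semistable str}, with $e_i\mapsto \overline{x}_i/u_i$, and put $u=u_1\cdots u_r$. By Example \ref{ex:tangent_underline}, a local generator $v$ of $T_\F$ can be written as
\[
v=\sum_{i=1}^r b_i x_i\partial_{x_i}+\sum_{k=r+1}^{3} a_k\partial_{x_k},
\qquad v(u)=\Big(\sum_{i=1}^r b_i\Big)u .
\]
Restricting $v$ to $X_i=V(x_i)$ gives a generator of $T_{\F_i}$; this uses the description of $T_\F$ via restrictions in Lemma \ref{T_F_loc_free}, and I will replace $v$ by $v/g$ if the restrictions acquire a common factor. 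The invariance of $D_i$ is then automatic in the $x_j$-directions, $j\le r$. For $k>r$ it forces $a_k$ to vanish along the relevant strata, so that $a_k(0)=0$.

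The key observation is that $v(0)=0$, since $0$ is a singularity. Hence $v(u)(0)=0$, and evaluating the displayed relation at $0$ gives $\sum_{i=1}^r b_i(0)=0$. Now I compute $CS(\F_i,D_{ij},0)$ from the linear part of $v|_{X_i}$ in the coordinates of $X_i$. There are two cases.

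When $r=3$, the surface $X_i$ has coordinates $x_j,x_k$, the restriction is $v|_{X_i}=b_jx_j\partial_{x_j}+b_kx_k\partial_{x_k}$, and the curve $D_{ij}=\{x_j=0\}$ is invariant. This gives the index $\pm b_j(0)/b_k(0)$, and symmetrically $CS(\F_j,D_{ij},0)=\pm b_i(0)/b_k(0)$. Summing and using $b_i(0)+b_j(0)=-b_k(0)$ gives $\mp1$.

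When $r=2$, the transverse coefficient is $a_3$ with $a_3=c\,x_3+\dots$. The two indices are $\pm b_2(0)/c$ and $\pm b_1(0)/c$, so their sum is $\pm(b_1+b_2)(0)/c=0$. This is exactly Example \ref{ex:babycamachosad}.

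I expect the main obstacle to be the genericity needed for the indices to be read off from eigenvalues. The denominators $b_k(0)$ or $c$ must be nonzero, and when the linear part is nilpotent or degenerate one has to use the residue definition $CS=\mathrm{Res}\,(-\beta/\alpha)$. The plan there is to write $v|_{X_i}$ as $x_j\,\alpha\,\partial_{x_j}+\beta\,\partial_{\text{tangent}}$ and compute the residue directly; the same relation among the $b_i$, restricted to $D_{ij}$ rather than evaluated at $0$, should give the sum of residues. A second, more bookkeeping point is to fix the sign convention for $CS$ so that the constant is $r-2$ rather than $2-r$. With the convention $CS=-(\text{transverse eigenvalue})/(\text{tangent eigenvalue})$, the $r=3$ computation gives $1=r-2$ and the $r=2$ computation gives $0$. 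I will state the convention explicitly and check it against Example \ref{ex:babycamachosad}.
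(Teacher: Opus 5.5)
Your computation is the paper's argument in dual form. The paper contracts a log volume form with $T_\F$ to get $\omega=\sum a_i\,dx_i/x_i+\eta$ and uses $\sum_i dx_i/x_i=du/u$ on each $X_i$; you use the dual relation $\sum_i b_i=v(u)/u$ on $D$. The eigenvalue step only covers the case $b_k(0)\neq0$ (resp.\ $c\neq0$). The residue version you hold in reserve is the real proof, and it goes through \emph{provided} $b_k|_{D_{ij}}\not\equiv0$ (resp.\ $a_3|_{D_{12}}\not\equiv0$). Indeed, on $D_{ij}$ one has $v=b_kx_k\partial_{x_k}$, so the relation reads $b_i+b_j=b_k\,(x_k\partial_{x_k}u/u-1)$. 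After dividing by $b_kx_k$, the two residues add up to $\Res\big((du/u-dx_k/x_k)|_{D_{ij}}\big)=-1$ in the standard convention, and to $\Res(du/u)=0$ when $r=2$.

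That proviso is the gap. It says that $v|_{X_i}$ does not vanish identically along $D_{ij}$, and invariance of $D_i$ does not imply it. Suppose $b_k|_{D_{ij}}\equiv0$. Then $v|_{X_i}=x_j(b_j\partial_{x_j}+\cdots)$, and invariance of $D_{ij}$ under the saturated field forces $b_j|_{D_{ij}}\equiv0$, hence $b_i|_{D_{ij}}\equiv0$. The relation on $D_{ij}$ then collapses to $0=0$ and gives no information.

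This case is not empty. Take $u=1$ on $V(x_1x_2x_3)$ and $v=(x_1-2x_2)x_1\partial_{x_1}+(x_2-2x_1)x_2\partial_{x_2}+(x_1+x_2)x_3\partial_{x_3}$. Then $\sum_i b_i=0$, $v|_{X_1}=x_2(x_2\partial_{x_2}+x_3\partial_{x_3})$ and $v|_{X_2}=x_1(x_1\partial_{x_1}+x_3\partial_{x_3})$. The saturation of $\OO v$ in $T_{X/(\ast,\N)}$ is a log foliation with radial $\F_1,\F_2$ and all $D_i$ invariant, yet its indices along $D_{12}$ sum to $\pm2$. The corresponding foliation on $\underline X$ contains the Euler field near general points of $D_{12}$ and is not semistable, so this does not contradict the proposition. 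It does show that the saturation of $\F$ has to enter the proof.

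The paper supplies this step through $D$ being a leaf. It picks $v\in T_\F$ with $R(v)\neq0$ on every component of $D$, which is exactly what makes $a_j-a_i\not\equiv0$ on $D_{ij}$ in its formula. In your setting, where $T_\F=\OO v$, you can argue by torsion instead. If all $b_l$ vanish on $D_{ij}$, go to a general point of $D_{ij}$, where $\underline X=V(x_ix_j)$, and write $v=x_iv'+x_jv''$ with $v',v''\in T_{X/(\ast,\N)}$. Then $(x_i+x_j)\,x_jv''=x_jv$ while $x_jv''\notin\OO v$, so $N_\F$ would have torsion.

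With that step added, the residue computation alone proves the claim for $n=3$. You then no longer need $0$ to be singular, nor $v|_{X_i}$ to generate $T_{\F_i}$ (it need not, and the residue is unchanged by factors not vanishing identically on $D_{ij}$).

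Two smaller points. First, invariance imposes nothing on $a_k$ for $k>r$, since $\partial_{x_k}$ is tangent to $D_{ij}$; so $a_k(0)=0$ comes only from singularity. Second, your sign remark is correct: the paper's residue formula carries no minus sign, so it uses the opposite of the usual convention, under which the constant would be $2-r$. Example~\ref{ex:babycamachosad} cannot detect this, since there $r-2=0$.
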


\begin{proof}
By assumption, there exists a local semistable structure $(X,\F)\to(\ast,\N)$. Since $D$ is a leaf of $\F$, there is a surjection 
$$R:\bigwedge^{n-1}T_\F\to i_*\bigwedge^{n-1}T_D.$$
Let $v\in \bigwedge^{n-1}T_\F$ be any element such that $R(v)$ is nonzero along every component of $D$. The contraction of a top log-form $\Omega\in \Omega^n_{X/(\ast,\N)}$ yields a section $\omega\in \Omega^1_{X/(\ast,\N)}(U)$ for some $U$ containing all generic points of $D$ and such that $\omega(T_\F)=0$.

Writing $\omega = a_1 dx_1/x_1 + \cdots +a_r dx_r/x_r + \eta$, the restriction of the foliation $\F$ to $X_i$ is induced by  
\[
\omega_i = (a_1-a_i) \widehat{x}_{1i}dx_1 + \cdots + (a_r-a_i) \widehat{x}_{ri}dx_r + \widehat{x}_{i}\eta.
\]
Consider the algebraic leaf $D_{ij} = V(x_i,x_j)\subseteq X_i$. Then the Camacho-Sad index of $\F_i$ along $D_{ij}$ is
\begin{align*}
CS(\F_i, D_{ij}) &= \Res\bigg(\frac{1}{(a_j-a_i)\widehat{x}_{ij}}\Big(\sum_{k\neq i, j} (a_k-a_i) \widehat{x}_{ijk}dx_k + \widehat{x}_{ij}\eta\Big)\Big\vert_{D_{ij}}\bigg)\\
& = \Res\bigg(\frac{1}{(a_j-a_i)}\Big(\sum_{k\neq i, j} \frac{(a_k-a_i)}{x_k}dx_k  + \eta\Big)\Big\vert_{D_{ij}}\bigg)
\end{align*}
In particular, comparing these residues along different components we get
\begin{align*}
    CS(\F_i,D_{ij}) + CS(\F_j,D_{ij}) & = \Res\bigg(\frac{1}{(a_j-a_i)}\Big(\sum_{k\neq i, j} \frac{(a_k-a_i) + (a_j-a_k)}{x_k}dx_k\Big)\Big\vert_{D_{ij}}\bigg)\\
     & = \Res\bigg(\sum_{k\neq i, j} \frac{dx_k}{x_k}\Big\vert_{D_{ij}}\bigg) = r-2,
\end{align*}
finishing the proof.
\end{proof}

\begin{remark}\label{double_crossings}
  In the case where $\underline{X} = X_1\sqcup_D X_2$ has only double simple normal crossings, as we mentioned before, we have that \smash{$T^1_X\simeq I_{X_1}|_D\otimes I_{X_2}|_D\simeq N_{D/X_1}\otimes N_{D/X_2}$} and hence $\underline{X}$ will be d-semistable whenever \smash{$N_{D/X_1}\simeq N_{D/X_2}^{\vee}$}. This is compatible with the above formula, since by the Camacho-Sad Formula \cite{camacho1987pontos}, together with the above Proposition \ref{prop:camacho-sad} gives $\deg (N_{D/X_1})=-\deg (N_{D/X_2})$. 
\end{remark}

\subsection{Criterion for foliated semistability} For a general complex space $\underline{X}$ with cotangent complex $L_{\underline{X}}$, we consider for each integer $i\geqslant 0$ the coherent sheaf on $\underline{X}$ defined by
$$T_{\underline{X}}^i = \Ext^i(L_{\underline{X}},\OO_{\underline{X}}).$$
It is well-known that the sheaf of graded modules \smash{$T_{\underline{X}}^\bullet = \bigoplus_{i\geqslant 1} T_{\underline{X}}^i$} is endowed with a structure of graded Lie algebra  \cite{schlessinger1985lie}. Coming back to our case of interest, let $\underline{X}$ be a normal crossing variety. Then  $T_{\underline{X}}^i = 0$ for $i\geqslant 2$ because $\underline{X}$ is a local complete intersection. As a consequence, the graded Lie algebra structure on \smash{$T_{\underline{X}}^\bullet = T_{\underline{X}}^0\oplus T_{\underline{X}}^1$} is quite simple: besides the usual Lie concentrated in degree zero, it is completely determined by the bracket \smash{$[-,-]: T_{\underline{X}}^0\otimes T_{\underline{X}}^1 \rightarrow T_{\underline{X}}^1$}. 

Locally on $\underline{X}$, this operation becomes very explicit \cite[Section 4]{friedman1983global}. By the computations in \cite[Section 3.1]{sernesi2007deformations}, given a local embedding $\underline{X}= V(x_1\cdots x_r) \subseteq (\C^n,0)$,  we get an identification
\[
T^1_{\underline{X}} \simeq \OO_{\underline{X}}/(\hat{x}_1,\dots, \hat{x}_r)
\]
where \smash{$\hat{x}_i = x_1\cdots \hat{x}_i \cdots x_r$}. Then the bracket between the class of a function \smash{$g\in T_{\underline{X}}^1$} and a vector field \smash{$v = \sum_{i=1}^rb_ix_i\partial_{x_i}+\sum_{i>r} a_i \partial_{x_i}\in T_{\underline{X}}^0$} is 
\begin{equation} \label{eq: nabla_dual}
[v,g] =   v(g)-\bigg(\sum_{i=1}^rb_i\bigg) g.
\end{equation}
It is clearly $\OO_{\underline{X}}$-linear in the first variable and thus defines a \textit{connection} $\nabla$ on \smash{$T^1_{\underline{X}}$}.

\begin{lemma}\label{lema: connection}
Suppose $\F$ is a foliation on a normal crossing variety $\underline{X}$. The semistable log structure on $\underline{X}$ induced by a nowhere-vanishing global section \smash{$s\in H^0(\underline{X},T^1_{\underline{X}})$} is of semistable type for $(\underline{X},\F)$ if and only if  $s$ is \textit{$\F$-flat}, i.e. $\nabla_v(s)=0$ for every $v\in T_\F$.
\end{lemma}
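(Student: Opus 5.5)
The statement is local on $\underline{X}$: the condition $T_\F\subseteq T_{X/(\ast,\N)}$ can be checked on stalks, and so can $\F$-flatness of $s$. My plan is therefore to fix a point, choose a local embedding $\underline{X}=V(x_1\cdots x_r)\subseteq(\C^n,0)$, and compare the two conditions directly in coordinates, using the explicit description of $T_{X/(\ast,\N)}$ from Example \ref{ex:tangent_underline} together with the bracket formula \eqref{eq: nabla_dual}.

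First I will describe the log structure attached to $s$, following the sketch after Theorem \ref{thm: semistability_T1}. Under the identification $T^1_{\underline{X}}\simeq\OO_{\underline{X}}/(\hat{x}_1,\dots,\hat{x}_r)$, which is $\OO_D$ near the point, the nowhere-vanishing section $s$ is locally the class of a unit. Choosing charts $e_i\mapsto x_i/u_i$ amounts to fixing $u=u_1\cdots u_r$, and the gluing argument there says that the product $u$ (modulo $(\hat{x}_i)$) is exactly what records $s$. I would normalize so that $s$ is represented locally by $g=u$, or by $u^{-1}$; this point needs care, because it fixes the sign and the dual. Checking this identification against the canonical isomorphism $T^1_{\underline{X}}|_D\simeq(\bigotimes I_{X_i}|_D)^\vee$ is the step I expect to be the main obstacle.

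Next I will compute. Take $v=\sum_{i\le r} b_ix_i\partial_{x_i}+\sum_{i>r}a_i\partial_{x_i}\in T_{\underline{X}}$. By Example \ref{ex:tangent_underline}, $v$ lies in $T_{X/(\ast,\N)}$ if and only if $v(u)-(\sum b_i)u=0$ in $\OO_X$. By \eqref{eq: nabla_dual}, $\nabla_v(s)=[v,u]=v(u)-(\sum b_i)u$, but taken modulo $(\hat{x}_1,\dots,\hat{x}_r)$. So the two conditions agree except that one holds in $\OO_X$ and the other in $\OO_X/(\hat x_i)$. Once this is settled, the equivalence follows for every $v\in T_\F$, hence for $T_\F$.

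To close the gap I will argue that on $T_{\underline{X}}$ this membership condition may be tested modulo $(\hat x_i)$. One route is to use the freedom in the units: the $u_i$ may be changed by any units with product congruent to $1$ mod $(\hat x_i)$, since the map $T_{X/(\ast,\N)}\hookrightarrow T_{\underline{X}}$ depends only on the equivalence class of the log structure. Modifying $u$ within its class by a factor $1+h$ with $h\in(\hat x_i)$ should absorb the difference. A cleaner alternative is to check directly that for $v\in T_{\underline{X}}$ the function $v(u)-(\sum b_i)u$ is well defined only up to $(\hat x_i)$, because the $b_i$ are determined by $v$ only modulo terms in $\hat x_i$. Then both sides are literally the same element of $\OO_D$, and in either version the equivalence holds pointwise.
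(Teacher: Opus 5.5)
Your argument is the paper's own proof, which consists only of comparing Equation~(\ref{eq: nabla_dual}) with the condition in Example~\ref{ex:tangent_underline}. Your second way of closing the $\OO_{\underline{X}}$ versus $\OO_D$ discrepancy is the correct reading, which the paper leaves implicit: each $\delta(e_i)$, like each $b_i$, is fixed by $v$ only modulo $\mathrm{Ann}(x_i)=(\hat x_i)$, so $\sum_i\delta(e_i)=0$ can be arranged exactly when $v(u)-(\sum_i b_i)u\in(\hat x_1,\dots,\hat x_r)$. Drop the first route: rescaling $u$ while holding the $b_i$ fixed requires solving a first-order equation along $v$, and that can fail. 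For example, on $\{xy=0\}$ with $u=1$ and $v=y^2\partial_y$ written with $b_1=0$, $b_2=y$, it forces $y^2B'(y)=1$.

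The identification of $s$ is forced, not a choice of normalization. The chart $e_i\mapsto x_i/u_i$ gives the section $u^{-1}\,\overline{x}_1\otimes\cdots\otimes\overline{x}_r$ of $(T^1_{\underline{X}}|_D)^\vee$. Under the standard identification, the class of $g$ in $\OO_{\underline{X}}/(\hat x_1,\dots,\hat x_r)$ is the functional $\overline{x}_1\otimes\cdots\otimes\overline{x}_r\mapsto g$, so the dual section $s$ is the class of $u$, exactly as you used it. With $u^{-1}$ instead, the flatness condition would read $v(u)+(\sum_i b_i)u\equiv 0$ and the equivalence would break.
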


\begin{proof}
It follows from Equation (\ref{eq: nabla_dual}) and Example \ref{ex:tangent_underline}.
\end{proof}

\begin{proposition}\label{prop: linear_hol}
    Let $\F=\F_1 \sqcup_D \F_2$ be a locally d-semistable foliation constructed as the pushout of two codimension one foliations along a common smooth leaf $D$. Consider $h_i':\pi_1(D,p)\rightarrow \C^\ast$ the linear part of the holonomy of $D$ seen as a leaf of $\F_i$. Then $\F$ is d-semistable if and only if $h'_1 = (h'_2)^{-1}$.
\end{proposition}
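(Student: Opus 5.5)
By Lemma \ref{lema: connection}, $(\underline{X},\F)$ is d-semistable if and only if $T^1_{\underline{X}}|_D$ has a nowhere-vanishing $\F$-flat global section. By Remark \ref{double_crossings}, $T^1_{\underline{X}}|_D \simeq N_{D/X_1}\otimes N_{D/X_2}$. So the plan has three steps: describe the connection $\nabla$ restricted to $T_\F$ along $D$ as a flat connection on this line bundle, compute its monodromy, and read off the existence of a flat nowhere-vanishing section.

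\textbf{Step 1 (local form).} Local d-semistability supplies, near each $p\in D$, a log structure of semistable type with $T_\F\subseteq T_{X/(\ast,\N)}$. Equivalently, there is a local nowhere-vanishing $\F$-flat section $s_U$ of $T^1_{\underline{X}}|_D$. The key observation is that, since $D$ is a leaf, the vector fields in $T_\F$ restricted to $D$ span $T_D$ at smooth points. Hence $\nabla$ restricted to $T_\F$ descends to a genuine flat connection $\nabla^D$ on the line bundle $L=T^1_{\underline{X}}|_D$ over $D$. This requires checking that $\nabla_v s|_D$ depends only on $v|_D$; this follows from formula (\ref{eq: nabla_dual}) and the fact that $b_i$ vanishes on $D$ for $v$ tangent to the foliation there. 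Flatness is automatic because the $s_U$ are local flat frames. Consequently, global $\F$-flat nowhere-vanishing sections correspond exactly to global $\nabla^D$-flat frames, and these exist if and only if the monodromy representation $\mu:\pi_1(D,p)\to\C^\ast$ is trivial.

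\textbf{Step 2 (identify monodromy).} Write $L\simeq N_{D/X_1}\otimes N_{D/X_2}$. Each $N_{D/X_i}$ carries the Bott partial connection induced by $\F_i$; since $D$ is a leaf, this is a flat connection on $D$. Its monodromy is the linear holonomy $h'_i$, because the derivative of the holonomy map along a loop is parallel transport in the normal bundle. One must then check that $\nabla^D$ on $T^1$ is the tensor product of the Bott connections, or of their duals, since $T^1|_D\simeq (I_{X_1}|_D\otimes I_{X_2}|_D)^\vee$. I would verify this in local coordinates $(x_1,x_2,z)$ with $\underline{X}=\{x_1x_2=0\}$. For a field $v=b_1x_1\partial_{x_1}+b_2x_2\partial_{x_2}+\sum a_j\partial_{z_j}\in T_\F$, the Bott connection on $N_{D/X_i}$ acts on the class of $\partial_{x_i}$ through the coefficient $-b_i|_D$ (up to sign convention). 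Formula (\ref{eq: nabla_dual}) acts on $T^1$ by $v(g)-(b_1+b_2)g$, which matches the tensor product connection on $N_{D/X_1}\otimes N_{D/X_2}$. This gives $\mu=h'_1h'_2$, possibly up to an overall inversion, which does not affect triviality.

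\textbf{Conclusion and main obstacle.} Combining the two steps, $\F$ is d-semistable if and only if $h'_1h'_2=1$, that is $h'_1=(h'_2)^{-1}$. I expect the main obstacle to be Step 2. It requires consistent sign and duality conventions between the identification $T^1|_D\simeq(I_{X_1}|_D\otimes I_{X_2}|_D)^\vee$, the formula (\ref{eq: nabla_dual}), and the definition of linear holonomy. It also requires showing that the connection really descends to $D$ even at points where $T_\F$ does not restrict surjectively onto $T_D$; for this I would use the local normal forms provided by local d-semistability. I would first run the computation in the model of Example \ref{ex:babycamachosad}, where the monodromies are $e^{2\pi i\lambda_k}$, to calibrate the signs.
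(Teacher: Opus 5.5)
Your proposal is correct and follows the paper's own argument. You restrict the partial connection of $T_\F$ on $T^1_{\underline{X}}\vert_D\simeq N_{D/X_1}\otimes N_{D/X_2}$ to a connection on $D$, identify its monodromy with $h'_1h'_2$, and conclude via Lemma \ref{lema: connection}.

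Your additions fill in details the paper leaves implicit: the local check that formula (\ref{eq: nabla_dual}) is exactly the tensor product of the two Bott connections, and the formulation in terms of trivial monodromy rather than the paper's looser ``flat iff it admits a nowhere-vanishing global section''. One wording fix in the descent step: you only need $b_i\vert_D=0$ for those $v\in T_\F$ with $v\vert_D=0$ (true because $\F_i$ is regular along the leaf $D$), not for all $v\in T_\F$.
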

\begin{proof}
By the Remark \ref{double_crossings} we know that \smash{$T_{\underline{X}}^1\big\vert_D \simeq N_{D/X_1}\otimes N_{D/X_2}$}. Since $D$ is a smooth leaf of $\F_i$ we know that $T_{\F_i}|_D \simeq T_D$, and therefore restriction of the partial connection \smash{$\nabla: T_{\F}\otimes T_{\underline{X}}^1\rightarrow T_{\underline{X}}^1$} gives a connection $\nabla|_D$ on the line bundle \smash{$N_{D/X_1}\otimes N_{D/X_2}$}. The monodromy of this connections is precisely the product of the aforementioned linear holonomies $h'_1$ and $h'_2$, hence $\nabla|_D$ is flat if and only if $h'_1 = (h'_2)^{-1}$. Since a connection on $D$ is flat if and only if admits a nowhere-vanishing global section, the result follows from Lemma \ref{lema: connection}.
\end{proof}

\begin{remark} \label{remark: tangent and T1}
If $\F$ is a foliation on $\underline{X}$, and $X\to (\ast,\N)$ is a semistable log structure for $\F$, then the corresponding (relative) tangent sheaves fit in the following commutative diagram
\begin{center}
\begin{tikzcd}
            &                                               & 0 \arrow[d]                                    & 0 \arrow[d]                      &   \\
0 \arrow[r] & T_\F \arrow[r] \arrow[d, no head, double] & {T_{X/(\ast,\N)}} \arrow[r] \arrow[d]       & {N_{\F}} \arrow[r] \arrow[d] & 0 \\
0 \arrow[r] & T_\F \arrow[r]                                & T_{\underline{X}} \arrow[r] \arrow[d]          & N_{\underline{\F}} \arrow[r] \arrow[d]         & 0 \\
            &                                               & T^1_X \arrow[d] \arrow[r, no head, double] & T^1_X \arrow[d]                  &   \\
            &                                               & 0                                              & 0,                                &  
\end{tikzcd}
\end{center}
Observe that we are using a different notation for the usual normal sheaf and the one arising in the setting. 
\end{remark} 

\subsection{From local to global foliated d-semistability} Since d-semistability is a property of global nature, the local existence of semistable structures for foliations, as exemplified earlier in this section, is not enough to guarantee it. Nevertheless, in this scenario we should be able to compute the obstruction for the gluing of these log structures. In order to do so, we will follow the ideas of \cite{olsson2003universal} closely.

Given a normal crossing variety $X$ with singular divisor $D$, consider the sheaf of abelian groups $G=\ker(\OO^\ast_{\underline{X}}\to \OO^*_D)$ and its $\F$-invariant part 
$$G_\F :=\big\{u\in G:\;v(u) = 0\;\;\forall v\in T_\F\big\},$$
which is of course a subsheaf of the sheaf 
 $$\OO_{X/\F}^\ast : = \{u\in \OO_{\underline{X}}^\ast\; : \; v(u) = 0\;\forall\, v\in T_\F\}$$
 of invertible first integrals for $\F$.
 
By the proof of \cite[Theorem 3.14]{olsson2003universal} we know that the sheaf $\Aut(\M_X)$ of automorphisms of $\M_X$ is naturally isomorphic to $G$. Let us now show that under this isomorphism, the sheaf $\Aut_\F(\M_X)$ maps to $G_\F$. This correspondence can be described as follows:
any $\varphi\in\Aut(\M_U)$ is completely determined by some $u_i\in \OO_U^*$ such that $\varphi^\flat(e_i)=u_i e_i$\footnote{$\varphi^\flat(e_i)=\alpha^{-1}(u_i) e_i$}. The compatibility with respect to $\alpha:\M_U\to \OO_X$ forces $u_ix_i=x_i,$ or equivalently $(1-u_i)x_i=0$. By \cite[Lemma 3.15]{olsson2003universal}, this is equivalent to $u_i$ taking the form $u_i=1+a_ix_1\cdots x_{i-1}x_{i+1}\cdots x_r$. If we denote the product of these units by $u=u_1\cdots u_n$, the above isomorphism is
\begin{align*}
     Aut(\M_U)&\to G(U)\\
     \varphi &\mapsto u= 1+\sum a_ix_1\cdots x_{i-1}x_{i+1}\cdots x_r.
 \end{align*}
An element $\varphi\in G$ will be compatible with $\F$ exactly when 
$$ (v,\delta)(\varphi^b(e_1+\dots+e_r))=0,$$
for every $(v,\delta)\in T_\F$. Since $\varphi^b(e_1+\cdots+e_n)=u(e_1+\cdots+e_n)$, this is equivalent to $\delta(u)= v(u)u^{-1}=0,$ i.e. $u\in G_\F$.

In order to compute the desired obstruction, we will make use of the following fact.

\begin{proposition}\label{global_obs_semistability}
Let $\F$ be a foliation on a complex space $\underline{X}$ with normal crossings. If the pair $(\underline{X},\F)$ is locally d-semistable, there is a canonical obstruction in $$H^1\big(\underline{X},\OO^*_{\underline{X}/\F}/G_\F\big)$$ that vanishes if and only if $(\underline{X},\F)$ is d-semistable.
\end{proposition}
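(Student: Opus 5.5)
The plan is to recast foliated semistable log structures as torsors and then compute their obstruction with a short exact sequence of sheaves, following Olsson's approach. Let $\mathcal{S}$ denote the sheaf on $\underline{X}$ that sends an open set $U$ to the set of isomorphism classes (in the sense of Definition \ref{def:semiestable_fol}) of logarithmic structures of semistable type for $(U,\F|_U)$. By the proof of Theorem \ref{thm: semistability_T1}, a semistable log structure on $U$ together with its semistable map is the same as a nowhere-vanishing section of $T^1_{\underline{X}}|_D$ over $U$. By Lemma \ref{lema: connection}, it is compatible with $\F$ exactly when this section is $\F$-flat. So $\mathcal{S}$ is identified with the sheaf $\mathcal{T}_\F$ of nowhere-vanishing $\F$-flat sections of $T^1_{\underline{X}}|_D$. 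The pair $(\underline{X},\F)$ is d-semistable if and only if $\mathcal{S}$ has a global section.

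First I will show that $\mathcal{T}_\F$ is a torsor under a sheaf of groups. Two nowhere-vanishing sections differ by multiplication by a unit of $\OO_D$, which lifts locally to a unit $u\in\OO^\ast_{\underline{X}}$. Using Equation (\ref{eq: nabla_dual}), $\nabla_v(us)=v(u)s+u\nabla_v s$ in $T^1$. So if $s$ is $\F$-flat, then $us$ is $\F$-flat exactly when $v(u)$ vanishes in $T^1$ for all $v\in T_\F$. I will check that we may modify $u$ by an element of $G$ so that $v(u)=0$ holds on the nose, using the explicit local forms of $T_\F$ at semistable points. Then the group acting simply transitively is the image of $\OO^\ast_{\underline{X}/\F}$ in $\OO^\ast_D$. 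Its kernel is $G_\F$, so this group is $\OO^\ast_{\underline{X}/\F}/G_\F$. This agrees with the computation preceding the statement, which identifies $\Aut_\F(\M_X)$ with $G_\F$: isomorphic log structures differ by $G_\F$, and changes of the semistable map are governed by invertible first integrals. Local d-semistability says exactly that $\mathcal{T}_\F$ has local sections, so $\mathcal{T}_\F$ is a torsor under $\OO^\ast_{\underline{X}/\F}/G_\F$.

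Second, a torsor is trivial if and only if its class in $H^1\big(\underline{X},\OO^\ast_{\underline{X}/\F}/G_\F\big)$ vanishes, and this class is the canonical obstruction. Concretely, I choose a cover $\{U_\alpha\}$ with local foliated semistable structures $s_\alpha$ and write $s_\beta=\bar u_{\alpha\beta}s_\alpha$ on overlaps. The classes $\bar u_{\alpha\beta}$ form a \v{C}ech cocycle. Refining or changing the local choices changes it by a coboundary, which gives canonicity. The cocycle is a coboundary precisely when the $s_\alpha$ can be rescaled so that they glue to a global $\F$-flat nowhere-vanishing section, and by Lemma \ref{lema: connection} such a section gives a global semistable structure for $(\underline{X},\F)$.

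I expect the main obstacle to be the torsor step, and specifically the lifting: showing that every local $\F$-flat change of section is induced by a genuine invertible first integral of $\F$ on $\underline{X}$, and not merely by a unit whose derivative vanishes only modulo the ideal $(\hat x_1,\dots,\hat x_r)$. I would first try to handle this locally, using the description of $T_{X/(\ast,\N)}$ in Example \ref{ex:tangent_underline}. The identification of $\Aut_\F(\M_X)$ with $G_\F$ then pins down the kernel of the action.
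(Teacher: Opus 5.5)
The gap is the lifting step you flag at the end. It cannot be closed by local normal forms, because it is false in general.

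Your computation with Equation~(\ref{eq: nabla_dual}) is correct. If $s$ is $\F$-flat and $\bar u\in\OO_D^\ast$, then $\bar u s$ is $\F$-flat if and only if $v(\bar u)=0$ in $\OO_D=\OO_{\underline{X}}/I_D$ for all $v\in T_\F$, where locally $I_D=(\hat x_1,\dots,\hat x_r)$. So your argument shows that $\mathcal{T}_\F$ is a torsor under
\[
\mathcal{K}=\{\bar u\in \OO_D^\ast \;:\; v(\bar u)=0 \text{ in } \OO_D \text{ for all } v\in T_\F\}.
\]
By contrast, $\OO^\ast_{\underline{X}/\F}/G_\F$ is only the subsheaf of $\mathcal{K}$ formed by restrictions of invertible first integrals (Remark~\ref{remark:restriction}), and the two can differ.

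For a concrete case, let $\underline{X}=\{x_1x_2=0\}\subseteq(\C^3,0)$ and $T_\F=\OO_{\underline{X}}\,v$ with $v=-zx_1\partial_{x_1}+zx_2\partial_{x_2}+(x_1+x_2)\partial_z$. This is the central fibre of the foliation generated by the same vector field on the smoothing $\{x_1x_2=t\}$.
\begin{itemize}
\item Here $b_1=-z$ and $b_2=z$, so $b_1+b_2=0$ and the standard structure is $\F$-compatible.
\item Since $v(z)=x_1+x_2\in I_D$, $v$ acts by zero on $\OO_D$. Hence $\mathcal{K}=\OO_D^\ast$, and in particular $e^z$ is $\F$-flat.
\item On $X_1=\{x_1=0\}$ one has $v|_{X_1}=x_2(z\partial_{x_2}+\partial_z)$, whose first integrals are exactly the functions of $x_2-z^2/2$. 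So every germ in $\OO^\ast_{\underline{X}/\F,0}$ restricts to an even function of $z$ on $D$.
\end{itemize}
The structures given by $1$ and $e^z$ are therefore both compatible with $\F$, but no invertible first integral relates them. Thus $\OO^\ast_{\underline{X}/\F}/G_\F$ acts freely but not transitively, and no modification of $u$ by $G$ can achieve $v(u)=0$.

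What your argument does prove, with your cocycle paragraph unchanged, is the statement with $H^1(\underline{X},\mathcal{K})$ in place of $H^1(\underline{X},\OO^\ast_{\underline{X}/\F}/G_\F)$. The stated group is recovered exactly when $\OO^\ast_{\underline{X}/\F}\to\mathcal{K}$ is surjective. This holds in the cases treated in the examples: $D$ a leaf, where $\mathcal{K}=\underline{\C}^\ast_D$, or $\F$ transverse to $D$ in flow-box form.

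You will not find the missing step in the paper's proof either. The paper works with Olsson's sheaf of semistable maps rather than $T^1$. It obtains transitivity from the claim that $u\cdot f$ is $\F$-compatible if and only if $v(u)=0$, computed with the $\delta$ attached to $f$. But over a given $v$, the value $\delta(e_i)$ is only determined modulo $\mathrm{Ann}(x_i)=(\hat x_i)$, so the relative lift for $u\cdot f$ may be a different $\delta'$. The correct condition is your $v(u)\in I_D$. In the example, with the chart $e_i\mapsto x_i$ and $u=e^z$, take $\delta'(e_1)=-z-x_2$ and $\delta'(e_2)=z-x_1$. These satisfy $x_i\delta'(e_i)=v(x_i)$ and $\delta'(u)+\delta'(e_1)+\delta'(e_2)=0$, even though $v(u)\neq 0$.
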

\begin{proof}
Consider the presheaf $SS_{\underline{X}}$ on $\underline{X}$ defined by
$$SS_{\underline{X}}(\underline{U}) = \big\{\text{semistable maps }f:(\underline{U},\M_{U})\rightarrow (\ast,\N)\big\}/\simeq$$
where we consider 
$$f_1:(\underline{U},\M^1)\to (\ast,\N) \simeq f_2:(\underline{U},\M^2)\to (\ast,\N)$$ to be if there exists an isomorphism of log spaces commuting with the projection to $(\ast, \N)$.
From \cite[page 424]{olsson2003universal} we know that this is in fact a sheaf. If we define $SS_{(\underline{X},\F)}$ as the subsheaf of $SS_{\underline{X}}$ of isomorphism classes of those log structures of semistable type for $(\underline{X},\F)$, our aim is to prove that $SS_{(\underline{X},\F)}$ is a $(\OO_{X/\F}^\ast/G_\F)$-torsor. First of all, $SS_{(\underline{X},\F)}(\underline{U})$ is non-empty for $\underline{U}\subseteq \underline{X}$ small enough by the local d-semistability of $(\underline{X},\F)$. The sheaf of abelian groups $\OO_{X}^\ast$ acts on $SS_{\underline{X}}$ via multiplication by units: given a log smooth morphism $f:(\underline{U},\M_U)\rightarrow (\ast,\N)$ and an element $u\in \OO_X^\ast(\underline{U})$, the map $u\cdot f:(\underline{U},\M_U)\rightarrow (\ast,\N)$ is determined by
$$(u\cdot f)^\flat: f^*\M_{(\ast, \N)}\xrightarrow{\cdot u} f^*\M_{(\ast, \N)} \xrightarrow{f^\flat} \M_X.$$
The subsheaf $SS_{(\underline{X},\F)}$ is stable under multiplication by $\OO_{X/\F}^\ast$ because for each $(v,\delta)\in T_\F$ and $f\in SS_{(\underline{X},\F)}$ as above,
\begin{align*}
    0 = \delta\big((u\cdot f)^\flat(1)\big) \iff 0 = \delta(u) + \delta\big(f^\flat(1)\big) = \delta(u) \iff v(u) = 0.
\end{align*}
By \cite[Proposition 3.8]{olsson2003universal}, two log structures of semistable type are locally isomorphic (as defined in Definition \ref{def:morphism}). This implies that the action is transitive. On the other hand we have $u\cdot f = f\in SS_{\underline{X}}$ exactly when $u\in G$, and hence the stabilizer is $G_\F$. 
\end{proof}

\begin{remark} \label{remark:restriction}
The kernel of the restriction map $\OO^\ast_{\underline{X}/\F}\rightarrow \OO_{D}^\ast$ is precisely $G_\F$. With this in mind, the obstruction above maps to the one constructed in \cite{olsson2003universal} via 
$$H^1\big(\underline{X},\OO^\ast_{\underline{X}/\F}/G_\F\big)\rightarrow  H^1\big(\underline{X},\OO^\ast_{D}\big),$$
indicating that of course if $(\underline{X},\F)$ is d-semistable then so is $\underline{X}$.
\end{remark}

This obstructions are easier to compute when $\underline{X}$ has simple normal crossings and $\F$ is induced by the gluing of well-behaved foliations on its irreducible components.
Let us now see how these groups look like in a handful of examples.

\begin{example} 
Suppose we the gluing of two foliations $\F =\F_1 \sqcup \F_2$ on a simple normal crossing variety $\underline{X}=X_1\sqcup_D X_2$ such that $\F_i=\F\vert_{U_i}$ is regular and transverse to $D\subseteq X_i$, where $U_i$ is a small neighborhood of $D\subseteq X_i$.
By the transversality condition, every element restricting to $1$ on $D$ and being constant along leaves (i.e., in $G_{\F}$) will be trivial. By the previous remark, this means that $\OO_{\underline{X}/\F}^\ast\hookrightarrow \OO_D^\ast$ under the restriction map.
On the other hand, in each component we can find coordinates $(x,z)$ such that $\F$ is generated by $v=\frac{\partial}{\partial x}$ and $D=\{x=0\}$, showing that $\OO_{X_i/\F_i}\to \OO_D$ is surjective. Therefore the obstruction for the $d$-semistability of $\F$ lies on $\Pic(D) = H^1(D,\OO_D^\ast)$. This is simple the class of the line bundle $T^1_{\underline{X}}\vert_D\simeq N_{D/X_1}\otimes N_{D/X_2}$ (see \cite{kato1996dlog_def}).
\end{example}

\begin{example}\label{ex:d_semi_leaf} 
Let $\F$ be a codimension $1$ foliation on a normal crossing variety $\underline{X}$ having $D$ as a leaf (this is, $D$ is invariant by $\F$ and $T_\F\vert_D\to T_D$ is generically an isomorphism). In particular, every first integral must be locally constant along $D$, and hence the restriction morphism in Remark \ref{remark:restriction} factors through the constant sheaf: 
$$\OO^\ast_{X/\F}/G_\F\rightarrow \underline{\C}^\ast_D\subseteq \OO_{D}^\ast.$$
Since this is clearly surjective, we can conclude $\OO^\ast_{X/\F}/G_\F\simeq \underline{\C}^\ast_D$. In particular we see that in this case the obstruction above is of topological nature. Is $\F = \F_1\sqcup_D \F_2$ the gluing of two foliations along a smooth leaf, then then obstruction on $H^1(D,\C^\ast)$ for the $d$-semistability of $\F$ is precisely the quotient of the linear of holonomies of each $\F_i$ as in Proposition \ref{prop: linear_hol}. This gives plenty of $d$-semistable foliated varieties. For instance if $(\underline{X},\F)$ corresponds to the gluing of two fiber bundles along a smooth fiber, then by Example \ref{ex:glueleaf} there will exist a semistable log-structure compatible with $\F$.
\end{example}

\section{Log deformations}
Inspired by \cite{gomez1988transverse} and \cite{kato1996dlog_def}, we will now develop an infinitesimal deformation theory for log smooth foliated morphisms. 

Let $Q$ be a finitely generated integral saturated monoid whose only invertible element is $1$, and $(\ast,Q)$ the corresponding logarithmic point over $\C$, together with a log-smooth morphism $X\to (\ast,Q)$, and a foliation $\F$ on $X$ such that $T_\F\hookrightarrow T_{\underline{X}}$ factors through $T_{X/(\ast,Q)}\to T_{\underline{X}}$ (i.e., the foliation and the log smooth map are \emph{compatible}). Unless said otherwise, this data will be fixed for rest of the section.

We will consider the category $\mathcal A_Q$ of Artinian local $\C[[Q]]$-algebras with residue field $\C$, where $\C[[Q]]$ is the completion of $\C[Q]$ along the maximal ideal $\C[Q\setminus\{1\}]$.
Observe that for any object $A\in \mathcal A_Q$, the space $\Spec A$ has a natural logarithmic structure of the form $Q\oplus A^*\to A$ induced by the composition $Q\to \C[[Q]]\to A$. In order to have a cleaner notation we will often write  $(A,Q)$ for this logarithmic scheme.

\begin{definition}
    A \textit{logarithmic deformation} of $X$ over some $A\in \mathcal A_Q$ is the data of a scheme $X_A$ and a log smooth morphism $X_A\to (A,Q)$ fitting in a cartesian diagram 
    \begin{center}
        \begin{tikzcd}
            X \arrow[d]\arrow[r] & X_A \arrow[d]\\
        (\ast,Q) \arrow[r] & (A,Q).
        \end{tikzcd}
    \end{center}
    Two deformations are isomorphic if there exists an isomorphism of logarithmic schemes over $(\Spec A,Q)$ restricting to the identity on the central fiber. 
\end{definition}

It is worth pointing out that by \cite[Section 1.7]{kato1988logarithmic_str}, in the situation above we also have a canonical isomorphism $T_{X_A/(A,Q)}\vert_X\simeq T_{X/(\ast,Q)}$. Moreover, by \cite[Corollary 4.5]{kato1988logarithmic_str}
the underlying morphism of schemes $\underline{X}_A\to A$ is flat.

\begin{definition} Let $X\to(\ast,Q)$ be a log smooth morphism and $\F$ a foliation
    A \textit{logarithmic deformation} of $(X,\F)$ over $A\in \mathcal A_Q$ is a logarithmic deformation $X_A$ of $X$ together with a log foliation $\F_A$
        \[
0\rightarrow T_{\F_A}\rightarrow T_{X_A/(A,Q)}\rightarrow N_{\F_A}\rightarrow 0,
\]
    tangent to the fibers of $X_A\to (A,Q)$, restricting to $\F$ on $X$, and such that the normal sheaf $N_{\F_A}$ is torsion-less and flat over $A$. Two deformations $(X_A,\F_A)$ and $(\overline{X}_A,\overline{\F}_{A'})$ are isomorphic if there exists an isomorphism of deformations between $X_A$ and $\overline{X}_A$ sending $\F_A$ to $\overline{\F}_{A'}$ 
\end{definition}

\begin{definition}
    The \emph{log smooth deformation functor of} $(X,\F)\to (\ast,Q)$ is the contravariant functor $Def^{ls}_{(X,\F)}$ assigning to each $A\in \mathcal A_Q$ the set of equivalence classes of log-smooth deformations of $(X,\F)\to(\ast,Q)$.
\end{definition}

If $X_A\to (A,Q)$ is a logarithmic deformation of $X\to (\ast, Q)$ over some $A\in \mathcal A_Q$ and $A'\to A$ is a surjective arrow in $\mathcal A_Q$, then a lifting of $X_A$ is a deformation over $A'$ restricting to $X_A$ over $A$. Recall that an extension is \emph{small} if its kernel $J$ satisfies $J\cdot\mathfrak m_{A'}=0$, and that every extension in $\mathcal A_Q$ can be described in terms of these.
The following proposition is a good illustration of the local deformation theory of smooth logarithmic structures.

\begin{proposition}[{{\cite[Proposition 3.14]{kato1988logarithmic_str}}}] \label{prop:logdefX}
    Let $X_A\to (A,Q)$ be a logarithmic  deformation of $X\to (\ast, Q)$, and $0\rightarrow J \rightarrow A'\to A\rightarrow 0$ a small extension. Then the following holds:
    \begin{enumerate}
        \item If $X$ is Stein, a lifting $X_{A'}$ of $X_A$ over $A'$ exists and is unique up to isomorphism.
        \item The sheaf of automorphisms of the liftings $X_{A'}\to (A',Q)$ relative to $X_A\to (A,Q)$ is canonically isomorphic to $$H^0\big(X,T_{X/(\ast,Q)}\big)\otimes J.$$
        \item If non-empty, the set of liftings to $A'$ is in bijection with the vector space $$H^1\big(X,T_{X/(\ast,Q)}\big)\otimes J.$$
        \item The obstruction to finding a lifting to $A'$ lies in the group $$H^2\big(X,T_{X/(\ast,Q)}\big)\otimes J.$$
    \end{enumerate}
\end{proposition}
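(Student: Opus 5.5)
The plan is to follow Kato's strategy and reduce all four assertions to the local structure theory of log smooth morphisms. The key step is a local uniqueness statement for liftings. On a small open set $U$ of a Stein cover, the morphism $X\to(\ast,Q)$ is, by Kato's structure theorem for log smooth morphisms, étale locally (analytically, locally) strict and smooth over a toric model $\Spec\C[P]\times_{\Spec\C[Q]}(\ast,Q)$. Here $Q\to P$ is injective with torsion-free cokernel of its groupification. The base change $\Spec\C[P]\times_{\Spec\C[Q]}(A',Q)$, times a polydisc, then gives an explicit log smooth lifting over $(A',Q)$ of the restriction of $X_A$. Hence liftings exist locally.

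For uniqueness, take two liftings $X'_1,X'_2$ of $X_A|_U$ and apply the infinitesimal lifting property. The log smoothness of $X'_2\to(A',Q)$ yields a morphism $X'_1\to X'_2$ over $(A',Q)$ extending the identity of $X_A$, because $X_A|_U\hookrightarrow X'_1$ is a strict thickening. This morphism is an isomorphism by a Nakayama-type argument, since it is the identity modulo the nilpotent ideal $J\mathcal O$ and both spaces are flat over $A'$ (by the flatness statement quoted from \cite{kato1988logarithmic_str}).

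Next I compute the automorphisms. An automorphism $\phi$ of $X'$ over $(A',Q)$ restricting to the identity on $X_A$ is described by the pair $(\phi^\ast-\mathrm{id},\ \phi^\flat/\mathrm{id})$. Here $\phi^\ast(f)=f+v(f)$ and $\phi^\flat(m)=m\cdot(1+\delta(m))$, with $v(f),\delta(m)\in J\mathcal O_{X'}=J\otimes_\C\mathcal O_X$; the latter identity uses flatness and $J\mathfrak m_{A'}=0$. The homomorphism and compatibility conditions turn into the axioms of Definition~\ref{def:logderivation}: additivity of $\delta$, $\alpha(m)\delta(m)=v(\alpha(m))$, and vanishing on $Q$ because $\phi$ lies over $(A',Q)$. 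This identifies the sheaf of automorphisms with $T_{X/(\ast,Q)}\otimes J$, and global sections give (2). Checking that composition becomes addition is routine.

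Statements (3) and (4) then follow by the standard torsor and gerbe formalism. Cover $X$ by Stein opens $U_i$, where liftings $X'_i$ exist by the local step, and choose isomorphisms $\phi_{ij}:X'_j|_{U_{ij}}\to X'_i|_{U_{ij}}$. The discrepancies $\phi_{ij}\phi_{jk}\phi_{ki}$ are automorphisms, hence give a Čech $2$-cocycle with values in $T_{X/(\ast,Q)}\otimes J$. Its class is independent of choices and vanishes exactly when the $\phi_{ij}$ can be modified to glue, which gives (4). If one lifting exists, differences of gluing data form a $1$-cocycle, so the set of liftings is a torsor under $H^1(X,T_{X/(\ast,Q)})\otimes J$, which gives (3).

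For (1) in the Stein case, coherence of $T_{X/(\ast,Q)}$ (which is locally free by log smoothness) and Cartan's Theorem B kill $H^1$ and $H^2$. This gives both existence and uniqueness up to isomorphism. One should be careful to work with Čech cohomology on Stein covers, which computes sheaf cohomology by Leray.

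The main obstacle is the local existence and uniqueness step. It requires Kato's chart criterion for log smoothness and the flatness of the underlying morphism to pass from infinitesimal lifting to an actual isomorphism of analytic log spaces. I would first try to deduce everything from the lifting property plus the explicit toric local model, citing \cite{kato1988logarithmic_str} for the chart theorem.
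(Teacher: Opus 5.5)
Your proposal is correct and follows essentially the paper's route. The paper cites Kato for item (1) and otherwise appeals to the torsor formalism, identifying relative automorphisms with log derivations $(v,\delta)$ through the sequence $1\to J\otimes\mathcal{O}_X\to\mathcal{M}_{X_{A'}}\to\mathcal{M}_{X_A}\to 1$ and building the Čech $1$- and $2$-cocycles exactly as you do. One cosmetic slip: in characteristic zero Kato's chart criterion allows finite torsion in the cokernel of $Q^{\mathrm{gp}}\to P^{\mathrm{gp}}$ (it cannot always be removed), but your base-change construction of local liftings works verbatim for any such chart.
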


Although this statement follows from the general theory of torsors, with the exception of the first item as explained loc. cit., we will need to manipulate the technical caveats behind it in order to develop a treatment similar to \cite{gomez1988transverse}. Most of the computations carried out in that work translate directly to this context. One of our objectives for this section, namely the construction of an obstruction theory for our functor, requires to further expand these technicalities. It will be hence convenient to carefully adapt those ideas to our context before moving on.

The following lemma can be found in \cite[Section 3]{abramovich2010logarithmic}.

\begin{lemma}
    If $\imath: X\to \X$ is a thickening between fine log schemes then we have an exact sequence
    $$ 1 \rightarrow 1+I_{X/\X}\rightarrow \M_{\X} \xrightarrow{\imath^b} \M_{X}\rightarrow 1.$$
    If moreover $f$ is a thickening of first order then the sheaf of monoids $1+I_{X/\X}$ is isomorphic to the additive monoid $I_{X/\X}$ via the map $1+i\mapsto i$.
\end{lemma}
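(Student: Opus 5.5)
The plan is to prove the two assertions separately, working locally on $\underline{\X}$; exactness of a sequence of sheaves is a stalkwise statement.

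\textbf{Left part of the sequence.} Since $\imath$ is strict, $\M_X$ is the log structure associated to $\imath^{-1}\M_\X\to \OO_\X\to\OO_X$. Fix a local chart $P\to\Gamma(\underline{U},\M_\X)$. By strictness the composite $P\to \M_\X\to\M_X$ is again a chart. Then every section of $\M_\X$ can be written locally as $u\cdot p$ with $u\in\OO_\X^\ast$ and $p\in P$, and similarly every section of $\M_X$ as $\bar u\cdot p$ with $\bar u\in\OO_X^\ast$.

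\textbf{Surjectivity of $\imath^\flat$.} This follows because $\OO_\X^\ast\to\OO_X^\ast$ is surjective: the ideal $I=I_{X/\X}$ is nilpotent, so a lift of a unit is again a unit.

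\textbf{The kernel.} Here "kernel" means the subsheaf of $\M_\X$ acting trivially after reduction, i.e.\ those $m$ with $\imath^\flat(m)=1$. Since $\M_\X^\ast=\OO_\X^\ast$ and $\M_X^\ast=\OO_X^\ast$, such an $m$ is a unit, because $\alpha(m)$ reduces to $1$. The kernel is therefore $\ker(\OO_\X^\ast\to\OO_X^\ast)=1+I$. For the sequence to be exact in the sense appropriate for integral monoids, we also need that $\M_X$ is the quotient $\M_\X/(1+I)$, meaning two sections have the same image iff they differ by an element of $1+I$.

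\textbf{Main obstacle.} This last point is the step I expect to be hardest. To prove it, take $u\cdot p$ and $u'\cdot p'$ with the same image in $\M_X$. By the structure of associated log structures, $\M_X=(P\oplus\OO_X^\ast)/\sim$, where the relation is generated by $P\supseteq\alpha^{-1}(\OO^\ast)$. By integrality of $P$, we can reduce to the case where the images satisfy $\bar u\,p=\bar u'\,p'$ in $\M_X^{\gp}=(P^\gp\oplus\OO_X^\ast)/\cdots$. Using the same presentation of $\M_\X^{\gp}$, the discrepancy lies in the kernel of $\OO_\X^\ast\to\OO_X^\ast$, which is $1+I$. Integrality of $\M_\X$ then lets us pass from the groupification back to the monoid.

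\textbf{Second assertion.} If $I^2=0$, then $(1+a)(1+b)=1+a+b+ab=1+(a+b)$. Hence $1+i\mapsto i$ is a bijective homomorphism from the multiplicative monoid $1+I$ to the additive monoid $I$, with inverse $i\mapsto 1+i$; note that $1+i$ is a unit since $(1+i)(1-i)=1$.
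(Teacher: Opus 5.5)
Your argument is correct, but it is not the paper's route: the paper gives no proof and only refers to \cite[Section 3]{abramovich2010logarithmic}. The standard argument there avoids charts. By strictness, $\M_X$ is the log structure associated to $\beta\colon \imath^{-1}\M_{\X}\to\OO_X$, namely $\imath^{-1}\M_{\X}\oplus_{\beta^{-1}(\OO_X^\ast)}\OO_X^\ast$. Since $I$ is nilpotent, $\beta^{-1}(\OO_X^\ast)=\alpha^{-1}(\OO_{\X}^\ast)=\M_{\X}^\ast\simeq\OO_{\X}^\ast$, so $\M_X\simeq\M_{\X}\oplus_{\OO_{\X}^\ast}\OO_X^\ast$. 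A pushout along the surjection of groups $\OO_{\X}^\ast\to\OO_X^\ast$, whose kernel is $1+I$, is just the orbit quotient $\M_{\X}/(1+I)$. This gives surjectivity, the kernel and the fibres of $\imath^\flat$ at once, with no charts or groupifications. Integrality of $\M_{\X}$ is needed only to see that $1+I$ acts freely: $(1+i)m=m$ forces $i=0$ by cancellation in $\M_{\X}^{\gp}$. Freeness is the form the paper uses later, when it writes $\varphi^\flat=\mathrm{id}+\delta$ with $\delta$ uniquely determined, so you may want to add that one-line remark. Your chart-and-groupification route reaches the same conclusion with more bookkeeping (fine charts, the explicit form of $\M^{\gp}$).

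In your version, the crux is hidden in the words ``the same presentation'', and you should make it explicit.
\begin{itemize}
\item The relations defining $\M_{\X,x}^{\gp}$ and $\M_{X,x}^{\gp}$ are indexed by the face $F_x=\{q\in P:\alpha(q)\ \text{invertible at}\ x\}$.
\item These faces coincide for $\X$ and $X$, by the same unit-lifting fact you used for surjectivity.
\item Consequently a relation $(p-p',\bar u\bar u'^{-1})=(q,\alpha_X(q)^{-1})$ with $q\in F_x^{\gp}$ lifts to $u\,p=\big(uu'^{-1}\alpha_{\X}(q)\big)\,u'p'$ in $\M_{\X}^{\gp}$, where $uu'^{-1}\alpha_{\X}(q)\in 1+I$.
\item Integrality of $\M_{\X}$ then finishes the argument.
\end{itemize}
The ``reduction using integrality of $P$'' is unnecessary, since equality in $\M_X$ trivially implies equality in $\M_X^{\gp}$. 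The square-zero assertion is handled exactly right.
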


In particular, the above holds in the context where $\X'=X_{A'}\to (A',Q)$ is log smooth and $\X=X_A$ is the pullback along a small extension 
$$0\rightarrow J\rightarrow A'\rightarrow A\rightarrow 0.$$
For every automorphism $\varphi$ of $X_{A'}$ over $ (A',Q)$ relative to $A$ induces a commutative diagram
    \begin{center}
        \begin{tikzcd}
           1\arrow[r]& J\cdot \OO_X\arrow[r]\arrow[d] &\M_{X_{A'}} \arrow[d,"\varphi^b"]\arrow[r,"\imath^b"] & \M_{\X_{A}} \arrow[d,equal]\arrow[r] & 1\\
        1\arrow[r] & J\cdot \OO_X \arrow[r] &\M_{X_{A'}} \arrow[r,"\imath^b"] & \M_{X_{A}} \arrow[r] & 1,
        \end{tikzcd}
    \end{center}
where the rows are two copies of the sequence appearing in the lemma above. Since $f$ restricts to the identity on $X_{A}$, at the scheme level it must be locally of the form $id_{X_{A'}}+v_i$ for some gluing family of vector fields $v_i\in T_X\otimes J$, by Theorem \ref{prop:logdefX}.
In a similar fashion, after a bit of diagram chasing we can conclude that $\varphi^b$ must be of the form $\varphi^b=id_{\M_{X_{A'}}}+\delta$ for a monoid morphism $\delta: \M_{X_{A'}}\to J\otimes\OO_X$, that in fact factors through the arrow $i_0^b:\M_{X_{A'}}\to \M_X$, where $i_0:X\to X_{A'}$ is the inclusion of the central fiber. Using the same symbol for the corresponding arrow $\M_X\to J\otimes \OO_X$, we get a pair $(v,\delta)\in T_X$ as in Definition \ref{def:logderivation}.

With this in mind, we can now expand on the third item of Proposition \ref{prop:logdefX}.  
Consider a small extension $$0\rightarrow J\rightarrow A'\rightarrow A\rightarrow 0$$ and two liftings $X_{A'}$ and $\overline{X}_{A'}$ of a deformation $X_{A}$ over $A$. Since by Proposition \ref{prop:logdefX} these extensions are locally isomorphic, there exists an affine cover $\{U_i\}$ of $\underline{X}$ and (non-canonical) isomorphisms $\varphi_i:X_{A'}|_{U_i}\rightarrow\overline{X}_{A'}'|_{U_i}$. The compositions 
$$\varphi_{ij} = \varphi^{-1}_j\varphi_i: X_{A'}|_{U_{ij}}\rightarrow X_{A'}|_{U_{ij}}$$ 
are automorphisms satisfying the cocycle conditions $\varphi_{ik}=\varphi_{jk}\varphi_{ij}$. From the preceding discussion we know that at the level of structure sheaves and logarithmic structures we have $\varphi_{ij}^\ast = 1 + v_{ij}$ and $\varphi_{ij}^\flat = 1 + \delta_{ij}$ where $\{\theta_{ij}\}=\{(v_{ij},\delta_{ij})\}\in C^1(X,T_X\otimes J)$. It is important to point out that different choices of these local isomoprhism give the same element in $H^1(X,T_X\otimes J)$.

At the level of vector fields, on the other hand, we see that the derivative of $id+\theta_{ij}$ is $1 + [\theta_{ij},-]$ where $[-,-]$ is the log Lie bracket:

\begin{lemma} \label{lemma:Liederivative} Let $X_{A'}$ be a deformation of $X$ over $A'$, $\{\theta_{ij}\}\in C^1(\underline{X},T_X\otimes J)$ and $\varphi=1+\theta_{ij}$ the automorphism of $X_{A'}$ defined above. Then its differential is the map $d\varphi:w\mapsto w+[\theta_{ij},w]$. 
\end{lemma}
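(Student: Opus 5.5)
The plan is to compute in local coordinates on an open set $U_{ij}$ where $X_{A'}$ admits a chart, and to read the differential of $\varphi=1+\theta_{ij}$ as the induced action on log derivations by conjugation: $d\varphi(w)=\varphi^{\ast,-1}\circ w\circ\varphi^\ast$ on functions, and the analogous formula on the monoid part. Here $w=(u,\eta)$ is a relative log derivation of $X_{A'}$ over $(A',Q)$. Since $J\cdot\mathfrak m_{A'}=0$, every product of two terms lying in $J\otimes\OO_X$ vanishes. In particular $\varphi^{-1}=1-\theta_{ij}$, that is, $\varphi^{\ast,-1}=1-v_{ij}$ and $(\varphi^\flat)^{-1}=1-\delta_{ij}$. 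The target formula is therefore linear in $\theta_{ij}$, and we only have to keep first-order terms.

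First, the scalar (vector field) part. For $f\in\OO_{X_{A'}}$,
\[
(1-v_{ij})\,u\,(1+v_{ij})(f)=u(f)+u(v_{ij}(f))-v_{ij}(u(f)),
\]
up to terms in $J^2=0$. The correction term is $[u,v_{ij}](f)$. Since $v_{ij}$ takes values in $J\otimes\OO_X$, the composite only depends on $u$ modulo $\mathfrak m_{A'}$, i.e. on $u|_X$. This gives $u+[v_{ij},u]$ after fixing the sign convention for the differential: the pushforward $\varphi_*w$ versus the pullback $\varphi^*w$ differ by the sign of $\theta$. I will choose the convention matching $\varphi^\ast=1+v_{ij}$, which yields $w\mapsto w+[\theta_{ij},w]$.

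Second, the monoid part. Write $\varphi^\flat(m)=m\cdot(1+\delta_{ij}(m))$ for $m\in\M_{X_{A'}}$, using the exact sequence of the lemma from \cite{abramovich2010logarithmic} to identify $1+J\OO_X$ with $J\otimes\OO_X$. Compute the transported $\eta$ as
\[
\eta'(m)=\varphi^{\ast,-1}\big(\eta(\varphi^\flat m)\big)=(1-v_{ij})\big(\eta(m)+\eta(1+\delta_{ij}(m))\big).
\]
Axiom (2) of Definition \ref{def:logderivation} gives $\eta(1+g)=u(1+g)/(1+g)=u(g)$ for $g\in J\OO_X$. Hence
\[
\eta'(m)=\eta(m)+u(\delta_{ij}(m))-v_{ij}(\eta(m)),
\]
which is exactly the second component $u\delta_{ij}-v_{ij}\eta$ of the log bracket, with the appropriate sign. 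Finally, I check that the pair obtained is again a relative log derivation, i.e. that it kills $f^\flat$. This holds because $\delta_{ij}\circ f^\flat=0$ and $\eta\circ f^\flat=0$.

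The main obstacle is the bookkeeping of the monoid component. One has to make $\eta(1+g)=u(g)$ rigorous, using that $\alpha(1+g)=1+g$ is a unit and that $\M\to\OO$ is an isomorphism on units. One also has to fix consistently the sign and direction convention of ``differential'' so that it matches the bracket $[\theta_1,\theta_2]=([v_1,v_2],v_1\delta_2-v_2\delta_1)$. I would settle this by first testing the formula on a chart $e_k\mapsto x_k$ with $w=x_k\partial_{x_k}$.
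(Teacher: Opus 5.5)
Your proposal follows the paper's proof essentially verbatim: conjugate $w=(u,\eta)$ by $\varphi$ separately on $\OO_{X_{A'}}$ and on $\M_{X_{A'}}$, drop quadratic terms using $J\cdot\m_{A'}=0$, and use axiom (2) to get $\eta(1+g)=u(g)$, which yields $u+[u,v_{ij}]$ and $\eta+u\delta_{ij}-v_{ij}\eta$.

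Your caution about the sign is justified, and you are more accurate than the paper here. The paper's first display writes $[v_{ij},v]$ where the computation gives $[v,v_{ij}]$. Both components are those of $w+[w,\theta_{ij}]$. So the stated formula $w\mapsto w+[\theta_{ij},w]$ holds for the convention $u\mapsto\varphi^\ast\circ u\circ(\varphi^\ast)^{-1}$, $\eta\mapsto\varphi^\ast\circ\eta\circ(\varphi^\flat)^{-1}$. It does not hold for the conjugation $(\varphi^\ast)^{-1}\circ w\circ\varphi^\ast$ that you and the paper actually compute.
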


\begin{proof} We can check the identity $d\varphi(w)=[\theta_{ij},w]$ at the schematic and monoidal level separately. If $w=(v,\delta)\in T_X$ and $f\in \OO_X$, the first part follows from the identity
\begin{align*}(\varphi_{ij}^\ast)^{-1}v(\varphi^\ast_{ij}(f)) &= (1-v_{ij})v((1+v_{ij})f)\\ 
&= v(f) + [v_{ij},v](f).
\end{align*}
For the second coordinate, observe that for $m\in \M_{X_{A'}}$ we have
\begin{align*}
(\varphi^\ast)^{-1}\delta(\varphi^{\flat}(m) )&= (1-v_{ij})\delta((1+\delta_{ij}(m))m) \\
&=(1-v_{ij}) (\delta(1+\delta_{ij}(m))+\delta(m)).
\end{align*}
From the definition of log-derivations and the fact that and $J^2=0$ we can deduce that \begin{align*}
    \delta(1+\delta_{ij}(m))&=v(1+\delta_{ij}(m))(1+\delta_{ij}(m))^{-1}\\
    &=v(\delta_{ij}(m))(1-\delta_{ij}(m))\\
    &=v(\delta_{ij}(m)),
\end{align*} 
and hence 
$(\varphi^\ast)^{-1}\delta(\varphi^{\flat}(m) )=\delta(m)+v(\delta_{ij}(m))-v_{ij}\delta(m)$, finishing the proof.
\end{proof}

Now let $(X_{A'},\F_{A'})\to (A',Q)$ be a logarithmic deformation of $(X,\F)$. The restriction of this deformation to $A$ yields a diagram of the form

\begin{center}
\begin{tikzcd}
            & 0 \arrow[d]                         & 0 \arrow[d]                        & 0 \arrow[d]                         &   \\
0 \arrow[r] & T_{\F}\otimes J \arrow[r] \arrow[d] & T_{X}\otimes J \arrow[r] \arrow[d] & N_{\F}\otimes J \arrow[r] \arrow[d] & 0 \\
0 \arrow[r] & T_{\F_{A'}} \arrow[r] \arrow[d]     & T_{X_{A'}/(A',Q)} \arrow[r] \arrow[d] & N_{\F_{A'}} \arrow[r] \arrow[d]     & 0 \\
0 \arrow[r] & T_{\F_{A}} \arrow[r] \arrow[d]      & T_{X_A/(A,Q)} \arrow[r] \arrow[d]   & N_{\F_A} \arrow[r] \arrow[d]        & 0 \\
            & 0                                   & 0                                  & 0                                   &  
\end{tikzcd}
\end{center}

where the columns are obtained by tensoring by $\otimes_{\OO_{X_{A'}}}T_{X_{A'}/(A',Q)}$ the corresponding short exact sequences.

If $(\overline{X}_{A'},\overline{\F}_{A'})\to (A',Q)$ is another deformation coinciding with $(X_{A'},\F_{A'})$ over $A$, then as established above there exists an open cover $\{U_i\}$ and isomorphisms $\varphi_i:X_{A'}\vert_{U_i}\to \overline{X}_{A'}\vert_{U_i}$ over $A'$. In these opens sets, the data of the foliations is given by injections \smash{$f_i^{A'}:T_{\F_{A'}}\rightarrow T_{X'_{A'}/(A',Q)}$} and \smash{$\overline{f}_i^{A'}: T_{\overline{\F}'}\rightarrow T_{\overline{X}_{A'}/(A',Q)}$} extending $T_{\F_A}\hookrightarrow T_{X_{A}/(A,Q)}$. In particular, on $X_{A'}\vert_{U_i}$ both $T_{\F_{A'}}$ and
$\varphi_i^*T_{\overline{\F}_{A'}}$
are extensions of $T_{\F_A}$.

If $T_{\F}$ is locally free\footnote{The hypothesis on $T_{\F}$ being locally free is not indispensable at this point but reduces by a great amount the technicalities needed for part of this section - just as in \cite{gomez1988transverse}. In order to work without this assumption, one should proceed as in the proof of \cite[Lemma 2.2.6]{huybrechts2010geometry}.}, we can pick local isomorphisms $\psi_i :(\varphi_i)_\ast T_{\F_{A'}}|_{U_i} \rightarrow T_{\overline{\F}_{A'}}|_{U_i}$ and consider the compositions
\begin{equation}
(\varphi_j)_\ast T_{\F_{A'}}|_{U_{ij}} \xrightarrow{\psi_j} T_{\overline{\F}_{A'}}|_{U_{ij}} \xrightarrow{\psi_i^{-1}} (\varphi_i)_\ast T_{\F_{A'}}|_{U_{ij}}.
\end{equation}

Composing further with $(\varphi_j^{-1})_\ast$ we get   $\psi_{ij}:T_{\F_{A'}}|_{U_{ij}} \rightarrow (\varphi_{ij})_\ast T_{\F_{A'}}|_{U_{ij}}$ which is an isomorphism of $\OO_{X_{A'}}$-modules.
Equivalently, we get $A'$-linear maps of the form  $1+\beta_{ji}:T_{\F_{A'}}|_{U_{ij}} \rightarrow T_{\F_{A'}}|_{U_{ij}}$ for some  $\beta_{ji}:T_{\F_{A'}}|_{U_{ij}} \rightarrow T_{\F_{A'}}|_{U_{ij}}\otimes J$ satisfying the equation
\[
(1+\beta_{ji})(fv) = \varphi_{ij}^\ast(f) (1+ \beta_{ji})(v),
\]
which reduces to
\begin{equation*}
\beta_{ji}(fv) = f\beta_{ji}(v) + v_{ij}(f)v.
\end{equation*}
By construction, these elements satisfy the equation $(1+\beta_{ji})(1+\beta_{kj})=(1+\beta_{ki})$, which is of course equivalent to
$$ \beta_{ji}+\beta_{kj}=\beta_{ki}. $$

The identifications we made in the paragraph above also allow us to translate the gluing data corresponding to $T_{\overline{\F}_{A'}}\hookrightarrow T_{\overline{X}_{A'}/(A',Q)}$ in terms of the the cocycles defining the deformation $(X_{A'},\F_{A'})$.
Indeed, appropriately composing these inclusions with the identifications $\psi_i$ yields a map
$$(\varphi_i)_\ast T_{\F'}|_{U_i}\xrightarrow{\psi_i}  T_{\overline{\F}'}|_{U_i}\hookrightarrow T_{\overline{X}_{A'}/(A',Q)}|_{U_i}\xrightarrow{d\varphi_i^{-1}} (\varphi^{-1}_i)^\ast T_{X_{A'}/( A',Q)}|_{U_i}.$$
By adjunction, this gives a morphism $T_{\F_{A'}}\vert_{U_i}\to T_{X_{A'}/(A',Q)}\vert_{U_i}$ that coincides with $f_i^{A'}$ over $A$ and is therefore of the form  $f_i^{A'}+g_i:T_{\F_{A'}}|_{U_i}\rightarrow T_{X_{A'}/(A',Q)}|_{U_i}$, for some choice of $\{g_i\}\in C^0(X,Hom_{\OO_X}(T_{\F}, T_X)\otimes J)$.

\begin{lemma}
    Let $0\rightarrow J\rightarrow A'\rightarrow A\rightarrow 0$ be a small extension and $(X_{A'},\F_{A'})\to (A',Q)$ and $(\overline{X}_{A'},\overline{\F}_{A'})\to (A',Q)$ be two extensions of $(X_A,\F_A)$. Let $\theta_{ij}$, $f_i$, $\overline{f}_i$, $\beta_{ji}$ and $g_i$ be the elements defined above. Then
    $$ g_j-g_i= [\theta_{ij},f_j]+f_i\beta_{ij},$$
where $f_j: T_{\F}\vert_{U_j}\hookrightarrow T_{X/(\ast,Q)}\vert_{U_j}$ is the inclusion defining $\F$.
\end{lemma}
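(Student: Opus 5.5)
The identity comes from comparing two expressions for the same map on the overlap $U_{ij}$. On $U_i$ and $U_j$ we have two descriptions of the inclusion $T_{\overline{\F}_{A'}}\hookrightarrow T_{\overline{X}_{A'}/(A',Q)}$, pulled back to $X_{A'}$ via $\varphi_i$ and $\varphi_j$ respectively. These give $f^{A'}_i+g_i$ and $f^{A'}_j+g_j$, each precomposed with the appropriate identification of $T_{\F_{A'}}$. Since both come from one global morphism on $\overline{X}_{A'}$, they must agree once we pass from the $j$-chart to the $i$-chart. That change of chart is exactly conjugation by $\varphi_{ij}=1+\theta_{ij}$ on the ambient side and by $1+\beta$ on the foliation side.

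Concretely, I would write the compatibility on $U_{ij}$ as the commutative square
\[
d\varphi_{ij}\circ (f^{A'}_j+g_j) = (f^{A'}_i+g_i)\circ (1+\beta_{ij}),
\]
suitably interpreting the source through $\psi_{ij}$ and adjunction. Establishing this square is mostly bookkeeping. It amounts to unwinding the definitions of $g_i$, $g_j$ as the adjoints of $d\varphi_i^{-1}\circ\iota\circ\psi_i$ and $d\varphi_j^{-1}\circ\iota\circ\psi_j$, where $\iota$ is the inclusion for $\overline{\F}_{A'}$. It also uses $\varphi_{ij}=\varphi_j^{-1}\varphi_i$ and $\psi_{ij}=(\varphi_j^{-1})_\ast\psi_i^{-1}\psi_j$. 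I would check the orientation of indices carefully here, because the sign and the choice of $\beta_{ij}$ versus $\beta_{ji}$ in the final formula depend on it.

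Next I expand the square using Lemma \ref{lemma:Liederivative}, which gives $d\varphi_{ij}(w)=w+[\theta_{ij},w]$. Since $J\cdot\mathfrak m_{A'}=0$, all products of two terms valued in $J$ vanish. Also, $g$, $\theta$ and $\beta$ only depend on the reduction to the central fibre. So the left side becomes $f^{A'}_j+g_j+[\theta_{ij},f_j]$ and the right side becomes $f^{A'}_i+g_i+f_i\beta_{ij}$.

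On the overlap the original inclusions $f^{A'}_i$ and $f^{A'}_j$ coincide, since both are restrictions of the global inclusion $T_{\F_{A'}}\hookrightarrow T_{X_{A'}/(A',Q)}$. Subtracting therefore gives the claimed identity $g_j-g_i=[\theta_{ij},f_j]+f_i\beta_{ij}$, after moving terms to the appropriate sides. The sign conventions for $\theta$ and $\beta$ fixed earlier in the section are what determine the exact signs, and I would adjust the orientation of the square so that they match the statement.

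The main obstacle is making sense of $[\theta_{ij},f_j]$ as an element of $Hom(T_\F,T_X)\otimes J$. The bracket with a fixed log vector field is not $\OO_X$-linear, so the expression only becomes a well-defined linear map in combination with $f_i\beta_{ij}$. The non-linearity $v_{ij}(f)v$ in $\beta_{ij}(fv)$ cancels the Leibniz defect of $[\theta_{ij},fv]$. I would verify this cancellation explicitly, both on the scheme component and on the monoid component $\delta$, using the formula from Lemma \ref{lemma:Liederivative}. This is the step where the logarithmic structure genuinely enters, and it confirms that both sides of the identity live in the same group.
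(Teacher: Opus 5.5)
Your strategy is the paper's: write the compatibility of the two chart descriptions of $\overline{f}\colon T_{\overline{\F}_{A'}}\hookrightarrow T_{\overline{X}_{A'}/(A',Q)}$ as a commutative square on $U_{ij}$, expand it to first order with Lemma \ref{lemma:Liederivative} and $J\cdot\m_{A'}=0$, and use $f^{A'}_i=f^{A'}_j$ on the overlap. The gap is in the only step with real content, namely which square holds. You defer this, and the square you do write is the wrong one. For $k=i,j$, $f_k+g_k$ is the adjoint of $d\varphi_k^{-1}\circ\overline{f}\circ\psi_k$. You already quote $\varphi_{ij}=\varphi_j^{-1}\varphi_i$ and $\psi_{ij}=(\varphi_j^{-1})_\ast\psi_i^{-1}\psi_j$. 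Up to the adjunctions, one then gets directly
\[
d\varphi_{ij}\circ(f_i+g_i)\circ\psi_{ij}=d\varphi_j^{-1}\circ\overline{f}\circ\psi_i\circ\psi_i^{-1}\psi_j=f_j+g_j .
\]
So $d\varphi_{ij}$ carries the $i$-chart description to the $j$-chart one, and it must be composed with $f_i+g_i$. This is the diagram in the paper's proof, written there as $(1+[\theta_{ij},-])(f_i+g_i)(1+\beta_{ij})=f_j+g_j$, and its first-order expansion is exactly the claimed identity. Your square $d\varphi_{ij}\circ(f_j+g_j)=(f_i+g_i)\circ(1+\beta_{ij})$ composes it with $f_j+g_j$ instead. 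The two expansions you write down then give $g_j-g_i=f_i\beta_{ij}-[\theta_{ij},f_j]$. No rearrangement turns this into $[\theta_{ij},f_j]+f_i\beta_{ij}$, because the relative sign is wrong. So ``subtracting gives the claimed identity'' is false as written, and adjusting the orientation until it matches the statement fits the answer rather than proving it.

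Your last paragraph contains the tool that detects this, but you apply it with the wrong sign. The paper's convention is $\beta_{ji}(hv)=h\beta_{ji}(v)+v_{ij}(h)v$, and $v_{ji}=-v_{ij}$ since $\varphi_{ji}=\varphi_{ij}^{-1}$. Hence $\beta_{ij}(hv)=h\beta_{ij}(v)-v_{ij}(h)v$, while $[\theta_{ij},hw]=h[\theta_{ij},w]+v_{ij}(h)w$. The Leibniz defects therefore cancel in $[\theta_{ij},f_j]+f_i\beta_{ij}$, which is $\OO_X$-linear on $U_{ij}$ as $g_j-g_i$ must be. In $f_i\beta_{ij}-[\theta_{ij},f_j]$ they add up to $-2v_{ij}(h)f_i(v)$. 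So, with Lemma \ref{lemma:Liederivative} as stated, your square cannot hold, and running this check before expanding fixes the orientation.

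For the record, the paper's own labels in this passage are loose. The top arrow of its diagram is $\psi_{ij}$, which had been named $1+\beta_{ji}$. The computation in the proof of Lemma \ref{lemma:Liederivative} actually yields $w+[w,\theta_{ij}]$. These two slips compensate as far as consistency goes. Tracking every convention literally gives the identity up to an overall sign, which only amounts to a sign convention for $g_i$. All the more reason to derive the square from the definitions rather than choose it.
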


\begin{proof}
By pulling back the gluing data of $(\overline{X},\overline{\F})$ and using  Lemma \ref{lemma:Liederivative} we get a commutative diagram
   
    \begin{center}\Large
        \begin{tikzcd}
            T_{\F_{A'}}\vert_{U_{ij}}\arrow[d,"f_j+g_j"']\arrow[r, "1+\beta_{ij}"] & T_{\F_{A'}}\vert_{U_{ij}} \arrow[d,"f_i+g_i"]\\T_{X_{A'}/{(A',Q)}}|_{U_{ij}}  & T_{X_{A'}/{(A',Q)}}|_{U_{ij}} \arrow[l, "{1+[\theta_{ij},-]}"].
        \end{tikzcd}
    \end{center}
Its commutativity of the diagram is equivalent to the relation 
$$ (1+[\theta_{ij},-])(f_i+g_i)(1+\beta_{ij})=f_j+g_j,$$
which together with the fact that $J\cdot \m_A=0$ easily implies the equation in the Lemma.
\end{proof}

\begin{proposition} \label{prop:logdef}
    Let $0\rightarrow J\rightarrow A'\rightarrow A\rightarrow 0$ be a small extension and $(X_{A'},\F_{A'})\to (A',Q)$ be a (possibly non-involutive) deformation of $(X,\F)$. Then, the set of equivalence classes of deformations of $(X,\F)$ over $A'$ coinciding with $(X_{A'},\F_{A'})$ over $A$ is in bijection with the set
    $$ \Big\{(\{\theta_{ij}\},\{\overline{g_i}\})\in C^1(X,T_X\otimes J)\times C^0 (X,Hom(T_\F,N_\F)\otimes J)\,\Big\vert \,\overline{g_j}-\overline{g_i}= \overline{\, [\theta_{ij},f_j]}\Big\}\big/ \sim ,$$
    where $(\{\theta_{ij}\},\{\overline{g_i}\})\sim (\{\theta_{ij}'\},\{\overline{g_i}'\})$ if there exists some $\{w_i\}\in C^0(X,T_X)$ such that 
    $$ \theta_{ij}-\theta_{ij}'=w_i-w_j \,\, \mbox{ and }\,\, \overline{g}_i-\overline{g}_i'=\overline{\, [w_{i},f_i]}.$$
    Moreover, if $\F$ is involutive then the equivalence classes of involutive deformations correspond to the classes $(\{\theta_{ij}\},\{\overline{g}_i\})$ such that 
    $$ \overline{g}_i([v_1,v_2])-[v_1,\overline{g}_i(v_2)]+[v_2,\overline{g}_i(v_1)]=0$$
    for every pair of local sections $v_1,v_2\in T_\F$.
\end{proposition}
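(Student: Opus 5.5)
The plan is to fix the reference lifting $(X_{A'},\F_{A'})$ and use it as an origin: every other lifting $(\overline{X}_{A'},\overline{\F}_{A'})$ of $(X_A,\F_A)$ is turned into a cochain pair, and conversely every admissible pair is glued into a lifting. This is a Čech-style comparison modelled on \cite{gomez1988transverse}.

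\textbf{From a lifting to a pair.} Given another lifting, I choose a Stein cover $\{U_i\}$ with local isomorphisms $\varphi_i$, which exist by Proposition \ref{prop:logdefX}. As explained before the statement, these give a cocycle $\theta_{ij}=(v_{ij},\delta_{ij})\in C^1(X,T_X\otimes J)$, maps $\beta_{ij}$ and corrections $g_i\in C^0(X,Hom(T_\F,T_X)\otimes J)$. Composing $g_i$ with the projection $T_X\to N_\F$ gives $\overline{g}_i$. Projecting the identity $g_j-g_i=[\theta_{ij},f_j]+f_i\beta_{ij}$ of the preceding Lemma to $N_\F$ kills $f_i\beta_{ij}$, since its image lies in $T_\F\otimes J$. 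This yields exactly $\overline{g}_j-\overline{g}_i=\overline{[\theta_{ij},f_j]}$.

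\textbf{Independence of choices.} The class must not depend on the choices made.
\begin{itemize}
\item Changing $\varphi_i$ to $\varphi_i(1+w_i)$ changes $\theta_{ij}$ by $w_i-w_j$. By Lemma \ref{lemma:Liederivative} it changes $g_i$ by $[w_i,f_i]$ modulo $T_\F$.
\item Changing $\psi_i$ by $1+\gamma_i$ alters $g_i$ only by $f_i\gamma_i$, which vanishes in $N_\F$.
\end{itemize}
Isomorphic liftings differ by a global isomorphism, which can be absorbed into the $\varphi_i$ and hence gives the same class. So we get a well-defined map to the quotient by $\sim$.

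\textbf{From a pair to a lifting.} Conversely, given a pair $(\{\theta_{ij}\},\{\overline{g}_i\})$:
\begin{enumerate}
\item Reglue $X_{A'}|_{U_i}$ along the automorphisms $1+\theta_{ij}$. The cocycle condition holds because $\theta_{ij}$ is a cocycle; this is item (3) of Proposition \ref{prop:logdefX}.
\item Lift each $\overline{g}_i$ to some $g_i\in Hom(T_\F,T_X)\otimes J$; this is possible on Stein opens since $T_\F$ is locally free.
\item Define a local subsheaf as the image of $f_i^{A'}+g_i$.
\end{enumerate}
The compatibility condition says that $g_j-g_i-[\theta_{ij},f_j]$ takes values in $T_\F\otimes J$. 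Hence it equals $f_i\beta_{ij}$ for a unique $\beta_{ij}$, because $f_i$ is injective. This makes the diagram of the preceding Lemma commute, so the local subsheaves glue to a subsheaf $T_{\overline{\F}_{A'}}\subseteq T_{\overline{X}_{A'}/(A',Q)}$.

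Flatness and torsion-lessness of the quotient follow from the snake lemma applied to the $J$-extension diagram, exactly as for $\F_{A'}$. These two constructions are inverse to one another.

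\textbf{Main obstacle: the well-definedness and the bijection up to $\sim$.} One has to check carefully that the ambiguities (in $\varphi_i$, in $\psi_i$, and in the lifts of $\overline{g}_i$) account precisely for the relation $\sim$. In particular, the $\beta$'s constructed from an arbitrary lift satisfy $\beta_{ij}+\beta_{jk}=\beta_{ik}$. This should follow from the cocycle property of $\theta$ and the Jacobi identity, using injectivity of $f_i$.

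\textbf{Involutivity.} Write the involutivity of $\operatorname{im}(f_i+g_i)$ locally, using $J\cdot\m_{A'}=0$ so that only terms linear in $g_i$ survive. The bracket $[(f_i+g_i)v_1,(f_i+g_i)v_2]$ must lie in the image. Modulo $T_{\F}\otimes J$ this reduces to
$$[v_1,g_i v_2]+[g_iv_1,v_2]-g_i([v_1,v_2])\equiv 0 \pmod{T_\F},$$
using that $\F$ itself is involutive. This is the stated condition, and it is independent of the chosen lift and invariant under $\sim$, since changing $g_i$ by $[w_i,f_i]$ preserves it by the Jacobi identity.
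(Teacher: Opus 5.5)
Your proposal is correct in substance and is essentially the paper's argument. The paper's proof only says the result follows mutatis mutandis from G\'omez-Mont's Theorems 1.4 and 2.2, once Lemma \ref{lemma:Liederivative} and the identity $g_j-g_i=[\theta_{ij},f_j]+f_i\beta_{ij}$ are in place, and your $\check{C}$ech comparison is exactly that adaptation written out: you project the identity to $N_\F$, track changes of $\varphi_i$ and $\psi_i$, reglue along $1+\theta_{ij}$, and linearize the bracket using $J^2=0$.

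Three small points to tighten:
\begin{enumerate}
\item For the new normal sheaf $N'$, the snake lemma gives $0\to J\otimes N_\F\to N'\to N_{\F_A}\to 0$, and hence flatness via the local criterion. Torsion-lessness does not follow formally, since an extension of torsion-less modules over a non-reduced ring need not be torsion-less, so it needs a separate argument.
\item Your derivation of the involutivity criterion uses that the reference lifting $\F_{A'}$ is involutive, not merely $\F$. Otherwise the term $\overline{b}_i$ from the proof of Theorem \ref{teo:defobst} appears in the condition.
\item The relation $\beta_{ij}+\beta_{jk}=\beta_{ik}$ follows directly from the cocycle condition on $\theta$ and injectivity of $f_i$, without the Jacobi identity. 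It is also not needed to glue the local images into a subsheaf.
\end{enumerate}
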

\begin{proof}
   With what we have done so far, this follows mutatis mutandis from the proof of \cite[Theorem 1.4]{gomez1988transverse} and \cite[Theorem 2.2]{gomez1988transverse}. 
\end{proof}

\begin{definition}
    Let $\underline{X}$ be an affine scheme over $S$. A sheaf (or a family) of Lie algebras on $\underline{X}$ is a locally free $\OO_{\underline{X}}$-module $\mathcal G $ equipped with a $\OO_S$-linear Lie bracket satisfying the Jacobi relations. A subalgebra of $\mathcal G$ is just a locally free subsheaf closed under the Lie bracket.
\end{definition}

\begin{definition}
    Let $0\to J\to A'\to A\to 0$ be a small extension and $\underline{X}_{A'}$ be a deformation of $\underline{X}$ over $A'$. A deformation of some sheaf of algebras $\mathcal G$  along $X_{A'}$ is a family of algebras $\mathcal{G}'$ on $\underline{X}_{A'}$ restricting to $\mathcal G$ on the central fiber.
\end{definition}

Let us now first compute the obstructions to \emph{locally} extending a deformation of foliation. The  following Lemma is known for the experts, but have included a proof since we did not find a reference where the obstructions are explicitly computed. For a sheaf of subalgebras $\mathcal H\subseteq \mathcal G$ on some affine scheme $X$, we will consider the Chevalley-Eilenberg complex $ (\sHom_{\OO_X}(\bigwedge^\bullet \mathcal H,\mathcal G/\mathcal H),\delta)$ and its associated cohomology groups.

\begin{lemma}
\label{lema:defLie}    Let $\mathcal H$ be a subalgebra of a Lie algebra $\mathcal G$ and $0\to J\to A'\to A\to 0$ be a small extension. If $\mathcal{H}_A\subseteq \mathcal G_A$ is a deformation of $\mathcal H\subseteq \mathcal G$ and $\mathcal G_{A'}$ is an extension of $\mathcal G_A$ over $A'$, then the obstruction to extend $\mathcal H_A$ to some $\mathcal H_{A'}\subseteq \mathcal G_{A'}$ lies in the group $H^2(\mathcal H,\mathcal G/\mathcal H)\otimes J$, where $H^2$ stands for the second Chevalley-Eilenberg cohomology of the $\mathcal H$-module $\mathcal{G}/\mathcal H$.
\end{lemma}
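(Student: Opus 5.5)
Since $\underline{X}$ is affine and all the sheaves involved are locally free, we work with sections over $\underline{X}$ and do not have to glue. The plan is to lift $\mathcal H_A$ first as a module and only afterwards fix the bracket. Because $\mathcal H_A\subseteq\mathcal G_A$ is a locally free subsheaf with locally free quotient, the surjection $\mathcal G_{A'}\to\mathcal G_A$ admits lifts of a local basis of $\mathcal H_A$. On an affine (Stein) space the quotient $\mathcal G_A/\mathcal H_A$ is projective, so the inclusion $\mathcal H_A\hookrightarrow\mathcal G_A$ splits. This lets us choose some $\OO_{X_{A'}}$-linear lift
\[
\sigma:\mathcal H_{A'}:=\widetilde{\mathcal H}\hookrightarrow \mathcal G_{A'},
\]
where $\widetilde{\mathcal H}$ is the submodule generated by lifts of a basis. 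It is a flat deformation of $\mathcal H_A$, and its quotient $\mathcal G_{A'}/\widetilde{\mathcal H}$ is flat, hence locally free. Any two such lifts differ by an element of $\Hom_{\OO_X}(\mathcal H,\mathcal G/\mathcal H)\otimes J$. The reason is that the difference of two lifts reduces to zero over $A$, so it lands in $J\mathcal G_{A'}\simeq \mathcal G\otimes J$, and since $J\mathfrak m_{A'}=0$ it factors through the central fiber. Modulo $\widetilde{\mathcal H}$ this difference is exactly a $0$-cochain of the Chevalley--Eilenberg complex.

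Next we measure the failure of involutivity. For local sections $h_1,h_2$ of $\widetilde{\mathcal H}$, set
\[
c(h_1,h_2)=\overline{[h_1,h_2]}\in \mathcal G_{A'}/\widetilde{\mathcal H}.
\]
This vanishes modulo $J$ because $\mathcal H_A$ is closed under the bracket, so $c$ takes values in $(\mathcal G/\mathcal H)\otimes J$. We check that $c$ is alternating and $\OO_{X_{A'}}$-bilinear: the Leibniz terms $h_1(f)h_2$ lie in $\widetilde{\mathcal H}$ and disappear. Since $J$ is killed by $\mathfrak m_{A'}$, $c$ descends to an element of $\sHom(\bigwedge^2\mathcal H,\mathcal G/\mathcal H)\otimes J$. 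Here $\mathcal G/\mathcal H$ carries its $\mathcal H$-module structure $h\cdot\bar g=\overline{[h,g]}$, which is well defined precisely because $\mathcal H$ is a subalgebra.

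The key step is to show that $c$ is a Chevalley--Eilenberg $2$-cocycle. We expand $\delta c(h_1,h_2,h_3)$ and use the Jacobi identity in $\mathcal G_{A'}$: modulo $\widetilde{\mathcal H}$, the cyclic sum $\sum[h_1,[h_2,h_3]]$ vanishes. In that sum we split $[h_2,h_3]=k+j$ with $k\in\widetilde{\mathcal H}$ and $j$ a lift of $c(h_2,h_3)$. The term $[h_1,j]$ reduces to $h_1\cdot c(h_2,h_3)$. The term $[h_1,k]$, modulo $\widetilde{\mathcal H}$, is $c(h_1,k)$, and $k$ reduces over the central fiber to $[h_2,h_3]$, so it contributes $c(h_1,[h_2,h_3])$. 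Summing cyclically gives exactly $\delta c=0$. I expect this bookkeeping to be the main obstacle, in particular justifying that the error term in $k$ contributes only in $J^2=0$.

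Finally we check the effect of changing the lift. Replacing $\sigma$ by $\sigma+\phi$, with $\phi\in\Hom(\mathcal H,\mathcal G/\mathcal H)\otimes J$ realized by $h\mapsto h+\tilde\phi(h)$, changes $c$ by
\[
\overline{[h_1,\tilde\phi(h_2)]}+\overline{[\tilde\phi(h_1),h_2]}-\phi([h_1,h_2]),
\]
which is $\delta\phi$. The products of two $J$-terms vanish. Moreover, the new submodule has the quotient identified with the old one up to $J$-terms that do not affect $c$ at first order. Hence the class
\[
o=[c]\in H^2(\mathcal H,\mathcal G/\mathcal H)\otimes J
\]
is independent of all choices. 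The class $o$ vanishes if and only if some lift satisfies $c=0$, that is, some lift is closed under the bracket. Such a lift is a subalgebra $\mathcal H_{A'}\subseteq\mathcal G_{A'}$ deforming $\mathcal H_A$, which proves the lemma.
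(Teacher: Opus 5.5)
Your proposal is correct and follows essentially the same route as the paper. You lift the inclusion using local freeness of $\mathcal H_A$, measure the failure of the bracket to close up modulo the lift, get the cocycle condition from the Jacobi identity, and show that changing the lift changes the class by a Chevalley--Eilenberg coboundary. The only difference is cosmetic: you take $\overline{[h_1,h_2]}$ directly modulo $\widetilde{\mathcal H}$, whereas the paper also lifts the bracket of $\mathcal H_A$ and then notes that this choice disappears in the quotient.

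One small correction concerns your first paragraph. You do not need a locally free quotient $\mathcal G_A/\mathcal H_A$, and the paper's definition of subalgebra does not provide one (it fails for singular foliations). Lifting a local basis of $\mathcal H_A$ suffices. The local flatness criterion then makes $\widetilde{\mathcal H}\to\mathcal G_{A'}$ injective with $A'$-flat cokernel, though not necessarily locally free. That flatness is all your identification $J\cdot(\mathcal G_{A'}/\widetilde{\mathcal H})\simeq J\otimes_{\C}(\mathcal G/\mathcal H)$ actually uses.
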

\begin{proof}
Let us first observe that since $\mathcal H$ is locally free,  so is $\mathcal H_{A}$. This means that we can locally extend the morphism 
of $\OO_{X_A}$-modules $f_A:\mathcal{H}_A\hookrightarrow \mathcal{G}_A$ to some $\OO_{X_{A'}}$-linear arrow $f_{A'}:\mathcal{H}_{A'}\hookrightarrow \mathcal{G}_{A'}$.
In the same manner, we can also extend the Lie bracket
$$ [\,,]_A: \bigwedge^2\mathcal H_{A}\to \mathcal H_{A}$$
to some $\OO_{X_{A'}}$-linear
$$ \{\,,\}_{A'}: \bigwedge^2\mathcal H_{A'}\to \mathcal H_{A'}.$$
If we were too lucky, these extensions would satisfy the equation 
$$ f_{A'}\{x,y\}=[f_{A'}(x),f_{A'}(y)]_{A'},$$
providing an extension of $\mathcal H_{A}\hookrightarrow \mathcal G_A$ to $A'$. 
If this was not the case, we could aim to modify them to force this equality. We will now see that the obstruction to do so lies in the claimed group.

Let us consider the map $b:\bigwedge^2\mathcal H_{A'}\to J\otimes_{A'}(\mathcal G_{A'}/\mathcal H_{A'})\simeq J\otimes_\C \mathcal G/\mathcal H$
defined as 
$$ b(x,y)=\overline{[f_{A'}(x),f_{A'}(y)]_{A'}-f_{A'}(\{x,y\}_{A'})}.$$
Since $m_{A'}\cdot J=0$, $b$ factors through some $\overline{b}:\bigwedge^2\mathcal H\to  J\otimes_\C \mathcal G/\mathcal H$:

there are natural isomorphisms 
$$\Hom_{A'}\bigg(\bigwedge^k \mathcal H_{A'}, J\otimes_{A'} \mathcal G_{A'}/\mathcal H_{A'}\bigg)\simeq \Hom_{\C}\bigg(\bigwedge^k\mathcal H,J\otimes_\C \mathcal G/\mathcal H\bigg). $$
If $v_1,v_2,v_3\in \mathcal H$ then
\begin{align*}
    \delta\overline{b}(v_1,v_2,v_3)&= \overline{\sum_{1\leq i\leq 3} (-1)^{i+1}[f(v_i),\overline{b}(\widehat{v}_i)]+\sum_{i<j} (-1)^{i+j}\overline{b}([v_i,v_j],v_k)}\\
    &=\overline{[f(v_1),\overline{b}(v_2,v_3)]}-\overline{[f(v_2),\overline{b}(v_1,v_3)]}+ \overline{[f(v_3),\overline{b}(v_1,v_2)]}\\ &\quad\quad- \overline{b}([v_1,v_2],v_3)+ \overline{b}([v_1,v_3],v_2) - \overline{b}([v_2,v_3],v_1).
\end{align*}
In order to compute this, we can pick liftings $v'_i\in \mathcal H_{A'}$ of $v_i\in \mathcal H=\mathcal H_{A'}/\mathfrak m_{A'}\mathcal H_{A'}$, so that
\begin{align*}
\big[f(v_i),\overline{b}(v_j,v_k)\big]&=\overline{\big[f_{A'}(v'_i),[f_{A'}(v'_j),f_{A'}(v_k)]_{A'}-f_{A'}(\{v'_j,v'_k\}_{A'})\big]}_{A'}.
\end{align*}
Putting the last two expressions together and using the fact that $[ \,,]_{A'}$ satisfies de Jacobi identity gives $\delta \overline{b}=0$.

If we had chosen a different lifting for the bracket, say $\{\,,\}'_{A'}=\{\,,\}_{A'}+\eta$ for some $\eta:\bigwedge^2\mathcal H_{A'}\to J\otimes_{A'}\mathcal H_{A'}$, then 
\begin{align*}
[f_{A'}(x),f_{A'}(y)]_{A'} - f_{A'}\{x,y\}
&=[f_{A'}(x),f_{A'}(y)]_{A'}-f_{A'}(\{x,y\}'_{A'}) \\
&=[f_{A'}(x),f_{A'}(y)]_{A'}-f_{A'}(\{x,y\}_{A'})-f_{A'}(\eta(x,y)) \\
&=b(x,y)-f_{A'}(\eta(x,y)),
\end{align*}
which indeed coincides with $b$ after passing to the quotient $\mathcal G_{A'}/\mathcal H_{A'}$.

On the other hand, a different choice of $f_{A'}$ changes $\overline{b}$ by a Chevalley--Eilenberg boundary: for a morphism of the form $f'_{A'}=f_{A'}+h$, with $h:\mathcal H_{A'}\to J\otimes\mathcal G_{A'}$, we get
\begin{align*}
b'(x,y)
&=
[f'_{A'}(x),f'_{A'}(y)]_{A'}-f'_{A'}(\{x,y\}_{A'}) \\
&=
[f_{A'}(x)+h(x),\,f_{A'}(y)+h(y)]_{A'}
   -f_{A'}(\{x,y\}_{A'})-h(\{x,y\}_{A'}) \\
&=b(x,y)+[f_{A'}(x),h(y)]_{A'}-[f_{A'}(y),h(x)]_{A'}
   -h(\{x,y\}_{A'}).
\end{align*}
On $J\otimes \mathcal G_{A'}/\mathcal H_{A'}\simeq J\otimes_\C(\mathcal G/\mathcal H)$, this becomes
\[
\bar b'(x,y)
=
\bar b(x,y)
+\overline{[f(x),h(y)]}
-\overline{[f(y),h(x)]}
-\overline{h([x,y])},
\]
because $\{x,y\}_{A'}$ reduces to $[x,y]\in\mathcal H$.
Observe that the right-hand side is
$\bar b'(x,y)=\bar b(x,y)+(\delta\bar h)(x,y)$,
where $\bar h:\mathcal H\to J\otimes_\C(\mathcal G/\mathcal H)$ denotes the class of $h$. This shows that the obstruction is a well-defined class in
$
H^2(\mathcal H,\mathcal G/\mathcal H)\otimes J.
$
Moreover, this obstruction is zero precisely when $\overline{b}=\delta \overline{h} $ for some $\overline{h}:\mathcal H\to J\otimes_\C(\mathcal G/\mathcal H)$. In this case, choosing an $\OO_{X_{A'}}$-lifting
$$
h:\mathcal H_{A'}\to J\otimes_{A'} \mathcal G_{A'},
$$
we see by the argument above that $f'_{A'}=f_{A'}-h$
is a $\OO_{X_{A'}}$-linear morphism that satisfies
\[
[ f'_{A'}(x),f'_{A'}(y)]_{A'} \in f'_{A'}(\mathcal H_{A'})
\]
for every $x,y$, hence defining an extension of $\mathcal H_{A'}\subseteq \mathcal G_{A'}$ of $\mathcal H_A\subseteq \mathcal G_A$.
\end{proof}

We will now define a logarithmic version of the leaf complex (see \cite{gomez1988transverse}). For an involutive subsheaf $T_\F$ of $T_{X/(\ast,Q)}$, the Lie bracket induces a logarithmic Bott connection on $N_\F$ - just as in the case of a subalgebra of a Lie algebra treated above. With this in mind, we can define the \emph{logarithmic leaf complex}
\[
\mathcal L^\bullet(\F):\quad T_X\rightarrow \sHom_{\OO_X}\big(T_\F,N_\F\big) \rightarrow \sHom_{\OO_X}\big(\wedge^2T_\F,N_\F\big) \rightarrow \cdots,
\]
where the differential is the Chevalley-Eilenberg differential
\begin{align*}
(\delta\varphi)(v_1,\dots,v_{k+1}) = &\sum_{1\leqslant i\leqslant k+1} (-1)^{i+1}\big[\overline{v}_i,\varphi(v_1,\dots,\widehat{v}_i,\dots,v_{k+1})\big]\\
&+\sum_{1\leqslant i<j\leqslant k+1} (-1)^{i+j} \varphi\big(\big[v_i,v_j\big],v_1,\dots,\widehat{v}_i,\dots,\widehat{v}_j,\dots,v_{k+1}\big).
\end{align*}

As promised, the hypercohomology of this complex will govern the log-smooth deformations of $(X,\F)\to (\ast,Q)$. Recall that, given an Stein cover $\U = \{U_i\}$ of $\underline{X}$, the hypercohomology groups of the leaf complex can be computed as the cohomology of the total complex of the double complex \smash{$\big(C^\bullet(\U, \mathcal L^\bullet(\F)), \delta, \hat\delta\big)$} where $\hat\delta$ is the $\check{C}$ech differential.

\begin{theorem} \label{teo:defobst}
Let $X\to(\ast,Q)$ be a log-smooth morphism and $\F$ a compatible foliation, and let $0\to J\to A'\to A\to 0$ be a small extension. If there exists a deformation $(X_A,\F_A)\to (A,Q)$ of $(X,\F)\to(\ast,Q)$, then 
\begin{enumerate}
    \item if $T_\F$ is locally free, then there is a canonical obstruction $$\eta\in \H^2\big(\underline{X},\mathcal L^\ast(\F)\big)\otimes J$$ such that $\eta=0$ is and only if there exists lifting $(X_{A'},\F_{A'})\to A'$,
    \item if an extension to $A'$ exists, then the group $$\H^1\big(\underline{X},\mathcal L^\ast(\F)\big)\otimes J$$ acts freely and transitively on the set of equivalence classes of extensions, and
    \item the automorphism group of $(X_{A'},\F_{A'})$ relative to $A$ is isomorphic to $$\H^0\big(\underline{X},\mathcal L^\ast(\F)\big)\otimes J.$$
\end{enumerate}
\end{theorem}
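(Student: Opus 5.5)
We work with a fixed Stein cover $\mathcal U=\{U_i\}$ of $\underline{X}$ and compute $\mathbb H^\bullet(\underline{X},\mathcal L^\bullet(\F))$ as the cohomology of the total complex of $\big(C^\bullet(\mathcal U,\mathcal L^\bullet(\F)),\delta,\hat\delta\big)$. In low degrees the total complex is
\[
C^0(T_X)\to C^1(T_X)\oplus C^0(\sHom(T_\F,N_\F))\to C^2(T_X)\oplus C^1(\sHom(T_\F,N_\F))\oplus C^0(\sHom(\wedge^2T_\F,N_\F)).
\]
The key observation is that Proposition~\ref{prop:logdef} is this complex read in degrees $0$ and $1$. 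A pair $(\{\theta_{ij}\},\{\overline g_i\})$ satisfying $\overline g_j-\overline g_i=\overline{[\theta_{ij},f_j]}$, together with the involutivity condition $\delta\overline g_i=0$ and the cocycle condition $\hat\delta\theta=0$, is exactly a $1$-cocycle of the total complex. The equivalence $\sim$ given by $\{w_i\}$ is modification by a total coboundary. We must check that the cross term $\theta\mapsto\overline{[\theta,f]}$ is the Chevalley--Eilenberg differential $T_X\to\sHom(T_\F,N_\F)$, which is the Bott-type map $w\mapsto(v\mapsto\overline{[w,v]})$. This gives (2): once one lift $(X_{A'},\F_{A'})$ exists, the differences of lifts are torsor-classified by $\mathbb H^1\otimes J$. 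Freeness and transitivity follow from the bijection in Proposition~\ref{prop:logdef}, where the difference of the gluing data of two lifts defines the cocycle.

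For (3), an automorphism of $(X_{A'},\F_{A'})$ relative to $A$ is, by Proposition~\ref{prop:logdefX}(2) and the discussion preceding Lemma~\ref{lemma:Liederivative}, of the form $1+\theta$ with $\theta\in H^0(T_X)\otimes J$. By Lemma~\ref{lemma:Liederivative}, its differential is $1+[\theta,-]$. Preserving $T_{\F_{A'}}$ amounts to $\overline{[\theta,f(v)]}=0$ in $N_\F\otimes J$, that is, $\delta\theta=0$. These conditions say exactly that $\theta$ is a $0$-cocycle of the total complex, so the group is $\mathbb H^0\otimes J$.

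Part (1) is the main work, and we construct the obstruction cocycle as follows.
\begin{enumerate}
\item On each $U_i$, Proposition~\ref{prop:logdefX}(1) produces a lift $X_{A'}|_{U_i}$.
\item Since $T_\F$ is locally free, Lemma~\ref{lema:defLie} applies with $\mathcal H=T_\F$ and $\mathcal G=T_{X/(\ast,Q)}$. It gives a class $\overline b_i\in C^0(\sHom(\wedge^2T_\F,N_\F))\otimes J$ with $\delta\overline b_i=0$, obstructing an involutive lift of $\F_A$ on $U_i$.
\item Choose gluings $\varphi_{ij}$ of these local lifts, compatible over $A$. The failure of the cocycle condition, $\varphi_{ki}\varphi_{jk}\varphi_{ij}=1+\theta_{ijk}$, gives $\theta_{ijk}\in C^2(T_X)\otimes J$, as in the usual log deformation obstruction of Proposition~\ref{prop:logdefX}(4).
\item The failure of the chosen local extensions $f_i+\dots$ of $T_{\F_A}\hookrightarrow T_{X_A/(A,Q)}$ to match under $d\varphi_{ij}=1+[\theta_{ij},-]$ yields $\beta$-corrected elements $g_{ij}\in C^1(\sHom(T_\F,N_\F))\otimes J$, computed by the same identity as in the lemma preceding Proposition~\ref{prop:logdef}.
\end{enumerate}
We set $\eta=(\theta_{ijk},g_{ij},\overline b_i)$ and then verify three things.
\begin{enumerate}
\item $\eta$ is a total $2$-cocycle. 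The mixed identities are $\hat\delta g=\overline{[\theta_{ijk},f]}$ and $\hat\delta\overline b=\delta g$. They follow from Lemma~\ref{lemma:Liederivative}, which converts conjugation by $1+\theta$ into $[\theta,-]$, together with $J\cdot\mathfrak m_{A'}=0$.
\item Changing the choices (local lifts, gluings, extensions $f_i$, brackets) alters $\eta$ by a total coboundary. The computation of Lemma~\ref{lema:defLie} handles the $\overline b$ component, and Proposition~\ref{prop:logdef} handles the rest.
\item If $\eta=\partial(\{w_{ij}\},\{h_i\})$, then modifying $\varphi_{ij}$ by $1-w_{ij}$ and $f_i$ by $-h_i$ kills all three components. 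This produces a glued involutive deformation, with flatness of $N_{\F_{A'}}$ coming from local freeness of $T_\F$.
\end{enumerate}
The main obstacle is the first verification, and specifically the identity $\hat\delta\overline b=\delta g$ relating the Čech change of the bracket obstruction to the Chevalley--Eilenberg differential of $g$. We will approach it by working on $U_{ij}$ and transporting the local bracket extension through $1+\beta_{ij}$ and $d\varphi_{ij}$. We then apply the boundary formula $\overline b'=\overline b+\delta\overline h$ from the proof of Lemma~\ref{lema:defLie}, with $h$ being the difference of the two induced inclusions, namely $g_{ij}$.
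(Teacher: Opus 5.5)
Your proposal is correct and follows the paper's proof essentially step for step. Parts (2) and (3) are read off from Proposition~\ref{prop:logdef}, Proposition~\ref{prop:logdefX}(2) and Lemma~\ref{lemma:Liederivative}, and the obstruction is the same $\check{C}$ech--Chevalley--Eilenberg triple $(\{\theta_{ijk}\},\{\overline g_{ij}\},\{\overline b_i\})$, killed in the converse by correcting the gluings $\varphi_{ij}$ and the local inclusions $f_{A',i}$. Your route to $\hat\delta\overline b=\delta\overline g$ through the coboundary formula of Lemma~\ref{lema:defLie} repackages the paper's direct computation. One slip: on double overlaps $\varphi_{ij}$ identifies two different local lifts, so $\overline g_{ij}$ should be the class of $d\varphi_{ji}f_{A',j}\psi_{ij}-f_{A',i}$, not something computed via $1+[\theta_{ij},-]$; Lemma~\ref{lemma:Liederivative} enters only through the triple composites $1+\theta_{ijk}$.
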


\begin{proof} We will only show the first claim. The second item follows directly from Proposition \ref{prop:logdef}, while the computation of the automorphism groups is a consequence of Theorem \ref{prop:logdefX} and Lemma \ref{lemma:Liederivative}.

Let $(X_A,\F_A)\to (A,Q)$ be a deformation of $(X,\F)\to (\ast,Q)$, and let $\U=\{U_i\}$ be a Stein cover of $\underline{X}$. We will start by emulating the classical construction of the obstructions to deforming a smooth variety: by the first item in Proposition \ref{prop:logdefX}, there exist local liftings $X_{A',i}\to (A',Q)$ of $X_{A'}\vert_{U_i}$ and unique isomorphisms $\varphi_{ij}:X_{A',i}\vert_{U_{ij}}\to X_{A',j}\vert_{U_{ij}}$ satisfying $\varphi_{ij}^{-1}=\varphi_{ji}$. On the triple intersections, this yields an automorphism of $X_{A',i}$ of the form 
\begin{equation}\label{cocycle1}
1+\theta_{ijk} = \varphi_{ki}\varphi_{jk}\varphi_{ij}
\end{equation}

for some $\{\theta_{ijk}\}\in C^2(\U,T_{X/(\ast,Q)}\otimes J)$.

Being $T_\F$ locally free, we can construct local extensions  $T_{\F_{A',i}}$ of $T_{\F_A}\vert_{U_i}$ and local isomorphisms  $\psi_{ij}: T_{\F_{A'},i}\rightarrow T_{\F_{A'},j}$ 
with  $\psi_{ij}^{-1} = \psi_{ji}$ and yielding an automorphism of the form 
\begin{equation}\label{eq:beta}
    1+\beta_{ijk} = \psi_{ki}\psi_{jk}\psi_{ij}
\end{equation}
for some $\beta_{ijk}\in \mathcal \sEnd(T_\F)\otimes J$, on $U_{ijk}$. We will now proceed with the elements of the proof of Proposition \ref{lema:defLie}. If we choose  liftings (using again the fact that $T_\F$ is locally free)
$f_{A',i}:T_{\F_{A'},i}\rightarrow T_{X_{A'},i}$, then comparing them on the overlaps we get a cocycle 
\begin{equation}
   \label{cocycle2} \{\overline{g}_{ij}\}=\big\{\overline{d\varphi_{ji}f_{A',j}\psi_{ij}-f_{A',i}}\big\}\in C^1\big(\U,\sHom(T_{\F},N_{\F})\big)\otimes J.
\end{equation}

If
$\{\,,\}_i: \wedge^2 T_{\F_{A',i}}\rightarrow T_{\F_{A',i}}$ are arbitrary liftings of the Lie bracket on $T_{\F_{A,i}}$ we know that the obstruction to extending $T_{\F_{A,i}}\subseteq T_{X_{A,i}/(A,Q)}$ to a subalgebra of  $T_{X_{A',i}/(A',Q)}$  corresponds to the Chevalley-Eilenberg cohomology class of 
\begin{equation}
    \label{cocycle3}
\big\{\overline{b}_i(x,y)\big\} = \big\{\overline{[f_{A',i}(x),f_{A',i}(y)]_{A'}-f_{A',i}\{x,y\}_i}\big\}\in C^0\big(\U,\sHom(\wedge^2T_\F,N_\F)\big)\otimes J,
\end{equation}
where by a slight abuse of notation we are using the same notation for the elements $x,y\in T_{X_{A,i}/(A,Q)}$ and their restriction to $X$. We claim that the cocycles defined in (\ref{cocycle1}), (\ref{cocycle2}) and (\ref{cocycle3}) define an element of $\H^2(X,\mathcal L^\bullet(\F))$, which is precisely the obstruction to globally extending to $A'$. 

Let us first see that this indeed defines a cohomology element. This is, we need to check that
\begin{enumerate}
    \item $\hat{\delta}(\{\theta_{ijk}\}])=0$, 
    \item $\hat{\delta}(\{\overline{g}_{ij}\})= -\delta(\{\theta_{ijk}\})$,
    \item $\hat{\delta}(\{\overline{b}_i\})=\delta(\{\overline{g}_{ij}\})$, and
    \item $\delta(\{\overline{b}_i\})=0$.
\end{enumerate}
In fairness, we should also check that this element in $\H^2(\underline{X},\mathcal{L}^\bullet(\F))$ does not depend on the choices of liftings made along the way, but in order to make a clearer exposition we will leave this as an exercise to the reader.
The first of these equations follows verbatim to the case of smooth schemes \cite{sernesi2007deformations}, while the last one is explicitly done in the proof of Proposition \ref{lema:defLie}. For the second identity, observe that $\hat{\delta}(\overline{g}_{ij})$ is the projection onto $N_\F$ of 
\begin{align*}
g_{ij}+d\varphi_{ji}g_{jk}\psi_{ij}-g_{ik} & = (d\varphi_{ji}f_{A',j}\psi_{ij}-f_{A',i})+d\varphi_{ji}(d\varphi_{kj}f_{A',k}\psi_{jk}-f_{A',j})\psi_{ij}-g_{ik}\\
& = d\varphi_{ji}d\varphi_{kj}f_{A',k}\psi_{jk}\psi_{ij}-(f_{A',i}+g_{ik})\\
&= (1-[\theta_{ijk},-])d\varphi_{ki}f_{A',k}\psi_{ik}(1+\beta_{ijk})-(f_{A',i}+g_{ik}),\\
 & =(1-[\theta_{ijk},-])(f_{A',i}+g_{ik})(1+\beta_{ijk})-(f_{A',i}+g_{ik})\\
& = -[\theta_{ijk},f_{A',i}(-)]+f_{A',i}\beta_{ijk},
\end{align*}
where we only used Equations (\ref{cocycle1}), (\ref{eq:beta}) and (\ref{cocycle2}) repeatedly. Hence, $\hat{\delta}(\{\overline{g}_{ij}\})=-\delta(\{\theta_{ijk}\})$ as claimed.
 Similarly, for the third equality the element $\hat{\delta}(\{\overline{b}_i\})(x,y)$ can be computed as the projection of 
{\small
\begin{align*}
d\varphi_{ji}b_j(\psi_{ij}(x),\psi_{ij}(y))&- b_i(x,y)=d\varphi_{ji}[f_{A',j}\psi_{ij}(x),f_{A',j}\psi_{ij}(y)]_{A'}- d\varphi_{ji}f_{A',j}\{\psi_{ij}(x),\psi_{ij}(y)\}_j\\
&\qquad\qquad\qquad\qquad\qquad\qquad\qquad\,\,- [f_{A',i}(x),f_{A',i}(y)]_{A'}+f_{A',i}\{x,y\}_{A',i}\\
    & = [d\varphi_{ji}f_{A',j}\psi_{ij}(x),d\varphi_{ij}f_{A',j}\psi_{ij}(y)]_{A'}- d\varphi_{ji}f_{A',j}\{\psi_{ij}(x),\psi_{ij}(y)\}_j\\
    &\qquad\qquad\qquad\qquad\qquad\qquad\qquad\;\;\; - [f_{A',i}(x),f_{A',i}(y)]_{A'}+f_{A',i}\{x,y\}_{A',i}\\
    & = [(f_{A',i}+g_{ij})(x),(f_{A',i}+g_{ij})(y)]_{A'} - (f_{A',i}+g_{ij})\psi_{ji}\{\psi_{ij}(x),\psi_{ij}(y)\}_j\\
    &\qquad\qquad\qquad\qquad\qquad\qquad\qquad\;\;- [f_{A',i}(x),f_{A',i}(y)]_{A'}+f_{A',i}\{x,y\}_{A',i}\\
    & = [f_{A',i}(x),g_{ij}(y)]_{A'}-[f_{A',i}(y),g_{ij}(x)]_{A'} \\
    &\qquad\qquad\;-(f_{A',i}+g_{ij})\big(\psi_{ji}\{\psi_{ij}(x),\psi_{ij}(y)\}_{A',j}-\{x,y\}_{A',i}\big) - g_{ij}\{x,y\}_{A',i}\\
    & = [f_{A',i}(x),g_{ij}(y)]_{A'}-[f_{A',i}(y),g_{ij}(x)]_{A'}\\
    &\qquad\qquad\qquad\qquad\quad- f_{A',i}(\psi_{ji}\{\psi_{ij}(x),\psi_{ij}(y)\}-\{x,y\}_{A',i})- g_{ij}\{x,y\}_{A',i}
\end{align*}}
\noindent where we used Equations (\ref{cocycle1}), (\ref{cocycle2}),  (\ref{cocycle3}) and that $\psi_{ji}\{\psi_{ij}(X),\psi_{ij}(Y)\}_{A',j}-\{X,Y\}_{A',i}$ belongs to $T_{\F_{A'},i}\otimes J$, hence it vanishes we evaluate it on $g_{ij}: T_{\F_{A'},i}\rightarrow T_{A',i}\otimes J$.

In summary,  we constructed an element $[\{\theta_{ijk}\},\{\overline{g}_{ij}\}, \{\overline{b}_i\}]\in \H^2(\underline{X},\mathcal{L}^\bullet(\F))$ that vanishes if the deformation of $(X_A\F_A)$ lifts to a deformation $(X_{A'},\F_{A'})$ over $A'$. Let us argue that the opposite implication is also true. Suppose that this class is zero, so we can find pair $(\{\rho_{ij}\},\{\overline{h}_i\})$ such that
\[
\{\theta_{ijk}\} = \hat\delta(\{\rho_{ij}\}),\quad \delta(\{\rho_{ij}\}) = \hat\delta(\{\overline{h}_{i}\}),\quad\text{and}\quad \delta(\{\overline{h}_i\}) = \{\overline{b}_i\}.
\]
Then, under the modifications $\varphi_{ij}\mapsto \varphi_{ij} + \rho_{ij}$ and $f_{A',i}\mapsto f_{A',i} + h_i$, it easily follows that this collection of data yields the desired lifting (see Equation (\ref{cocycle1}) and the proof of Lemma \ref{lema:defLie}).
\end{proof}

\begin{remark}
  Although the hypothesis on $T_\F$ being locally free is indeed restrictive, it is innocuous from the point of view of the main question of this paper, namely the existence of smoothings. Indeed, if a foliation $\underline{\F}$ on a normal crossings variety $\underline{X}$ admits such a smoothing, then this condition is automatically satisfied. 
\end{remark}

\begin{remark} \label{remark:LES}
    Just as in \cite[Theorem 1.6]{gomez1988transverse}, in the case of where $\dim(\F)=1$ the spectral sequence associated to the double complex of $\check{C}$-cochains of the log-leaf complex gives rise to a long exact sequence
\begin{align*}
    H^0(X,T_X)\to H^0(X,\sHom(T_\F,N_\F))\to \H^1(X,\mathcal{L}^\bullet(\F))\to H^1(X,T_X)\to \\ 
    \to H^1(X,\sHom(T_\F,N_\F))\to \H^2(X,\mathcal{L}^\bullet(\F))\to H^2(X,T_X)\to \cdots
\end{align*} 
If $H^1(X,\sHom(T_\F,N_\F))=0$ then by Theorem \ref{teo:defobst} and Proposition \ref{prop:logdefX}, we see that $Def^{ls}_{(X,F)}$ is less (hence equally) obstructed than the functor of log smooth deformations of $X\to (\ast,Q)$. In particular, if $X$
admits a formal deformation then so does $(X,\F)$.

On the other hand, it is important to point out that that hypothesis on $T_\F$ being locally free was indeed used in the construction of the obstructions. A priori, there is no guarantee that even the sheaf $T_{\F_A}$ itself extends to $X_A'$. These extra obstructions should appear at the level of $\sExt^1(T_\F,N_\F)$. The leaf complex does not seem to perceive these subtleties, and therefore we do not expect it to provide an obstruction space in full generality.
\end{remark}

We end the analysis of the log-smooth deformations of $(X,\F)\to (\ast,Q)$ by showing the existence of versal hulls in the sense of \cite{schlessinger1968functors} (i.e., a formal ring $R\in  \widehat{\mathcal A}_Q$ and a smooth morphism $h_R\to Def^{ls}_{(X,\F)}$ inducing an isomorphism at the level of tangent spaces).

\begin{theorem}
    If $\underline{X}$ is proper, then functor  $Def^{ls}_{(X,\F)}$ admits a versal hull.
\end{theorem}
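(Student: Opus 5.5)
We will verify Schlessinger's criteria (H0)--(H3) for $Def^{ls}_{(X,\F)}$ on the category $\mathcal A_Q$. (H0) is immediate, since over $A=\C$ the only deformation is $(X,\F)$ itself.

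For (H1) and (H2), take $A'\to A$ a small extension and $A''\to A$ arbitrary in $\mathcal A_Q$. We must show that
\[
Def^{ls}_{(X,\F)}(A'\times_A A'')\to Def^{ls}_{(X,\F)}(A')\times_{Def^{ls}_{(X,\F)}(A)} Def^{ls}_{(X,\F)}(A'')
\]
is surjective, and bijective when $A=\C$.

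Given deformations $(X_{A'},\F_{A'})$ and $(X_{A''},\F_{A''})$ with an isomorphism $\phi$ of their restrictions over $A$, we glue them along $\phi$. On the underlying spaces we take the fibered product of structure sheaves $\OO_{X_{A'}}\times_{\OO_{X_A}}\OO_{X_{A''}}$. On the log structures we take the fibered product of monoid sheaves $\M_{X_{A'}}\times_{\M_{X_A}}\M_{X_{A''}}$, following Kato's argument in \cite{kato1996dlog_def}. The result is a log deformation over $(A'\times_AA'',Q)$, and it is log smooth.

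For the foliation we set
\[
T_{\F_{A'\times_A A''}}:=T_{\F_{A'}}\times_{T_{\F_A}}T_{\F_{A''}}\subseteq T_{X_{A'}/(A',Q)}\times_{T_{X_A/(A,Q)}} T_{X_{A''}/(A'',Q)}.
\]
The last sheaf is the relative log tangent sheaf of the glued family, because $T_{X/(\ast,Q)}$ is locally free and log smooth tangent sheaves commute with base change. This subsheaf is closed under the bracket componentwise. For flatness and the torsion-less property of the normal sheaf we argue locally with Schlessinger's flatness lemma for fibered products of modules. The normal sheaf is the fibered product of the $N_{\F}$'s, so it embeds into a fibered product of torsion-less sheaves and is therefore torsion-less.

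Bijectivity when $A=\C$ is equivalent to the statement that automorphisms over $A$ lift along $A''\to A$. It suffices to prove this for small extensions. By Theorem \ref{teo:defobst}(3) the relative automorphism groups are $\H^0(\underline{X},\mathcal L^\bullet(\F))\otimes J$, and restriction is surjective here: an infinitesimal automorphism over $\C$ is just an element of $\H^0\otimes J$, so no lifting issue arises. This gives (H2).

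For (H3), the tangent space $Def^{ls}_{(X,\F)}(\C[\epsilon])$ is identified with $\H^1(\underline{X},\mathcal L^\bullet(\F))$ by Theorem \ref{teo:defobst}(2), applied to the small extension $\C[\epsilon]\to\C$ starting from the trivial deformation. Since $\underline{X}$ is proper, each term $T_X$ and $\sHom(\wedge^kT_\F,N_\F)$ of $\mathcal L^\bullet(\F)$ is coherent, although the differentials are only $\C$-linear differential operators. We will use the hypercohomology spectral sequence $E_1^{p,q}=H^q(\underline{X},\mathcal L^p)\Rightarrow \H^{p+q}$. Its $E_1$ terms are finite dimensional by Cartan--Serre, so $\H^1$ is finite dimensional.

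We expect the main obstacle to be the gluing step in (H1), specifically checking that the fibered-product foliation has flat, torsion-less normal sheaf and restricts correctly. The log-structure gluing itself is standard from Kato. If local freeness of $T_\F$ is needed, as in Theorem \ref{teo:defobst}, we first reduce to that case: fibered products of locally free sheaves over a flat fibered product are locally free, which handles flatness directly.
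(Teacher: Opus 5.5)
Your proposal is essentially correct and has the same overall plan as the paper. You verify Schlessinger's conditions, obtain H3 from the finite-dimensionality of the hypercohomology of the coherent terms of $\mathcal L^\bullet(\F)$ on the proper space $\underline X$, and defer the gluing of the underlying log smooth families to Kato.

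Where you differ is the foliation part of H1/H2. The paper fixes the glued $X_B\to(B,Q)$ and invokes Pourcin's relative Douady theorem. Since the functor of subsheaves of $T_{X_B/(B,Q)}$ with flat cokernel is representable, it sends $\Spec B=\Spec A'\sqcup_{\Spec A}\Spec A''$ to a fibered product of sets, giving existence and uniqueness of $\F_B$ at once. You instead build $T_{\F_B}=T_{\F_{A'}}\times_{T_{\F_A}}T_{\F_{A''}}$ by hand and check flatness with Schlessinger's lemma on fibered products of flat modules.

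Your route is elementary and makes involutivity of $\F_B$ explicit, which the paper never checks. It also shows that properness is needed only for H3, whereas the paper's shortcut goes through a Douady-space representability result, which needs properness. To close your flatness step, you must identify the normal sheaf with $N_{\F_{A'}}\times_{N_{\F_A}}N_{\F_{A''}}$. This requires $T_{X_B/(B,Q)}$ to surject onto that fibered product. Surjectivity follows by correcting a pair of lifts with an element of $T_{\F_{A''}}$ mapping to their discrepancy in $T_{\F_A}$. Then $T_{\F_B}\otimes_B A'=T_{\F_{A'}}$, and likewise for $A''$, follows from flatness.

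Some local points need fixing.

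\emph{Injectivity for $A=\C$.} This holds simply because a deformation over $\C$ has only the identity as automorphism, since isomorphisms must restrict to the identity on the central fiber. Nothing needs lifting, and $\H^0(\underline X,\mathcal L^\bullet(\F))\otimes J$ plays no role there.

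\emph{Torsion-lessness.} Your argument does not work as stated: a sheaf that is torsion-less over $\OO_{X_{A'}}$ need not be torsion-less over $\OO_{X_B}$. For example, take $B=\C[\epsilon_1,\epsilon_2]/(\epsilon_1,\epsilon_2)^2$ and $A'=B/(\epsilon_2)$. Every $\OO_{X_B}$-linear map $\OO_{X_{A'}}\to\OO_{X_B}$ kills $\epsilon_1$. So this condition needs a separate argument, although the paper's proof does not address it either.

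\emph{Local freeness of $T_\F$.} This is a hypothesis, not something you can reduce to. If you want H3 without it, bound $\dim t_F\le h^1(T_{X/(\ast,Q)})+\dim\Hom(T_\F,N_\F)$. Use the forgetful map to log deformations of $X$, together with the fact that first-order deformations of the flat quotient $T_{X/(\ast,Q)}\to N_\F$, over a fixed first-order deformation of $X$, form a torsor under $\Hom(T_\F,N_\F)$.
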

\begin{proof}
    We will do this by showing that $Def^{ls}_{(X,\F)}$ satisfies conditions the hypotheses of Schlessinger's representability theorem \cite[Theorem 2.11]{schlessinger1968functors}. We will begin by noting that the groups $\H^i(X,\mathcal L^\bullet(\F))$ are finite-dimensional since they can be computed through a spectral sequence whose second page consists of the cohomology groups of the (coherent) sheaves appearing in the leaf complex.

In order to show that condition $H_1$ (resp. $H_2$) holds, we need to check that if $(X_{A'},\F_{A'})\to (A',Q)$ and $(X_{A''},\F_{A''})\to (A'',Q)$ (resp. take $A'=\C[\varepsilon]/(\varepsilon^2)$)
are smooth liftings of $(X_{A},\F_A)\to (A,Q)$, then there exists a common (resp. unique) lifting over $(B=A'\times_A A'',Q)$. Since by \cite[Theorem 8.7]{kato1996dlog_def} the corresponding non-foliated functor admits a hull, we know that these conditions hold for the log-smooth maps only.

Let $X_B\to (B,Q)$ be a (resp. the unique) lifting. We are left to analyze the existence of a lifting $\F_B$. To do this, observe that by construction the coherent sheaf $T_{X_B/(B,Q)}$ restricts to the corresponding relative tangent sheaves over $A'$ and $A''$. By \cite[Theor\'eme 1]{pourcin1969theoreme}, the functor assigning to each $f:S\to B$ the set of subsheaves of $f^*T_{X_B/(B,Q)}$ on $f^*X_B$ with flat cokernel and restricting to $T_\F$ on the central fiber is representable, and hence conditions $H_1$ and $H_2$ holds in this context, finishing the proof.
\end{proof}

\begin{corollary}\label{cor:smoothing}
Let $X$ be a proper variety with normal crossings and $(X,\F)\to (\ast,\N)$ be a log smooth morphism, with $T_{\underline{\F}}$ locally free. If there exists a formal deformation $(\widehat{X},\widehat{\F})\to (\Spec(\C[[t]]),\N)$ then $(X,\F)$  admits a smoothing. In particular, if $(\underline{X},\underline{\F})$ is $d$-semistable and $\H^2(X,\mathcal L^\bullet(\F))=0$ for the corresponding logarithmic structure, then it admits a smoothing. 
\end{corollary}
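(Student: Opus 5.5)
The argument has two parts: first algebraize the formal deformation, then check that the resulting family really smooths $(X,\F)$.

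\emph{Algebraization.} The formal deformation $(\widehat{X},\widehat{\F})\to(\Spec\C[[t]],\N)$ is a compatible system of log smooth deformations $(X_n,\F_n)\to(\C[t]/(t^{n+1}),\N)$. Since $\underline{X}$ is proper, I would use Grauert's existence theorem for formal complex spaces. Following Kawamata–Namikawa \cite{kawamata1994logarithmic} and Kato \cite{kato1996dlog_def}, in the log setting this yields a flat family $\underline{\X}\to\Delta$ over a small disc. It induces the given formal deformation, and the log structure is pulled back from $(\Delta,0)$, since $\X\to(\Delta,\N)$ is semistable with total space smooth near the central fibre. The foliation is handled by the Douady-type representability already used in the versal hull theorem \cite{pourcin1969theoreme}. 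Consider the relative Douady space of quotients of $T_{\X/\Delta}(-\log)$ that are flat with torsion-less fibres. The compatible family $\{T_{\F_n}\subseteq T_{X_n}\}$ is a formal point of it over $\C[[t]]$, and Artin approximation gives a convergent point over a possibly smaller disc that agrees with it to any prescribed order. Involutivity is a closed condition, given by the vanishing of the induced map $\wedge^2T_{\F_\Delta}\to N_{\F_\Delta}$. That map vanishes formally, so it vanishes identically on the convergent family once we choose the convergent point to lie in the closed subspace of involutive subsheaves. This produces $(\X,\F_\Delta)\to\Delta$ restricting to $(X,\F)$.

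\emph{Smoothing.} Because the total log structure is the divisorial one of the central fibre and the family is log smooth over $(\Delta,\N)$, the fibres over $t\neq0$ are smooth. The sheaf $T_{\F_\Delta}$ is locally free near $X$, since it deforms the locally free $T_{\underline{\F}}$. Hence $\F_\Delta|_{X_t}$ is a foliation on the smooth fibre $X_t$, and the family is a smoothing in the sense of the introduction.

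\emph{Second assertion.} Assume $(\underline{X},\underline{\F})$ is $d$-semistable and fix a semistable log structure $X\to(\ast,\N)$ for $\F$, so that $\F$ is a compatible log foliation. Apply Theorem~\ref{teo:defobst} to the small extensions $\C[t]/(t^{n+2})\to\C[t]/(t^{n+1})$ in $\mathcal A_{\N}$, with $J\simeq\C$. Local freeness of $T_\F$ is assumed, and every obstruction lies in $\H^2(X,\mathcal L^\bullet(\F))\otimes J=0$. Starting from $(X,\F)$ over $\C$, induction gives a compatible system of liftings, i.e.\ a formal deformation over $\C[[t]]$, and the first part applies.

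The main obstacle is the algebraization step. Grauert's theorem handles the underlying spaces, but making the log structure and the foliation converge at the same time is the delicate part. I would first try to realize everything inside one analytic parameter space: the versal family of Kato's log deformation functor together with a relative Douady space for $\F$. Then Artin's approximation theorem gives a convergent arc through the formal arc, which makes the whole construction convergent at once.
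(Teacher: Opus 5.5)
The genuine gap is in the algebraization step, and it makes your smoothing argument circular. Your deduction of the second assertion from the first, by iterating Theorem~\ref{teo:defobst} along $\C[t]/(t^{n+2})\to\C[t]/(t^{n+1})$, is correct and is exactly what the paper does. Your remark that involutivity must be imposed as a closed condition is a point the paper passes over.

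Grauert's theorem gives the existence of a Kuranishi space. It does not give a convergent family over a disc that \emph{induces} a prescribed formal deformation over $\C[[t]]$, and such a family need not exist: a formal arc in an analytic parameter space need not be convergent. All you can get, via Artin approximation, is a convergent family agreeing with the formal one up to a prescribed finite order. So your assertion that $\X\to(\Delta,\N)$ is semistable with smooth total space is unsupported. It is precisely what has to be proved, and your ``Smoothing'' paragraph takes it as input.

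The missing idea is why first-order agreement already suffices. A log smooth deformation of the semistable $X$ over $(\C[t]/(t^2),\N)$ is locally $x_1\cdots x_r=tu$ with $u$ a unit. Hence the underlying first-order deformation maps to a nowhere-vanishing section of $T^1_{\underline{X}}$. Any family with the same first-order term is locally $x_1\cdots x_r=t\,h(x,t)$ with $h(x,0)$ a unit along $D$. Its total space is therefore smooth near $X$, $t$ is locally a monomial times a unit, and by properness the fibres over small $t\neq 0$ are smooth. This is what the paper's phrase ``coinciding with the formal family at first order, i.e.\ a smoothing'' rests on, as in \cite{kawamata1994logarithmic}.

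The same issue breaks your treatment of the foliation. The convergent $\X$ agrees with $\widehat{X}$ only to finite order, so the system $\{T_{\F_n}\subseteq T_{X_n}\}$ is not a formal point of a Douady space relative to $\X/\Delta$. You therefore cannot first fix $\X$ and then approximate $\F$; the pair must be approximated at once in a single analytic parameter space. Your closing suggestion, a versal family for Kato's log deformation functor, does not supply one: \cite{kato1996dlog_def} only gives a formal hull, while Artin approximation needs analytic equations.

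The paper's way out is to forget the log structures. The inclusions $T_{\F_n}\subseteq T_{X_n/(\Spec(\C[t]/(t^n)),\N)}\subseteq T_{X_n/\Spec(\C[t]/(t^n))}$ turn the formal pair into a formal arc in the relative Douady space $D\to K$ of \cite{pourcin1969theoreme}. This space parametrizes subsheaves with flat cokernel of the relative tangent sheaf of Grauert's Kuranishi family $\X\to K$ of $\underline{X}$, and it is an analytic germ. Apply Artin approximation there, restricted to the closed subspace of involutive subsheaves if you want to keep involutivity. This gives a convergent arc agreeing with the formal one to first order, and the $T^1$ argument above shows that the resulting family is a smoothing.
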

\begin{proof}
Let us denote $\X\to K$ the Kuranishi space of $X$, and $(\X,\mathcal H)\to D$ the universal subsheaf of $T_{\X/B}$ mentioned above. 
Since the formal deformation is log smooth, for every $n>0$ we have inclusions 
$$ T_{\F_n} \subseteq T_{X_n/(\Spec(\C[t]/(t^n)),\N)}\subseteq T_{X_n/\Spec(\C[t]/(t^n))}.$$
As a consequence, if we forget the logarithmic structures then the data of $\widehat{\underline{X}}$ and $\widehat{\underline{\F}}$ yields a morphism  $\Spec(\C[[t]])\to D$. By Artin's Approximation Theorem \cite[Theorem 1.2]{artin1968solutions}, there exists a convergent family $(X_\Delta,\F_\Delta)\to \Delta$ coinciding with the formal family at first order, i.e. a smoothing of $(\underline{X},\underline{\F})$. 
The second claim is straightforward from Theorem \ref{teo:defobst}.
\end{proof}

\begin{example}    
Let $\underline{X} = X_1 \sqcup_{\P^1} X_2$ be the gluing of two $\P^1$ bundles $X_i\to C_i$ along a fiber $D$, where $C$ is a smooth curve.
Let $\F_1$ and $\F_2$ the foliations induced by the rulings of $X_1$ and $X_2$ respectively. The pushout foliation $\F = \F_1\sqcup_{\P^1} \F_2$ is d-semistable thanks to Proposition \ref{prop: linear_hol}.  Let us denote $X$ the corresponding logarithmic variety, smooth of the log point. Observe that Lemma \ref{T_F_loc_free} implies that $T_\F$ coincides with the line bundle $T_{\F_1}\times_D T_{\F_2}$.  \

Gluing the curves $C_i$ along the distinguished points we get a  $\P^1$-bundle $\pi: X\rightarrow C = C_1\sqcup_p C_2$ inducing the foliation $\F$, with log-normal sheaf $\pi^\ast T_{C/(\ast,\N)}$.
Let us analyze when $H^1\big(X,\sHom(T_\F, T_{X/(\ast,\N)}/T_\F)\big) = 0$. By Remark \ref{remark:LES} this will imply that the obstruction to extending a log smooth deformation of $(X,\F)$ maps injectively to the obstruction to only deforming $X$, and hence $\F$ will deform along any formal deformation of $X$. Since $X$ admits a smoothing \cite[Chapter V]{persson1977degenerations}, then by Corollary \ref{cor:smoothing} this will guarantee the existence of a smoothing of $(\underline{X},\underline{\F})$.

Observe that by adjunction
$$
    H^1\big(X,\sHom(T_\F, T_{X/(\ast,\N)}/T_\F)\big) = H^1\big(C,\sHom(\pi_\ast T_{X/C},T_{C/(\ast,\N)})\big),
$$
where in the last equality we used that $R\pi_\ast \OO_X = \OO_C$.  Since $C$ is Cohen-Macaulay, the sheaf $\sHom(\pi_\ast T_{X/C},T_{C/(\ast,\N)})$ is locally free and $\Omega^1_{C/(\ast,\N)}$ is a dualizing sheaf for $C$ \cite[Proposition 7.70]{felten2025global}. With this in mind, Serre-Grothendieck duality implies that
\[
 H^1\big(C,\sHom(\pi_\ast T_{X/C},T_{C/(\ast,\N)})\big) = H^0\big(C,\pi_\ast T_{X/C}\otimes (\Omega^1_{C/(\ast,\N)})^{\otimes 2}\big)^\vee.
\]
A global section of this bundle corresponds to global sections of its restrictions to the components $X_1$ and $X_2$ that coincide along $D$. These new sheaves are $\pi_*T_{X_i/C_i}\otimes \Omega^1_{C_i}(\log \,p)^{\otimes 2}$, and it may be the case that they do not admit global sections.

If, for instance, the fibrations are Hirzebruch surfaces $X_i=\P(\OO_{\P^1}\oplus \OO_{\P^1}(n))\to \P^1$, from the Euler exact sequence of a projective bundle follows that 
$$\pi_\ast T_{X_i/\P^1} \simeq \sEnd(E)/\OO_{C_i}\operatorname{id}_E = \OO_{\P^1}\oplus  \OO_{\P^1}(n) \oplus \OO_{\P^1}(-n).$$ But then 
$$\pi_*T_{X_i/C_i}\otimes \Omega^1_C(\log \,p)^{\otimes 2}\simeq \OO_{\P^1}(-2) \oplus \OO_{\P^1}(n-2)\oplus \OO_{\P^1}(-n-2),$$
showing that the desired cohomology group vanishes whenever $n=0,1$, i.e., when $X_i$ is either $\P^1\times \P^1$ or $Bl_p\P^2$. 
\end{example}

Based on its non-foliated counterparts, we believe that the notions developed in this work are indeed necessary to approach existence of smoothings for normal crossings foliated pairs. However, in order to give a full answer to this issue one should restrict the analysis to a special family, for instance log Calabi-Yau foliations by means of the $T^1$-lifting technique \cite{kawamata1994logarithmic}. Hopefully, this will be the subject of future work.

    \bibliographystyle{alpha}
    \bibliography{bibliography.bib}

\end{document}

%% file: packages.tex
\usepackage[utf8]{inputenc} \usepackage[english]{babel} \usepackage[reqno]{amsmath} \usepackage{amsfonts,amssymb,amsthm} \usepackage{mathtools} \usepackage{amstext} \usepackage{amscd} 
\usepackage[T1]{fontenc} \usepackage{thmtools} \usepackage{enumitem} 
\usepackage{palatino} \usepackage{amsthm} 
\usepackage{euler} \usepackage{calrsfs} \usepackage{tikz-cd} \usepackage{graphicx} \usepackage{relsize} \usepackage{multicol} \usepackage[margin=1.6in]{geometry} \usepackage{accents} 
\usepackage{bm} \usepackage{dirtytalk} \usepackage{lipsum} \usepackage{tcolorbox} \usepackage{tikz-cd} \usetikzlibrary{patterns,decorations.pathreplacing,babel} \usetikzlibrary{shapes,intersections} 
\usepackage{tabularx} \usepackage{tikz-cd} 
\usepackage{hyperref} \hypersetup{ colorlinks=true, linkcolor=purple, filecolor=magenta, 
} 
\usepackage{xcolor} \usepackage{listings} \usepackage{inconsolata} \usepackage{float} \usepackage[labelfont=bf]{caption} 
\usepackage[square,numbers]{natbib} 
\usepackage{parskip} \usepackage[official]{eurosym} \usepackage{todonotes} \usepackage{csquotes} \usepackage{tocbasic}

%% file: commands.tex
\newcommand\C{\mathbb{C}}
\newcommand\N{\mathbb{N}}
\newcommand\Z{\mathbb{Z}}

\newcommand\Q{\mathbb{Q}}

\renewcommand\H{\mathbb{H}}

\newcommand\m{\mathfrak{m}}

\renewcommand\P{\mathbb{P}}
\newcommand{\T}{\mathbb{T}}

\newcommand{\OO}{\mathcal{O}}
\newcommand{\gp}{\mathrm{gp}}

\DeclareMathOperator{\Spec}{Spec}

\DeclareMathOperator{\Res}{Res}

\newcommand{\X}{\mathcal{X}}

\newcommand{\F}{\mathscr{F}}

\newcommand{\M}{\mathcal{M}}
\newcommand{\U}{\mathcal{U}}

\renewcommand{\H}{\mathbb{H}}
\renewcommand\v{\raise0.9ex\hbox{$\scriptscriptstyle\vee$}}

\DeclareMathOperator{\Hom}{Hom}

\DeclareMathOperator{\Ext}{Ext}
\DeclareMathOperator{\Aut}{Aut}
\DeclareMathOperator{\Pic}{Pic}

\DeclareFontFamily{OT1}{pzc}{}
\DeclareFontShape{OT1}{pzc}{m}{it}{<-> s * [1.100] pzcmi7t}{}
\DeclareMathAlphabet{\mathpzc}{OT1}{pzc}{m}{it}

\DeclareMathOperator{\sHom}{\mathpzc{Hom}}
\DeclareMathOperator{\sEnd}{\mathpzc{End}}
\DeclareMathOperator{\sExt}{\mathpzc{Ext}}

\usepackage{hyperref}

%% file: theoremstyle.tex
\newtheorem{theorem}{Theorem}[section]
\newtheorem{proposition}[theorem]{Proposition}
\newtheorem{lemma}[theorem]{Lemma}
\newtheorem{corollary}[theorem]{Corollary}

\theoremstyle{definition}

\newtheorem{definition}[theorem]{Definition}
\newtheorem{remark}[theorem]{Remark}

\newtheorem{example}[theorem]{Example}

\newtheoremstyle{named}{}{}{\itshape}{}{\bfseries}{.}{.5em}{\thmnote{#3}}

\theoremstyle{named}

%% file: bibliography.bib
@incollection{pereira2024closed,
  title={Closed meromorphic 1-forms},
  author={Pereira, Jorge Vit{\'o}rio},
  booktitle={Handbook of Geometry and Topology of Singularities V: Foliations},
  pages={447--499},
  year={2024},
  publisher={Springer}
}

@book{felten2025global,
  title={Global logarithmic deformation theory},
  author={Felten, Simon},
  year={2025},
  publisher={Springer}
}

@book{camacho1987pontos,
  title={Pontos singulares de equa{\c{c}}{\~o}es diferenciais anal{\'\i}ticas},
  author={Camacho, C{\'e}sar and Sad, Paulo},
  volume={16},
  year={1987},
  publisher={Instituto de Matem{\'a}tica Pura e Aplicada}
}

@article{deligne1970equations,
  title={Equations diff{\'e}rentielles {\`a} points singuliers r{\'e}guliers},
  author={Deligne, Pierre},
  journal={Lecture Notes in Math},
  volume={163},
  year={1970}
}

@article{schlessinger1985lie,
  title={The Lie algebra structure of tangent cohomology and deformation theory},
  author={Schlessinger, Michael and Stasheff, James},
  journal={Journal of Pure and Applied Algebra},
  volume={38},
  number={2-3},
  pages={313--322},
  year={1985},
  publisher={Elsevier}
}

@article{kawamata1994logarithmic,
  title={Logarithmic deformations of normal crossing varieties and smoothing of degenerate {C}alabi-{Y}au varieties},
  author={Kawamata, Yujiro and Namikawa, Yoshinori},
  journal={Inventiones mathematicae},
  volume={118},
  number={1},
  pages={395--409},
  year={1994},
  publisher={Springer-Verlag Berlin/Heidelberg}
}

@article{olsson2003universal,
  title={Universal log structures on semi-stable varieties},
  author={Olsson, Martin C.},
  journal={Tohoku Mathematical Journal, Second Series},
  volume={55},
  number={3},
  pages={397--438},
  year={2003},
  publisher={Mathematical Institute, Tohoku University}
}

@article{kato1988logarithmic_str,
  title={Logarithmic structures of fontaine-illusie},
  author={Kato, Kazuya},
  journal={Algebraic Analysis, Geometry and Number Theory},
  pages={191--224},
  year={1988}
}

@article{kato1996dlog_def,
  title={Log smooth deformation theory},
  author={Kato, Fumiharu},
  journal={Tohoku Mathematical Journal, Second Series},
  volume={48},
  number={3},
  pages={317--354},
  year={1996},
  publisher={Mathematical Institute, Tohoku University}
}

@book{ogus2018lectures,
  title={Lectures on logarithmic algebraic geometry},
  author={Ogus, Arthur},
  volume={178},
  year={2018},
  publisher={Cambridge University Press}
}

@article{abramovich2010logarithmic,
  title={Logarithmic geometry and moduli},
  author={Abramovich, Dan and Chen, Qile and Gillam, Danny and Huang, Yuhao and Olsson, Martin and Satriano, Matthew and Sun, Shenghao},
  journal={arXiv preprint arXiv:1006.5870},
  year={2010}
}

@article{friedman1983global,
  title={Global smoothings of varieties with normal crossings},
  author={Friedman, Robert},
  journal={Annals of Mathematics},
  volume={118},
  number={1},
  pages={75--114},
  year={1983},
  publisher={JSTOR}
}

@article{Quallbrunn_2015,
   title={Families of distributions and Pfaff systems under duality},
   journal={Journal of Singularities},
   author={Quallbrunn, Federico},
   year={2015}
}

@book{sernesi2007deformations,
  title={Deformations of algebraic schemes},
  author={Sernesi, Edoardo},
  volume={334},
  year={2007},
  publisher={Springer Science \& Business Media}
}

@book{huybrechts2010geometry,
  title={The geometry of moduli spaces of sheaves},
  author={Huybrechts, Daniel and Lehn, Manfred},
  year={2010},
  publisher={Cambridge University Press}
}

@article{gomez1988transverse,
  title={The transverse dynamics of a holomorphic flow},
  author={G{\'o}mez-Mont, Xavier},
  journal={Annals of mathematics},
  volume={127},
  number={1},
  pages={49--92},
  year={1988},
  publisher={JSTOR}
}

@book{persson1977degenerations,
  title={On degenerations of algebraic surfaces},
  author={Persson, Ulf},
  volume={189},
  year={1977},
  publisher={American Mathematical Soc.}
}

@article{artin1968solutions,
  title={On the solutions of analytic equations},
  author={Artin, Michael},
  journal={Inventiones mathematicae},
  volume={5},
  number={4},
  pages={277--291},
  year={1968},
  publisher={Springer}
}

@article{pourcin1969theoreme,
  title={Th{\'e}or{\`e}me de Douady au-dessus de $ S$},
  author={Pourcin, Genevi{\`e}ve},
  journal={Annali della Scuola Normale Superiore di Pisa-Scienze Fisiche e Matematiche},
  volume={23},
  number={3},
  pages={451--459},
  year={1969}
}

@article{schlessinger1968functors,
  title={Functors of Artin rings},
  author={Schlessinger, Michael},
  journal={Transactions of the American Mathematical Society},
  volume={130},
  number={2},
  pages={208--222},
  year={1968},
  publisher={JSTOR}
}

@article{spicer2025moduli,
  title={On moduli of foliated surfaces},
  author={Spicer, Calum and Svaldi, Roberto and Velazquez, Sebastian},
  journal={arXiv preprint arXiv:2511.13491},
  year={2025}
}

@article{calvo1994irreducible,
  title={Irreducible components of the space of holomorphic foliations},
  author={Calvo-Andrade, Omegar},
  journal={Mathematische Annalen},
  volume={299},
  number={1},
  pages={751--767},
  year={1994},
  publisher={Springer}
}

@article{ferrand2003conducteur,
  title={Conducteur, descente et pincement},
  author={Ferrand, Daniel},
  journal={Bulletin de la Société Mathématique de France},
  volume={131},
  number={4},
  pages={553--585},
  year={2003}
}
